\documentclass[11pt]{amsart}

\usepackage[utf8]{inputenc}
\usepackage[margin=3cm]{geometry}

\usepackage[dvipsnames]{xcolor}

\usepackage[numbers]{natbib}
\usepackage{amssymb,amsmath,amsthm,amsfonts, graphicx,geometry,xcolor,bm,mathtools,subfigure,multirow,tensor,mathabx,tikz,algorithm,algpseudocode,xfrac,fancyhdr, blkarray}
\usepackage{tikz}
\usetikzlibrary{arrows.meta}
\usetikzlibrary{calc}
\usepackage[colorlinks=true,allcolors=Blue]{hyperref}
\usepackage{sansmath}
\usepackage{pgfplots}
\pgfplotsset{compat=1.18}
\usepgfplotslibrary{groupplots}
\definecolor{sbiccol}{HTML}{D55E00}
\definecolor{biccol}{HTML}{0072B2}
\definecolor{ngonecol}{HTML}{000000}
\definecolor{ngtwocol}{HTML}{E69F00}
\definecolor{gcvcol}{HTML}{009E73}

\newtheorem{defn0}{Definition}[section]
\newtheorem{prop0}[defn0]{Proposition}
\newtheorem{thm0}[defn0]{Theorem}
\newtheorem{lemma0}[defn0]{Lemma}
\newtheorem{corollary0}[defn0]{Corollary}
\newtheorem{example0}[defn0]{Example}
\newtheorem{remark0}[defn0]{Remark}
\newtheorem{conjecture0}[defn0]{Conjecture}
\newtheorem{code0}[defn0]{Code}
\newtheorem{fact0}[defn0]{Fact}

\newenvironment{definition}{\medskip \begin{defn0}}{\end{defn0}}

\newenvironment{theorem}{\medskip \begin{thm0}}{\end{thm0}}
\newenvironment{lemma}{\medskip \begin{lemma0}}{\end{lemma0}}
\newenvironment{corollary}{\medskip \begin{corollary0}}{\end{corollary0}}
\newenvironment{example}{\medskip \begin{example0}\rm}{\end{example0}}
\newenvironment{remark}{ \medskip\begin{remark0}\rm}{\end{remark0}}

\newenvironment{fact}{\medskip\begin{fact0}}{\end{fact0}}

\newcommand{\jac}{\operatorname{Jac}}
\newcommand{\tr}{\operatorname{tr}}

\newcommand{\rlct}{\operatorname{RLCT}}

\DeclareMathOperator*{\argmax}{argmax}

\newcommand{\bl}[1]{{\mathbf #1}}
\newcommand{\bs}[1]{\boldsymbol #1}
\newcommand{\mb}[1]{\mathbb #1}
\newcommand{\mc}[1]{\mathcal #1}
\newcommand{\mf}[1]{\mathfrak #1}

\DeclareMathOperator{\diag}{diag}
\newcommand{\R}{\mathbb{R}}

\newcommand{\Z}{\mathbb{Z}}

\numberwithin{equation}{section}

\title[Model Selection in Probabilistic Principal Component Analysis]{Asymptotics for Model Selection in Probabilistic Principal Component Analysis}
\author[M. Drton]{Mathias Drton}
\address{Department of Mathematics, TUM School of Computation, Information and Technology, Technical University of Munich and Munich Center for Machine Learning, Germany}
\email{mathias.drton@tum.de}
\author[A. McCormack]{Andrew McCormack}
\address{Department of Mathematical and Statistical Sciences, University of Alberta, Canada}
\email{mccorma2@ualberta.ca}
\author[D. Windisch]{Daniel Windisch}
\address{Department of Computer Science, KU Leuven, Belgium}
\email{daniel.windisch.math@gmail.com}

\begin{document}

\begin{abstract}
The probabilistic formulation of principal component analysis promises statistically grounded solutions to the problem of selecting the number of principal components. However, developing tractable model selection methods is complicated by the fact that the probabilistic principal component analysis (PPCA) model exhibits non-standard large sample asymptotics at model singularities, where the Fisher information matrix does not have full rank. In this work, tools from singular learning theory are used to provide a complete description of the marginal likelihood asymptotics for PPCA. The singular Bayesian information criterion (sBIC) along with our asymptotic results provides an effective procedure for selecting the number of principal components. In particular, the sBIC corrects for the overpenalization of the standard dimension-based BIC, which leads to too few principal components being selected, while also selecting the smallest true model with probability converging to one. Our framework extends beyond PPCA, where we also provide expressions for the sBIC that can be used to select between PPCA and factor analysis models. In both simulations and on real data the effectiveness of the proposed sBIC methodology is demonstrated. 
\end{abstract}

\keywords{Bayesian information criterion; factor analysis; model selection; probabilistic principal component analysis; real log canonical threshold; singular learning theory.}

\maketitle

\section{Introduction}
\label{sec:Introduction}

 Probabilistic principal component analysis (PPCA) \citep{AndersonRubinFactorAnalysis}, popularized in \cite{PPCATippingBishop}, also at times referred to as a spiked covariance model \citep{JohnstoneSpiked}, is a model-based framework for performing principal component analysis. Under simplifying mean-zero and Gaussianity assumptions, the PPCA model with $r$ principal components (PCs) posits that each $p$-dimensional observation $\bl{X}_i$ is a noisy linear function of a latent $r$-dimensional random vector $\bl{Z}_i$,
 \begin{align}
 \label{eqn:PPCALatent}
     \bl{X}_i = \bl{W}\bl{Z}_i + \bs{\varepsilon}_i, \qquad \bl{Z}_i \sim \mc{N}_r(\bl{0},\bl{I}_r), \quad \bs{\varepsilon}_i \sim \mc{N}_p(\bl{0},\sigma^2\bl{I}_p),
 \end{align}
 for some matrix $\bl{W} \in \mb{R}^{p \times r}$ and $\sigma^2 > 0$, where $\bl{Z}_i$ and the noise vector $\bs{\varepsilon}_i$ are independent. Marginalizing over the latent vector, the data form an i.i.d.~sample of $p$-dimensional random vectors $\bl{X}_1,\dots,\bl{X}_n$ with
 \begin{align}
 \label{eqn:PPCAModel}
     \bl{X}_i \sim \mc{N}_p(\bl{0}, \bl{W}\bl{W}^\intercal + \sigma^2 \bl{I}_p).
 \end{align}
 The connection between PCA and model \eqref{eqn:PPCAModel} is that the eigenvalues of $\bl{W}\bl{W}^\intercal + \sigma^2 \bl{I}_p$ are $(d_1^2+\sigma^2,\ldots,d_r^2 + \sigma^2,\sigma^2,\ldots,\sigma^2)$, where the $d_i$ are the singular values of $\bl{W}$. Hence, this covariance structure implies that there are $r$ orthogonal directions, given by the left-singular vectors of $\bl{W}$, along which the $\bl{X}_i$ are highly variable. The eigenstructure of sample covariance matrices generated under the PPCA model is an important object of study in random matrix theory \citep{RMTBaiSilverstein}. In practice, it is of interest to estimate the directions of high variability as well as to determine the number of such directions, $r$.

The selection of the number of PCs is a problem that has garnered significant attention \citep[Chap.~6]{JolliffePCABook}. Often practitioners resort to \textit{ad hoc} methods involving the inspection of sample eigenvalue plots, possibly due to the expediency sought when using PCA for data pre-processing. Minka \citep{minkaPPCA2000automatic} proposed using the PPCA model as a scaffold for selecting the number of PCs via Bayesian model selection; further Bayesian treatments can be found in \citep{bishop1998bayesianPCA,BouveyronBayesianPCA,sobczyk2017bayesianPPCA,BayesianPCAQuinn}. While a full Bayesian analysis using Monte Carlo simulation is certainly viable \citep{HoffFullBayesSVD}, it has the downside of 
being computationally involved for the practitioner who seeks a swift answer to their dimensionality reduction problem. To circumvent the need for Monte Carlo methods, Minka \citep{minkaPPCA2000automatic} advanced a Laplace approximation to the marginal likelihood of the PPCA model. Such Laplace approximations are also the basis for Schwarz's Bayesian Information Criterion (BIC) \citep{SchwarzBIC}. A subtle point is that the validity of a Laplace approximation requires a statistical model to be identifiably parameterized with a positive definite Fisher information matrix. However, as discussed in Section \ref{sec:PPCAGeometry}, the PPCA model possesses singularities that cause the requisite regularity conditions for a Laplace approximation to be violated, a fact that has not been accounted for in previous works. Despite this, using techniques from random matrix theory, it was shown in \citep{BaiAICBICConsistency} that as long as the gap between the $r$th largest and $p-r$ smallest eigenvalues is sufficiently large, both the Akaike information criterion (AIC) \citep{akaikeAIC} and BIC consistently estimate the true number of PCs in high-dimensional settings with a growing dimension $p$ and a bounded number of PCs.

The central goal of this work is to obtain asymptotic approximations to marginal likelihood integrals in the PPCA model for fixed-$p$, large-$n$ asymptotic regimes. Procedures for dealing with models possessing singularities have been introduced in Watanabe's seminal work on singular learning theory \citep{WatanabeSingularLearningBook,WatanabeBayesianStatisticsBook}. At a high level, singular learning theory 
analyzes a Bayesian marginal likelihood integral by localizing the domain of integration to a neighborhood of the set of parameters that can represent the true data-generating distribution, decomposing this neighborhood into smaller local neighborhoods of possibly different geometric types, and performing a non-trivial change of variables to each local neighborhood. The overall process is termed a resolution of singularities by algebraic geometers \citep[Chap.~2]{WatanabeSingularLearningBook}. 
The resolution reveals that even for models that lack identifiability and have a Fisher information matrix that drops rank, marginal likelihood integrals may be brought into a standard form
that reveals that the asymptotic scaling of the overall integral is entirely determined by two parameters, the \textit{learning coefficient} $\lambda$, and its \textit{multiplicity}, $\mf{m}$. In the case of sufficiently regular models, the BIC captures precisely this asymptotic information, with a learning coefficient that equals one half of the dimension of the parameter space and has multiplicity one. The pair $(\lambda,\mf{m})$ has been fully or partially determined for a growing number of pertinent statistical models, including reduced-rank regression \citep{AoyagiReducedRankRegression}, neural networks \citep{AoyagiNeuralNet,AoyagiMultiLayerLinear2024}, mixture models \citep{RousseauMixture,MixtureModelyamazaki2003singularities,AoyagiVandermondeModelSelection2019}, latent tree models \citep{DrtonZwiernikatentTree}, and factor analysis models \citep{drton2025Factor}. We note that at present there are only a few models, namely, univariate Gaussian mixture models with a fixed variance, the reduced-rank regression model, and the factor analysis model, for which the learning coefficients are completely characterized. The present work provides a complete characterization of the learning coefficients and their multiplicities for PPCA. Furthermore, we introduce, and determine the learning coefficients for, a more general class of partitioned noise PPCA models that includes both the PPCA and factor analysis models as special cases. Using the derived learning coefficients, we develop principled model selection procedures, not just for determining the number of PCs in PPCA, but also for selecting \textit{between} PPCA and factor analysis models.

Mere mathematical derivation of the values of $(\lambda,\mf{m})$ is generally not enough for statistical model selection, as the values may depend on characteristics of the unknown data-generating distribution. Thus, they need to be processed in a practically feasible model selection procedure. The singular Bayesian information criterion (sBIC) \citep{sBICDrtonPlummer} was proposed as a way to utilize the learning coefficients for model selection while mitigating the issue that their values may depend on the unknown true model. Adopting this framework, we use the PPCA learning coefficients for model selection. In PCA and related settings, the BIC is well known to penalize complex models heavily, often resulting in overly simple models \citep{BouveyronBayesianPCA,GICHung}. The sBIC imposes a smaller penalty and is more sensitive in detecting signals in the eigenvalues. We demonstrate empirically that the sBIC convincingly outperforms the BIC in the PCA context.

An outline of this paper is as follows:
Section \ref{sec:PPCAGeometry} describes the geometry of the PPCA model and highlights the structure of the singularities that are present. A concise review of the sBIC is provided in Section \ref{sec:BackgroundSLTandsBIC}. Section \ref{sec:RLCTTheory} outlines techniques for obtaining the learning coefficients and multiplicities. The core theoretical results in this paper appear in that section, where the learning coefficients and multiplicities for PPCA are derived. These results are expanded upon in Section \ref{sec:UnifyPPCAandFA}, where the notion of a partitioned noise PPCA model is introduced. This class of models provides a framework for performing model selection between PPCA and factor analysis models. Section \ref{sec:Simulations} provides evidence from simulations for the validity of our asymptotic theory, and contrasts the performance of the sBIC with other model selection techniques for PPCA. This is complemented by two applications to real data.

\section{Geometry of the PPCA Model}
\label{sec:PPCAGeometry}

As a centered normal distribution is determined by its covariance matrix, we identify each model with its set of covariance matrices.
\begin{definition}\label{def:PPCAModel}
The PPCA model from \eqref{eqn:PPCAModel} with a candidate number of $k$ principal components (PCs) is the covariance model
\begin{align*}
    \mc{M}_k \coloneqq \{ \bl{W}\bl{W}^\intercal + \sigma^2 \bl{I}_p: \bl{W} \in \mb{R}^{p \times k}, \;\sigma^2 > 0\}.
\end{align*}
\end{definition}
Varying $k$, the PPCA models are nested by inclusion as
\begin{align}
   \label{eqn:NestednessofPPCA} \{\sigma^2\bl{I}_p:\sigma^2 > 0\} = \mc{M}_0 \subsetneq \mc{M}_1 \subsetneq \cdots \subsetneq \mc{M}_{p-2} \subsetneq \mc{M}_{p-1} = \mc{S}_{++}^p,
\end{align}
where $\mc{S}_{++}^p$ is the (open) cone of positive definite matrices. It is convenient to also set $\mc{M}_{-1} \coloneqq \emptyset$, so that the set difference $\mc{M}_k \setminus \mc{M}_{k-1}$ is defined for every $0 \leq k \leq p-1$.
Given a sample from $\mc{N}_p(\bl{0},\bs{\Sigma})$,
the model selection problem for PPCA is finding the smallest value of $k$ such that $\bs{\Sigma} \in \mc{M}_k$. 

In our context, singularities of a statistical model are defined with reference to a given parameterization. A model is singular if the Fisher information matrix in the coordinates of the parameterization does not always have full rank. We consider the PPCA model as being parameterized by the map $\varphi_k:\mb{R}^{p \times k}\times\mb{R}_{> 0} \rightarrow \mc{S}_{++}^p$ with $\varphi_k(\bl{W},\sigma^2) \coloneqq \bl{W}\bl{W}^\intercal + \sigma^2 \bl{I}_{p}$. The Fisher information matrix does not have full rank in this parameterization because the parameterization is not identifiable:  For $\bl{U} \in \text{O}(k)$, the group of orthogonal $k\times k$ matrices, $\bl{W}\bl{U}(\bl{W}\bl{U})^\intercal = \bl{W}\bl{W}^\intercal$. To deal with this lack of identifiability, the $p \times k$ matrices $\bl{W}$ could alternatively be parameterized by the equivalence classes of matrices $\mb{R}^{p \times k}/\text{O}(k)$, where two matrices $\bl{W}^{(1)}$ and $\bl{W}^{(2)}$ are equivalent in $\mb{R}^{p \times k}/\text{O}(k)$ if and only if there exists a $\bl{U} \in \text{O}(k)$ with $\bl{W}^{(1)} \bl{U} = \bl{W}^{(2)}$. However, even after this modification, important singularities persist because the quotient space $\mb{R}^{p \times k}/\text{O}(k)$ does not have the structure of a smooth manifold. Indeed, singularities can be attributed to matrices $\bl{W}$ that do not have full rank. Only when restricting to $\mb{R}^{p \times k}_*$, the set of matrices in $\mb{R}^{p \times k}$ that have full column rank $k$, we obtain a quotient space $\mb{R}^{p \times k}_* / \text{O}(k)$ that is a smooth manifold; see, e.g., \citep{FixedRankPDManifoldGeomvandereycken}. 

A more direct way to assess the presence of singularities under the $\varphi_k$ parameterization is to consider the rank of the Jacobian matrix $J_{\varphi_k}(\bl{W},\sigma^2)$ of $\varphi_k$ at $(\bl{W},\sigma^2) \in \mb{R}^{p \times k} \times \mb{R}_{> 0}$. The image of $J_{\varphi_k}(\bl{W},\sigma^2)$ is given by the following tangent space at $\bs{\Sigma} = \bl{W}\bl{W}^\intercal + \sigma^2 \bl{I}_p$:
\begin{align*}
  \text{span}\bigg(\{\bl{W}\bl{V}^\intercal + \bl{V}\bl{W}^\intercal : \bl{V} \in \mb{R}^{p \times k} \} \cup \{\bl{I}_p\} \bigg),
\end{align*}
which has dimension $r(p-r) + r(r+1)/2 + 1$ when $\text{rank}(\bl{W}) = r$ and $0 \leq k \leq p-1$, as shown in Lemma \ref{lem:ModelDimension} in the Appendix. 
Hence, $J_{\varphi_k}(\bl{W},\sigma^2)$ fails to achieve its maximal column rank at a pair $(\bl{W},\sigma^2)$ precisely when $\text{rank}(\bl{W}) < k$.  Notice that the set of matrices $\bs{\Sigma} \in \mc{M}_k$ associated to pairs $(\bl{W},\sigma^2)$ with $\text{rank}(\bl{W}) < k$ is precisely equal to $\mc{M}_{k-1}$. Therefore, \textit{the singular points in $\mc{M}_k$ are precisely the covariance matrices that also lie in the smaller PPCA model $\mc{M}_{k-1}$}.
The behavior of $\mc{M}_k$ at its singularities is thus important for model selection.

\begin{example}
Consider the case $p = 2$ and $k = 1$, where $\bl{W}=(w_{11},w_{21})^\intercal$ is a vector. The parameterization map and its Jacobian matrix (accounting for symmetry) are 
\begin{align*}
    \varphi_1(\bl{W},\sigma^2) = \begin{bmatrix}
        w_{11}^2 + \sigma^2 & w_{11}w_{21}
        \\
        w_{11}w_{21} & w_{21}^2 + \sigma^2
    \end{bmatrix}, \quad J_{\varphi_1}(\bl{W},\sigma^2) = \begin{bmatrix}
        2w_{11} & 0 & w_{21}
        \\
        0 & 2w_{21} & w_{11}
        \\
        1 & 1 & 0
    \end{bmatrix}.
\end{align*}
It is seen that $\text{rank}(J_{\varphi_1}(\bl{W},\sigma^2)) = 3$ if and only if $\det(J_{\varphi_1}(\bl{W},\sigma^2))=-2(w_{11}^2+w_{21}^2)\neq 0$ if and only if $\bl{W} \neq \bl{0}$. The Fisher information matrix $\mc{I}(w_{11},w_{21},\sigma^2)$ will thus have rank $3$ if $\bl{W} \neq \bl{0}$ and rank $1$ otherwise. When $p = 2$, the set of covariance matrices $\mc{M}_1$ equals $\mc{S}^p_{++}$, which is a smooth manifold. This illustrates that the singularities at $\bl{W} = \bl{0}$ are not intrinsic to the model $\mc{M}_1$ itself, but rather arise from the chosen standard parameterization.        
\end{example}

By removing the set of singular points $\mc{M}_{k-1}$ from $\mc{M}_k$, it is proven in Lemma \ref{lem:NonSingularPointsareManifold} that the set $\mc{M}_k \setminus \mc{M}_{k-1}$ \textit{can} be given the structure of a smooth manifold that is compatible with the $\varphi_k$ parameterization. The differential geometry of spaces of positive definite matrices with equality constrained eigenvalues is discussed at length in \citep{GroisserStratifiedPD}. In particular, these spaces are stratified spaces, roughly meaning that they consist of a collection of smooth manifolds that are glued together in a systematic fashion. Recalling from the introduction that a covariance matrix $\bs{\Sigma}$ is in $\mc{M}_k$ if and only if its eigenvalues have the form $\gamma_1 \geq \cdots \geq \gamma_k \geq \gamma_{k+1} = \cdots = \gamma_p$, we see that the matrices that are not singularities in $\mc{M}_k$ have eigenstructures of the form $\gamma_1 \geq \cdots \geq \gamma_k > \gamma_{k+1} = \cdots = \gamma_p$, where the $k$th largest eigenvalue $\gamma_{k}$ must be larger than $\gamma_{k+1}$. Conversely, a matrix $\bs{\Sigma} \in \mc{M}_k$ is a singular point if and only if $\gamma_k = \gamma_{k+1}$.

To conclude our discussion of the geometry of $\mc{M}_k$, we mention that the PPCA models have a rich group transformation structure. As membership in $\mc{M}_k\setminus \mc{M}_{k-1}$ is characterized by the eigenstructure of a positive definite matrix, it is invariant under orthogonal transformations, because eigenvalues are invariant under orthogonal transformations. Specifically, $\bs{\Sigma} \in \mc{M}_k \setminus \mc{M}_{k-1}$ implies $\bl{U}\bs{\Sigma}\bl{U}^\intercal \in  \mc{M}_k \setminus \mc{M}_{k-1}$ for all orthogonal matrices $\bl{U}\in\text{O}(p)$. This group structure should be contrasted with symmetries of the related factor analysis model \citep{AlgFactorAnalysisDrton}, which consists of positive definite matrices of the form 
\begin{align*}
  \mc{F}_k \coloneqq  \{ \bl{W}\bl{W}^\intercal + \bl{D}: \bl{W} \in \mb{R}^{p \times k}, \;\; \bl{D} = \diag(d_1,\ldots,d_p),\; d_i > 0\}.
\end{align*}
It is no longer the case that $\bl{U}\bs{\Sigma}\bl{U}^\intercal$ is in $\mc{F}_k\setminus \mc{F}_{k-1}$ whenever $\bs{\Sigma} \in \mc{F}_k \setminus \mc{F}_{k-1}$ and $\bl{U}\in\text{O}(p)$. The factor analysis model instead has symmetries consisting of diagonal matrices, as $\bl{A}\bs{\Sigma}\bl{A}^\intercal \in \mc{F}_k \setminus \mc{F}_{k-1}$ whenever $\bs{\Sigma} \in \mc{F}_k \setminus \mc{F}_{k-1}$ and $\bl{A}$ is nonsingular and diagonal. The PPCA models do not possess these diagonal symmetries, but are scale invariant as $\bs{\Sigma} \in \mc{M}_k \setminus \mc{M}_{k-1}$ implies $a^2 \bs{\Sigma} \in \mc{M}_k \setminus \mc{M}_{k-1}$ for $a\in\mb{R}$. The orthogonal and scale invariance of the PPCA models will play an important role when we determine the PPCA learning coefficients in Section \ref{sec:RLCTTheory}.

\section{Singular Learning Theory and the sBIC}
\label{sec:BackgroundSLTandsBIC}

Bayesian model selection for PPCA begins with prior distributions being placed on the parameters of each of the models $\mc{M}_k$, $k = 0,\ldots,p-1$. We assume that for each $k$, the prior distribution has a Lebesgue density $\phi_k(\bl{W},\sigma^2)$ that is \emph{smooth} and \emph{everywhere positive}, but otherwise arbitrary. Given data $\bl{X} = (\bl{X}_1,\ldots,\bl{X}_n)$, where the $\bl{X}_i$ are i.i.d.~$\mc{N}_p(\bl{0},\bs{\Sigma}_0)$, the fit of a model $\mc{M}_k$ is assessed by its marginal likelihood
\begin{align}
\label{eqn:MarginalLikelihoodforPPCA}
    p(\bl{X}|\mc{M}_k) = \int_{\mb{R}^{p \times k}}\int_{0}^\infty \prod_{i = 1}^n p(\bl{X}_i|\bl{W},\sigma^2,\mc{M}_k) \phi_k(\bl{W},\sigma^2)\;\mathrm{d}\sigma^2\,\mathrm{d}\bl{W},
\end{align}
where $p(\bl{X}_i|\bl{W},\sigma^2,\mc{M}_k)$ is the density of $\mc{N}_p(\bl{0},\bl{W}\bl{W}^\intercal + \sigma^2 \bl{I}_{p})$ at $\bl{X}_i$.
In the sequel, we write $r$ for the minimal number of principal components needed to represent the true covariance matrix $\bs{\Sigma}_0$ in a PPCA model.  In other words, $r=r(\bs{\Sigma}_0)$ is the unique number in $\{0,1,\ldots,p-1\}$ with
\begin{equation}
\label{eq:rSigma0}
\bs{\Sigma}_0 \in \mc{M}_r \setminus \mc{M}_{r-1}.
\end{equation}

Hallmark results of singular learning theory describe the asymptotic behavior of the marginal likelihood (Corollary 6.1 of Watanabe \citep{WatanabeSingularLearningBook}). Applied to our context, we may conclude that when $r \leq k$, such that the posited model $\mc{M}_k$ contains the true underlying $\bs{\Sigma}_0$, the log-marginal likelihood satisfies the following asymptotics as $n \rightarrow \infty$:
\begin{equation}
\begin{aligned}
\label{eqn:LogMargLikeAsymptotics}
    \log\big( p(\bl{X}|\mc{M}_k)\big) = &\log\big(p(\bl{X}|\hat{\bl{W}},\hat{\sigma}^2,\mc{M}_k)\big)\\
    &- \lambda_k(\bs{\Sigma}_0)\log(n) + (\mf{m}_k(\bs{\Sigma}_0) - 1)\log(\log(n)) + O_p(1).
\end{aligned}
\end{equation}
Here $(\hat{\bl{W}},\hat{\sigma}^2) \in \mb{R}^{p \times k} \times \mb{R}_{> 0}$ are the maximum likelihood estimates of the parameters under model $\mc{M}_k$. The first element of the pair $(\lambda_k(\bs{\Sigma}_0),\mf{m}_k(\bs{\Sigma}_0))$ is referred to as the \textit{learning coefficient} and the second element is its \textit{multiplicity}. 
Our later results show that this pair only depends on $\bs{\Sigma}_0$ through the integer $r$ defined in \eqref{eq:rSigma0}, and we denote it by $(\lambda_{kr},\mf{m}_{kr})$.

For models that are regularly parameterized, the learning coefficient equals half the dimension of the model and the multiplicity is one, which is the fact that yields the BIC \citep{SchwarzBIC}. As pointed out in \citep{sBICDrtonPlummer}, when $(\lambda_k(\bs{\Sigma}_0),\mf{m}_k(\bs{\Sigma}_0))$ is not constant and depends on the unknown $\bs{\Sigma}_0$ the expansion \eqref{eqn:LogMargLikeAsymptotics} cannot directly be used to form a model selection criterion. The sBIC was proposed as a methodology that uses knowledge of the values of $(\lambda_{kr},\mf{m}_{kr})$ to construct a model selection criterion that is both consistent and equal to the true marginal likelihood up to an error term of order $O_p(1)$ \citep{sBICDrtonPlummer}.

Assuming equal prior probability $p(\mc{M}_k) = \tfrac{1}{p}$ for each model, the sBIC is constructed iteratively, starting at the smallest model $\mc{M}_0$ and initializing a marginal likelihood proxy as
\begin{align}
\label{eqn:sBICL0Initialize}
    L_{0} = p(\bl{X}|\hat{\bl{W}},\hat{\sigma}^2,\mc{M}_0) \frac{\log(n)^{\mf{m}_{00}-1}}{n^{\lambda_{00}}}.
\end{align}
One then recursively computes further marginal likelihood proxies for $k=1,\dots,p-1$ as
\begin{align}
\label{eqn:sBICLi}
    L_k & = \tfrac{1}{2}\big(-b_k + \sqrt{b_k^2 + 4c_k}\big),\\
    \intertext{where}
\label{eqn:sBICbi}
b_k & = -p(\bl{X}|\hat{\bl{W}},\hat{\sigma}^2,\mc{M}_k) \frac{\log(n)^{\mf{m}_{kk}-1}}{n^{\lambda_{kk}}} + \sum_{j \prec k} L_j,
\\
\label{eqn:sBICci}
c_k & = \sum_{j \prec k} p(\bl{X}|\hat{\bl{W}},\hat{\sigma}^2,\mc{M}_k) \frac{\log(n)^{\mf{m}_{kj}-1}}{n^{\lambda_{kj}}} L_j.
\end{align}
The sBIC for model $\mc{M}_k$ is defined as $\log(L_k)$; for numerical stability, in practice the calculation should be carried out on the log scale. The resulting model selection criterion amounts to choosing the model with the largest sBIC. 

The calculation of sBIC for PPCA is straightforward due to the simple form of the maximum likelihood estimators (MLE) in $\mc{M}_k$ \citep{PPCATippingBishop}. 
\begin{lemma}
\label{PPCA:MLE}
Assume the observations to have zero mean.  Form the sample covariance matrix $\tfrac{1}{n}\bl{X}\bl{X}^\intercal$ and its  eigendecomposition $\tfrac{1}{n}\bl{X}\bl{X}^\intercal = \bl{U}\bl{D}\bl{U}^\intercal$ with $\bl{D}=\diag(d_1,\dots,d_p)$, where $d_1 \geq \cdots \geq d_p$, and $\bl{U}\in\text{O}(p)$. Then the MLE of the covariance matrix under the PPCA model with $k$ PCs is equal to 
\begin{equation*}
\hat{\bs{\Sigma}} = \bl{U}\bl{D}_*\bl{U}^\intercal, \quad \text{where} \ \bl{D}_* = \diag(d_1,\ldots,d_k, \textstyle\tfrac{1}{p-k}\sum_{i = k+1}^p d_i, \ldots, \tfrac{1}{p-k}\sum_{i = k+1}^p d_i).
\end{equation*}
\end{lemma}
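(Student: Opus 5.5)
The plan is to reduce the maximization of the Gaussian log-likelihood over $\mc{M}_k$ to an optimization over eigenvalues. With $\bl{S}=\tfrac{1}{n}\bl{X}\bl{X}^\intercal$, the log-likelihood is, up to constants,
\begin{align*}
\ell(\bs{\Sigma}) = -\tfrac{n}{2}\big(\log\det\bs{\Sigma} + \tr(\bs{\Sigma}^{-1}\bl{S})\big),
\end{align*}
so we must minimize $f(\bs{\Sigma})=\log\det\bs{\Sigma}+\tr(\bs{\Sigma}^{-1}\bl{S})$ over $\bs{\Sigma}\in\mc{M}_k$. By Section \ref{sec:PPCAGeometry}, $\mc{M}_k$ is the set of positive definite matrices $\bs{\Sigma}=\bl{V}\bs{\Gamma}\bl{V}^\intercal$ with $\bl{V}\in\text{O}(p)$ and $\bs{\Gamma}=\diag(\gamma_1,\dots,\gamma_p)$, where $\gamma_1\ge\cdots\ge\gamma_k\ge\gamma_{k+1}=\cdots=\gamma_p>0$. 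We optimize first over $\bl{V}$ with $\bs{\Gamma}$ fixed, and then over $\bs{\Gamma}$.

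\textbf{Step 1: the eigenvectors.} For fixed $\bs{\Gamma}$, the term $\log\det\bs{\Sigma}$ does not depend on $\bl{V}$. We therefore minimize $\tr(\bl{V}\bs{\Gamma}^{-1}\bl{V}^\intercal\bl{S})$ over $\bl{V}$. The eigenvalues of $\bs{\Gamma}^{-1}$ are increasingly ordered, $\gamma_1^{-1}\le\cdots\le\gamma_p^{-1}$, while those of $\bl{S}$ are decreasingly ordered. By von Neumann's trace inequality (equivalently, the rearrangement inequality for the Schur–Horn type bound $\tr(\bl{A}\bl{B})\ge\sum_i a_{(i)}b_{(p-i+1)}$), we get $\tr(\bs{\Sigma}^{-1}\bl{S})\ge\sum_i d_i/\gamma_i$. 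Equality holds at $\bl{V}=\bl{U}$. This is the step where the main care is needed: the inequality must be stated with the correct opposite orderings, and it must be checked that equality is attainable inside $\mc{M}_k$. Attainability is clear, since $\mc{M}_k$ is invariant under orthogonal conjugation.

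\textbf{Step 2: the eigenvalues.} It remains to minimize
\begin{align*}
g(\gamma,\tau)=\sum_{i=1}^k\Big(\log\gamma_i+\frac{d_i}{\gamma_i}\Big)+(p-k)\log\tau+\frac{1}{\tau}\sum_{i=k+1}^p d_i
\end{align*}
over $\gamma_1\ge\cdots\ge\gamma_k\ge\tau>0$. First drop the ordering constraints. The objective then separates into one-dimensional problems, each of the form $x\mapsto a\log x+b/x$. Such a function is minimized at $x=b/a$ and is strictly decreasing to the left of that point and increasing to the right. This gives $\gamma_i=d_i$ for $i\le k$ and $\tau=\tfrac{1}{p-k}\sum_{i>k}d_i$.

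This unconstrained minimizer satisfies the ordering, because $d_k\ge d_{k+1}\ge$ the average of $d_{k+1},\dots,d_p$. Hence it is also the constrained minimizer, and it yields $\hat{\bs{\Sigma}}=\bl{U}\bl{D}_*\bl{U}^\intercal$. Positivity is needed here: if $\bl{S}$ is positive definite, which holds almost surely when $n\ge p$, then $\tau>0$ and $\hat{\bs{\Sigma}}$ lies in $\mc{M}_k$.

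\textbf{Step 3: the degenerate case.} If some $d_i$ vanish, we should note that the supremum is still attained whenever $\tau>0$. The argument handles this by the same one-dimensional analysis, using the convention that a minimizer with $\gamma_i=d_i=0$ is excluded only if $i\le k$. I would remark that a positive definite $\bl{S}$ is assumed, so that this case does not arise.

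Finally, any $\hat{\bs{\Sigma}}$ of this form can be written as $\hat{\bl{W}}\hat{\bl{W}}^\intercal+\hat\sigma^2\bl{I}_p$. Take $\hat\sigma^2=\tau$ and let $\hat{\bl{W}}$ be the first $k$ columns of $\bl{U}$ scaled by $(d_i-\tau)^{1/2}$. This confirms that the maximizer corresponds to a valid parameter pair.
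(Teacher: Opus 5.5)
Your proof is correct. It takes a different route from the one the paper relies on. The paper gives no proof of Lemma~\ref{PPCA:MLE} and cites Tipping and Bishop, who work directly in the $(\bl{W},\sigma^2)$ coordinates:
\begin{itemize}
\item they set the gradient of the log-likelihood in $\bl{W}$ to zero, giving $\bl{S}\bs{\Sigma}^{-1}\bl{W}=\bl{W}$;
\item they use an SVD of $\bl{W}$ to show that every stationary point has the form $\bl{W}=\bl{U}_k(\bs{\Lambda}_k-\sigma^2\bl{I}_k)^{1/2}\bl{R}$, with $\bl{U}_k$ consisting of some $k$ eigenvectors of $\bl{S}$ and $\bl{R}$ orthogonal;
\item they solve for $\sigma^2$ as the average of the discarded eigenvalues;
\item they then argue that retaining the $k$ largest eigenvalues gives the global maximum.
\end{itemize}

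You instead use the spectral description of $\mc{M}_k$ from Section~\ref{sec:PPCAGeometry}. Von Neumann's trace inequality, with the orderings correctly opposed, fixes the eigenvectors. The problem then reduces to separable one-dimensional problems whose unconstrained minimizer already satisfies $\gamma_1\ge\cdots\ge\gamma_k\ge\tau$. What your route gains is a short certificate of global optimality: a chain of inequalities with equality at $\bl{U}\bl{D}_*\bl{U}^\intercal$, with no need to classify critical points or exclude spurious local maxima. What it gives up is the description of the full critical set in $\bl{W}$-space, which matters for EM or gradient methods. It also recovers $\hat{\bl{W}}$ only afterwards, up to the $\mathrm{O}(k)$ ambiguity, as in your last paragraph.

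Your Step~3 is garbled, though it is not needed for the lemma. The clean statement is that a maximizer exists if and only if $\tau=\tfrac{1}{p-k}\sum_{i>k}d_i>0$, i.e.\ $d_{k+1}>0$.
\begin{itemize}
\item If $\tau>0$, then $d_1\ge\cdots\ge d_{k+1}>0$, so any zero eigenvalues have index greater than $k+1$ and enter only through the average.
\item If $\tau=0$, then letting the zero eigenvalues of $\bs{\Sigma}$ tend to $0$ sends the objective to $-\infty$.
\end{itemize}
Also, when $d_k=d_{k+1}$ the MLE is not unique. Your argument shows that $\bl{U}\bl{D}_*\bl{U}^\intercal$ is a maximizer for every choice of eigendecomposition, which is the right reading of ``the MLE'' in the statement.
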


\begin{remark}
    The above formulas for sBIC also apply to the larger class of partitioned noise PPCA models introduced in Section \ref{sec:UnifyPPCAandFA}. The only distinction is in how the sums $\sum_{j \prec k}$ in \eqref{eqn:sBICbi}-\eqref{eqn:sBICci} are interpreted. In general, $\prec$ is a partial order on the model indices $\bl{k}$ such that $\sum_{\bl{j} \prec \bl{k}}$ becomes a sum of terms associated with the submodels $\mc{M}_{\bl{j}} \subsetneq \mc{M}_{\bl{k}}$.
\end{remark}

\section{Learning coefficients of PPCA models}
\label{sec:RLCTTheory}

We now turn to the core theoretical contribution of this paper, determining the value of $(\lambda_{k}(\bs{\Sigma}_0),\mf{m}_{k}(\bs{\Sigma}_0))$ appearing in \eqref{eqn:LogMargLikeAsymptotics}. Our results will leverage algebraic methods as developed in \citep{Lin2017Ideal, WatanabeSingularLearningBook}. The reader who is less interested in the algebraic machinery may directly focus on the following statement of our main theorem.

\begin{theorem}\label{thm:formula-learning-coeff}
Let $p \geq 1$ and $0 \leq r \leq k\leq p-1$.  Let the observations have true covariance matrix $\bs{\Sigma}_0 \in \mc{M}_r\setminus \mc{M}_{r-1}$, i.e., a PPCA covariance matrix with exactly $r$ PCs. Under any smooth and everywhere positive prior density $\phi_k:\R^{p\times k} \times \R_{>0}\to[0,\infty)$, the model $\mc{M}_k$ with $k$ PCs admits the marginal likelihood expansion \eqref{eqn:LogMargLikeAsymptotics} with learning coefficient and multiplicity equal to
\begin{align}
    \label{eqn:LearningCoeffExpression}
    \lambda_k(\bs{\Sigma}_0)= \lambda_{kr} := \frac{pk + r(p-k+1) + 2}{4} \quad \text{and} \quad \mf{m}_k(\bs{\Sigma}_0)= \mf{m}_{kr} := 1.
\end{align}
\end{theorem}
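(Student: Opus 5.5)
The plan is to reduce the computation to a real log canonical threshold (RLCT) of a polynomial ideal, and then to simplify that ideal by explicit changes of coordinates until only a homogeneous piece with an isolated zero is left. \emph{Step 1.} For Gaussian models, the Kullback--Leibler divergence $K(\bl{W},\sigma^2)$ from $\mc{N}_p(\bl{0},\bs{\Sigma}_0)$ to $\mc{N}_p(\bl{0},\varphi_k(\bl{W},\sigma^2))$ is, near its zero set, bounded above and below by constant multiples of $\|\varphi_k(\bl{W},\sigma^2)-\bs{\Sigma}_0\|_F^2$. By the ideal-theoretic results of \citep{Lin2017Ideal} and Watanabe's theory, $(\lambda_k(\bs{\Sigma}_0),\mf{m}_k(\bs{\Sigma}_0))$ therefore equals the RLCT and multiplicity of the ideal generated by the entries of $\bl{W}\bl{W}^\intercal+\sigma^2\bl{I}_p-\bs{\Sigma}_0$. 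This pair is the minimum, in the usual ordering (smallest $\lambda$, then largest $\mf{m}$), of the local pairs over points of the fiber $\varphi_k^{-1}(\bs{\Sigma}_0)$. Because the prior $\phi_k$ is smooth and positive, it does not affect the result. Noncompactness of the parameter space would be handled by the standard localization argument.

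\emph{Step 2: normalize and describe the fiber.} By the orthogonal invariance of Section~\ref{sec:PPCAGeometry}, take $\bs{\Sigma}_0=\diag(\gamma_1,\dots,\gamma_r,\gamma_0,\dots,\gamma_0)$ with $\gamma_1\ge\dots\ge\gamma_r>\gamma_0>0$. Since $k\le p-1$, $\bl{W}\bl{W}^\intercal$ is singular, so on the fiber $\sigma^2=\gamma_0$. Moreover $\bl{W}\bl{W}^\intercal=\diag(\bs{\Lambda},\bl{0})$ with $\bs{\Lambda}=\diag(\gamma_i-\gamma_0)$. Up to right multiplication by $\text{O}(k)$, which preserves the ideal, every fiber point is therefore $\bl{W}_0=\left[\begin{smallmatrix}\bs{\Lambda}^{1/2}&\bl{0}\\ \bl{0}&\bl{0}\end{smallmatrix}\right]$, so a single local computation suffices.

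Near $\bl{W}_0$, write $\bl{W}=\left[\begin{smallmatrix}\bl{B}&\bl{C}\\ \bl{D}&\bl{E}\end{smallmatrix}\right]$ with $\bl{B}\in\R^{r\times r}$ invertible, and set $\tau=\sigma^2-\gamma_0$. The three blocks of the generating matrix are
\begin{align*}
\bl{B}\bl{B}^\intercal+\bl{C}\bl{C}^\intercal+\tau\bl{I}-\bs{\Lambda}, \qquad \bl{B}\bl{D}^\intercal+\bl{C}\bl{E}^\intercal, \qquad \bl{D}\bl{D}^\intercal+\bl{E}\bl{E}^\intercal+\tau\bl{I}_{p-r}.
\end{align*}
I will take the first block, a symmetric $r\times r$ matrix $\bl{S}$, and $\bl{D}'=\bl{B}\bl{D}^\intercal+\bl{C}\bl{E}^\intercal$ as new local coordinates. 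This is a valid analytic change of variables because $\bl{B}$ is invertible. The remaining $r(r-1)/2$ coordinates of $\bl{B}$ are free.

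Substituting $\bl{D}^\intercal=\bl{B}^{-1}(\bl{D}'-\bl{C}\bl{E}^\intercal)$, the third block becomes, modulo the ideal $\langle \bl{S},\bl{D}'\rangle$,
\begin{align*}
\bl{E}\big(\bl{I}+\bl{C}^\intercal(\bl{B}\bl{B}^\intercal)^{-1}\bl{C}\big)\bl{E}^\intercal+\tau\bl{I}.
\end{align*}
The middle factor is positive definite, so its square root can be absorbed into $\bl{E}$ by another analytic change of variables. The ideal thereby splits into $r(r+1)/2+r(p-r)$ independent regular coordinates, contributing $\tfrac{1}{2}$ each with multiplicity one, plus the ideal of $\bl{E}\bl{E}^\intercal+\tau\bl{I}_q$ in the separate variables $\bl{E}\in\R^{q\times m}$, $\tau$, where $q=p-r$ and $m=k-r\le q-1$. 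Since learning coefficients add over disjoint variable sets and multiplicities combine by $\mf{m}_1+\mf{m}_2-1$, it remains to show that this reduced ideal has RLCT $(qm+2)/4$ and multiplicity $1$. A direct computation confirms that $r(r+1)/4+r(p-r)/2+(qm+2)/4=(pk+r(p-k+1)+2)/4$.

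\emph{Step 3: the reduced case $r=0$.} Separate the trace from the traceless part. The ideal equals $\langle \tau+\tr(\bl{E}\bl{E}^\intercal)/q\rangle+\langle \bl{E}\bl{E}^\intercal-\tfrac{1}{q}\tr(\bl{E}\bl{E}^\intercal)\bl{I}_q\rangle$. The first generator is a regular coordinate replacing $\tau$ and contributes $\tfrac12$. The traceless part $T(\bl{E})$ is homogeneous of degree $2$. It vanishes only at $\bl{E}=\bl{0}$: if $\bl{E}\bl{E}^\intercal$ is a multiple of $\bl{I}_q$, then because $\operatorname{rank}\le m<q$ the multiple is $0$.

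Blowing up the origin, $\bl{E}=t\bl{u}$ with $\bl{u}$ on the unit sphere, gives $\|T\|^2=t^4h(\bl{u})$ with $h>0$ and Jacobian $t^{qm-1}$. The zeta function $\int t^{-4z}t^{qm-1}\,dt$ then has its largest pole at $z=qm/4$, which is simple. So this part has RLCT $qm/4$ with multiplicity $1$, and the total is $(qm+2)/4$ with multiplicity $1$. The case $m=0$ (that is, $r=k$) is consistent, since then $\bl{E}$ is empty and only $\tau$ remains, which is the regular case.

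\emph{Expected obstacle.} I expect the main difficulty to be Step 2. The change of variables must be rigorously valid as an equivalence of ideals in the ring of analytic functions near each fiber point, including the elimination of $\bl{D}$ and the absorption of the positive definite factor into $\bl{E}$. It also needs to be uniform along the noncompact $\text{O}(k)$-orbit fiber, so that the minimum over fiber points is attained with the same pair everywhere. I would handle this through the $\text{O}(k)$ invariance and compactness of $\text{O}(k)$.
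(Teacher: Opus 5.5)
Your argument is correct, and it reaches the same RLCT by a genuinely different route.

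The paper proceeds in three stages:
\begin{itemize}
\item It gauge-fixes $\bl{W}$ by a QR-type lemma, giving a zero upper-right block and a lower-triangular $\bl{W}_{11}$.
\item It removes $\bl{W}_{21}$ by the same substitution as your $\bl{D}'$. This leaves $\langle \bl{W}_{11}\bl{W}_{11}^\intercal+\sigma^2\bl{I}_r-\diag(d_1,\ldots,d_r)\rangle+\langle \bl{W}_{22}\bl{W}_{22}^\intercal+\sigma^2\bl{I}_{p-r}\rangle$, two ideals that still share the noise variable.
\item It writes the shifted noise variance as $\sigma^2$ (Jacobian $2\sigma$) and blows up $(\bl{W}_{22},\sigma)$ at the origin. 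In each chart it uses the now monomial pullback of the second ideal to strip the $\sigma$-terms from the first, and then applies the sum rule.
\end{itemize}

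You avoid both the QR lemma and the blow-up:
\begin{itemize}
\item The $\mathrm{O}(k)$-homogeneity of the fiber reduces everything to a single point.
\item Taking the whole block $\bl{S}$, which already contains $\tau$, as a coordinate decouples the noise variable at once.
\item The block $\bl{C}$ is neutralized by absorbing $(\bl{I}+\bl{C}^\intercal(\bl{B}\bl{B}^\intercal)^{-1}\bl{C})^{1/2}$ into $\bl{E}$.
\item The trace/traceless split makes the two contributions transparent: $\tfrac12$ from one regular coordinate, and $qm/4$ from a quadratic map with an isolated zero.
\end{itemize}
Your split also treats $\tau'$ as a genuine two-sided coordinate. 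By contrast, the paper's substitution only covers noise variances $\ge\gamma_0$, so the other side of the fiber needs an analogous check.

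What the paper's chart computation buys is reusability. It carries over almost verbatim to the partitioned-noise models of Theorem~\ref{thm:generalized-formula-learning-coeff}. There the noise term is not a multiple of $\bl{I}_p$, and your trace/traceless split has no direct analogue.

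Two small corrections. First, the fiber is the compact orbit $\mathrm{O}(k)/\mathrm{O}(k-r)$, not a noncompact one. Since the ideal is $\mathrm{O}(k)$-invariant, the local RLCT is constant along this orbit, so the uniformity issue you flag as the expected obstacle does not arise. Second, in the $\int K^{-z}$ convention the relevant pole is the smallest one (here the only one), not the largest.

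For the $r(r-1)/2$ complementary coordinates of $\bl{B}$, you can take its strictly upper-triangular entries. At the diagonal point $\bs{\Lambda}^{1/2}$ these complete $\bl{S}$ to a chart, which is exactly the local counterpart of the paper's QR gauge.
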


The rest of this section reviews the needed algebraic tools, develops some preliminary lemmas, and then gives a proof of Theorem~\ref{thm:formula-learning-coeff}.

\subsection{Algebraic aspects and preliminary lemmas}

The learning coefficient-multiplicity pair corresponds to an algebraic quantity, the \textit{real log canonical threshold} (RLCT).  A general setup concerning the RLCT of an ideal of analytic functions is to let $\Omega \subseteq \R^d$ be a compact semianalytic subset and denote by $\mathcal{A}(\Omega)$ the ring of all functions $\Omega \to \R$ that are analytic at every point of $\Omega$. Consider a finitely generated ideal $\mathcal{I} = \langle f_1,\ldots,f_h \rangle \subseteq \mathcal{A}(\Omega)$ and a function $\phi: \Omega \to \R$ that is the pointwise product of an analytic map and an everywhere positive and smooth map. It can be shown that
\[
\zeta_{\mathcal{I}}(z) = \int_\Omega (f_1(\omega)^2 + \ldots + f_h(\omega)^2)^{-z/2} |\phi(\omega)| \;\mathrm{d}\omega
\]
extends to a meromorphic function $\mathbb C \to \mathbb C$ with poles that are rational numbers; cf.~\cite{WatanabeSingularLearningBook}. We denote the pair consisting of the smallest pole $\lambda$ and the multiplicity $m$ of this pole by
\[
\rlct_\Omega(\mathcal{I}; \phi) = (\lambda,m)
\]
and call it the real log canonical threshold of $\mathcal{I}$ on $\Omega$ with respect to $\phi$. 
This RLCT is independent of the choice of generators of $\mathcal{I}$; see~\cite[Prop.~6]{Lin2017Ideal}. Moreover, for every $x \in \Omega$ there exists a compact neighborhood $\Omega_x \subseteq \Omega$ of $x$ such that
\[
\rlct_{\Omega_x}(\mathcal{I};\phi) = \rlct_U(\mathcal{I};\phi)
\]
for every compact neighborhood $U \subseteq \Omega_x$ of $x$; see~\cite[Lemma 1]{Lin2017Ideal}. We define $\rlct_{x}(\mathcal{I};\phi) = \rlct_{\Omega_x}(\mathcal{I};\phi)$. It is shown in~\cite[Prop.~3]{Lin2017Ideal} that $\rlct_{\Omega}(\mathcal{I};\phi) = \min_{x \in \Omega} \rlct_{x}(\mathcal{I};\phi)$, where we order pairs by 
\[
 (\lambda,m) \leq (\lambda',m') \text{ if and only if } \lambda < \lambda' \text{ or } \lambda = \lambda' \text{ and } m \geq m'.
 \]
 
 The fiber ideal of a PPCA model that we will now describe is generated by polynomials. Consequently, all singularity types of the associated real analytic space appear in a compact subset of the parameter space $\mb{R}^{p \times k} \times \mb{R}_{> 0}$, and we can therefore extend our treatment from compact $\Omega$ to this non-compact space.

Recall that the covariance matrices of the $p$-dimensional PPCA model with $k$ PCs are parametrized by $\varphi_k(\bl{W},\sigma^2) = \bl{W}\bl{W}^\intercal + \sigma^2\bl{I}_p$. The \textit{fiber} at $\bs{\Sigma}_0 \in \mc{M}_k$ is the set of all $(\bl{W},\sigma^2)$ with $\bl{W}\bl{W}^\intercal + \sigma^2 \bl{I}_p = \bs{\Sigma}_0$, namely the inverse image of the parameterization map $\varphi_k^{-1}(\bs{\Sigma}_0)$. Corresponding to a fiber is the \textit{fiber ideal}
\[
\mathcal{I}_{p,k}(\bs{\Sigma}_0) = \langle \bl{W}\bl{W}^\intercal + \sigma^2\bl{I}_p - \bs{\Sigma}_0 \rangle,
\]
where $\langle \bl{M} \rangle$ denotes the ideal \citep[Chap.~1]{CoxLittleOSheaIdealsVarieties} generated by the entries of the matrix $\bl{M}$. The elements of $\mc{I}_{p,k}(\bs{\Sigma}_0)$ are analytic functions in the variables $(w_{11},\ldots,w_{pk},\sigma^2)$. Any function $f \in \mathcal{I}_{p,k}(\bs{\Sigma}_0)$ has the property that it vanishes on the fiber at $\bs{\Sigma}_0$: if $\bl{W}\bl{W}^\intercal + \sigma^2 \bl{I}_p = \bs{\Sigma}_0$ then $f(\bl{W},\sigma^2) = 0$. Intuitively, for large sample sizes, the marginal likelihood integral is determined by the behavior of the likelihood and prior near the fiber of $\bs{\Sigma}_0$, hence the pertinence of the fiber ideal.

The relevant function $\phi$ in the RLCT for the PPCA model will be the prior density $\phi_k$. As we focus on the case of a smooth and everywhere positive prior $\phi_k$, its specific form does not matter and we can assume $\phi_k = 1$; see~\cite[Lemma 1]{Lin2017Ideal}.

The following result is the key link between the learning coefficient, RLCT, and the fiber ideal, and is a direct consequence of~\cite[Thm.~2]{Lin2017Ideal}.

\begin{fact}\label{fact:fiber-ideal}
    Let $\bs{\Sigma}_0\in \mc{M}_k$ be a symmetric positive definite matrix lying in the PPCA model with $p$-dimensional observations and $k$ principal components. Then the following holds for the learning coefficient $\lambda_k(\bs{\Sigma}_0)$ and its multiplicity $\mf{m}_k(\bs{\Sigma}_0)$:
    \[
    (2\lambda_k(\bs{\Sigma}_0),\mf{m}_k(\bs{\Sigma}_0)) = \min_{(\bl{W}_0,\sigma_0^2) \in \varphi_k^{-1}(\bs{\Sigma}_0)} \rlct_{(\bl{W}_0,\sigma_0^2)}(\mathcal{I}_{p,k}(\bs{\Sigma}_0);1).
    \]
\end{fact}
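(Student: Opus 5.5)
We derive the Fact by combining Watanabe's characterization of the learning coefficient through the Kullback--Leibler divergence with the comparison between that divergence and the fiber ideal in \cite[Thm.~2]{Lin2017Ideal}.

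\emph{Step 1 (zeta function of the KL divergence).} Write $\omega=(\bl{W},\sigma^2)$ and
\[
K(\omega)=\mathrm{KL}\big(\mc{N}_p(\bl{0},\bs{\Sigma}_0)\,\|\,\mc{N}_p(\bl{0},\varphi_k(\omega))\big)=\tfrac12\big(\tr(\varphi_k(\omega)^{-1}\bs{\Sigma}_0)-p-\log\det(\varphi_k(\omega)^{-1}\bs{\Sigma}_0)\big).
\]
By \cite[Chap.~6]{WatanabeSingularLearningBook}, the pair $(\lambda_k(\bs{\Sigma}_0),\mf{m}_k(\bs{\Sigma}_0))$ in \eqref{eqn:LogMargLikeAsymptotics} is the smallest pole, with its order, of $z\mapsto\int K(\omega)^{-z}\phi_k(\omega)\,\mathrm{d}\omega$. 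By \cite[Lemma 1]{Lin2017Ideal} we may replace $\phi_k$ by $1$ when working locally.

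\emph{Step 2 (local equivalence of $K$ and the fiber ideal).} Let $g(\bs{\Sigma})$ denote the Gaussian KL divergence from $\bs{\Sigma}_0$ to $\bs{\Sigma}$. As a function on $\mc{S}^p_{++}$, $g$ is real analytic, vanishes exactly at $\bs{\Sigma}_0$, has zero gradient there, and has positive definite Hessian there, namely the Fisher information of the regular full Gaussian covariance family. Hence there are a neighborhood $V$ of $\bs{\Sigma}_0$ and constants $0<c_1\le c_2$ with
\[
c_1\|\bs{\Sigma}-\bs{\Sigma}_0\|_F^2\le g(\bs{\Sigma})\le c_2\|\bs{\Sigma}-\bs{\Sigma}_0\|_F^2 \qquad \text{on } V.
\]
Composing with $\varphi_k$, on a neighborhood of every fiber point $K$ is sandwiched between constant multiples of $f(\omega)=\sum_{i,j}(\varphi_k(\omega)-\bs{\Sigma}_0)_{ij}^2$, the sum of squares of the generators of $\mc{I}_{p,k}(\bs{\Sigma}_0)$. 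This sandwich is exactly the setting of \cite[Thm.~2]{Lin2017Ideal}, and it is the main point to verify. Two functions bounded by constant multiples of one another have the same smallest pole and multiplicity for $\int(\cdot)^{-z}$. The substitution $z\mapsto z/2$ then converts a pole at $\lambda$ of $\int f^{-z}$ into a pole at $2\lambda$ of the ideal zeta function $\int f^{-z/2}$. This accounts for the factor $2$: locally, $(2\lambda,m)=\rlct(\mc{I}_{p,k}(\bs{\Sigma}_0);1)$.

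\emph{Step 3 (localization to the fiber).} By \cite[Prop.~3]{Lin2017Ideal}, the global RLCT on a compact set $\Omega$ equals the minimum of the local RLCTs over $x\in\Omega$. At points off the fiber $K>0$, so the integrand is nonsingular and these points contribute no poles; the minimum therefore runs over $\varphi_k^{-1}(\bs{\Sigma}_0)$.

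\emph{Step 4 (non-compactness).} The parameter space $\R^{p\times k}\times\R_{>0}$ is not compact, which is the main technical obstacle. The fiber itself is compact. Because $k<p$, the matrix $\bs{\Sigma}_0-\sigma^2\bl{I}_p=\bl{W}\bl{W}^\intercal$ must be singular, so $\sigma^2$ equals the smallest eigenvalue of $\bs{\Sigma}_0$. Then $\bl{W}\bl{W}^\intercal$ is fixed, which bounds $\|\bl{W}\|$. Choose a compact neighborhood $\Omega$ of the fiber on which Steps 2--3 apply. Outside $\Omega$, $K$ is bounded away from zero: as $\sigma^2\to0$, $\sigma^2\to\infty$, or $\|\bl{W}\|\to\infty$, $g(\varphi_k(\omega))$ tends to infinity or to a positive limit. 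Together with the standard tail conditions on the prior in \cite{WatanabeSingularLearningBook}, the contribution of the complement of $\Omega$ to the marginal likelihood is exponentially small in $n$ and does not affect $(\lambda,\mf{m})$. This yields the stated identity.
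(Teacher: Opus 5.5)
Your proposal is correct and follows the paper's route: the paper obtains the Fact directly from \cite[Thm.~2]{Lin2017Ideal}, and your Steps 1--3 unpack exactly that theorem, namely the local two-sided comparison of the Kullback--Leibler divergence with the sum of squares of the generators of $\mathcal{I}_{p,k}(\bs{\Sigma}_0)$, the factor $2$ coming from the exponent $-z/2$, and localization to the fiber. Your Step 4 spells out the compactness of the fiber (using $k<p$) and the reduction from the non-compact parameter space, which the paper covers only with a brief remark; you do not need extra tail conditions on the prior there, because the Gaussian log-likelihood ratio equals $-n$ times a difference of divergences evaluated at the sample covariance, this difference is bounded below away from the fiber with probability tending to one, and so integrability of $\phi_k$ suffices.
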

We now detail a few further facts regarding RLCTs that will be needed to determine the learning coefficients. The following statement that we will use without further reference is a slight generalization of~\cite[Prop.~8]{Lin2017Ideal}, but the proof is exactly the same.

\begin{fact}[Chain rule \text{\cite[Prop.~4.6]{lin2011Thesis}}]\label{fact:chain-rule}
    Let $\Omega \subseteq \R^d$  be open
     and $W \subseteq \Omega$ a compact semianalytic neighborhood of a point $x\in\Omega$. Let $\mathcal{I}=\langle f_1,\ldots,f_h\rangle$ be a finitely generated ideal of $\mathcal{A}(\Omega)$ and $ W = W_1 \cup \ldots \cup W_u \cup V$ a partition, where $V \subsetneqq W$ is an analytic variety and the $W_i$ are compact semianalytic subsets of dimension $d$. Let $M$ be a real analytic manifold and $\rho: M \to W$ a proper real analytic map whose restrictions $\rho^{-1}(W_i) \to W_i$ are real analytic isomorphisms, that is, bijective with real analytic inverse. Then
    \[
    \rlct_x(\mathcal{I};\phi) = \min_{y \in \rho^{-1}(x)}\rlct_y(\rho^*\mathcal{I};(\phi \circ \rho)\cdot\det\jac\rho),
    \]
    where $\rho^*\mathcal{I} = \{g \circ \rho \mid g \in \mathcal{I}\} = \langle f_1 \circ \rho, \ldots, f_h \circ \rho \rangle \subseteq \mathcal{A}(M)$ is the pullback of $\mathcal{I}$ under $\rho$ and $\jac \rho$ is the Jacobian matrix of~$\rho$.
\end{fact}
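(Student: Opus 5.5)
The plan is to compare zeta functions directly by a change of variables, and then localize on both sides using the facts recalled above from \cite{Lin2017Ideal}: independence of generators, stabilization of $\rlct$ on small compact neighborhoods (Lemma~1), and the formula $\rlct_\Omega = \min_{x\in\Omega}\rlct_x$ (Prop.~3).

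\emph{Step 1: setup.} Write $f=f_1^2+\cdots+f_h^2$. Fix a compact semianalytic neighborhood $U\subseteq W$ of $x$, small enough that $\rlct_U(\mathcal{I};\phi)=\rlct_x(\mathcal{I};\phi)$. By properness of $\rho$, the set $\rho^{-1}(U)$ is compact. Since $\rho^{-1}(U)=\bigcup_i \rho^{-1}(W_i\cap U)\cup\rho^{-1}(V\cap U)$, we can decompose
\[
\zeta_U(z)=\sum_{i=1}^u\int_{W_i\cap U} f^{-z/2}|\phi|\,\mathrm{d}\omega ,
\]
because the analytic variety $V\subsetneqq W$ is of dimension $<d$ and hence Lebesgue-null.

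\emph{Step 2: change of variables.} On each piece, $\rho:\rho^{-1}(W_i)\to W_i$ is an analytic isomorphism. The change-of-variables formula therefore turns the $i$-th summand into
\[
\int_{\rho^{-1}(W_i\cap U)}(f\circ\rho)^{-z/2}\,|\phi\circ\rho|\,|\det\jac\rho|\,\mathrm{d}y .
\]
The remaining piece $\rho^{-1}(V\cap U)$ contributes nothing. Indeed, $\rho$ maps this set into a set of dimension $<d$, so $\det\jac\rho$ vanishes on the interior of $\rho^{-1}(V)$. On the rest of $\rho^{-1}(V)$, which is a proper analytic subset, the set is null.

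Summing over $i$ gives, for $\operatorname{Re} z$ small and hence by meromorphic continuation everywhere,
\[
\zeta_U(z)=\int_{\rho^{-1}(U)}\big(f\circ\rho\big)^{-z/2}\,|(\phi\circ\rho)\det\jac\rho|\,\mathrm{d}y .
\]
Two facts make the right-hand side a legitimate RLCT integral. First, $f\circ\rho=\sum_j (f_j\circ\rho)^2$, so it is built from generators of $\rho^*\mathcal{I}$. Second, the weight $(\phi\circ\rho)\det\jac\rho$ is again the product of an analytic function and a smooth positive function. It follows that
\[
\rlct_x(\mathcal{I};\phi)=\rlct_{\rho^{-1}(U)}\big(\rho^*\mathcal{I};(\phi\circ\rho)\det\jac\rho\big).
\]
By Prop.~3 of \cite{Lin2017Ideal}, this equals the minimum of the local RLCTs over $y\in\rho^{-1}(U)$.

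\emph{Step 3: localization to the fiber (main obstacle).} It remains to show that, as $U$ shrinks, this minimum is attained on the fiber $\rho^{-1}(x)$. The fiber is compact by properness. For each $y\in\rho^{-1}(x)$, choose a stable neighborhood $\Omega_y$ as in Lemma~1 of \cite{Lin2017Ideal}, so that every point of the interior of $\Omega_y$ has local RLCT at least $\rlct_y$. Extract a finite subcover of the fiber by the interiors of these $\Omega_y$; its union is an open set $N\supseteq\rho^{-1}(x)$. Properness implies that $\rho^{-1}(U)\subseteq N$ once $U$ is small enough. For such $U$, the minimum over $\rho^{-1}(U)$ is bounded below by $\min_{y\in\rho^{-1}(x)}\rlct_y$. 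Conversely, each $y$ in the fiber lies in $\rho^{-1}(U)$, which gives the reverse inequality.

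Of all the steps, I expect this compactness and semicontinuity argument to need the most care. In particular, one must justify that the local RLCT is lower semicontinuous in the pair order, meaning it cannot drop at nearby points. I would derive this from Lin's Lemma~1 together with Prop.~3 applied to small compact balls around $y$.
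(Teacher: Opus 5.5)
Your proposal is correct and is essentially the argument the paper defers to, namely Lin's proof of \cite[Prop.~8]{Lin2017Ideal}, of which this Fact is a slight generalization. That argument identifies the zeta function on $U$ with the one on the compact set $\rho^{-1}(U)$ by change of variables, since $V$ is null and $\det\jac\rho$ vanishes on any open part mapped into $V$, and then uses Prop.~3, Lemma~1 and properness to localize the minimum to the fiber $\rho^{-1}(x)$; the semicontinuity you flag follows at once by applying Prop.~3 to the stable neighborhood $\Omega_y$ itself.
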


In the proof of Theorem~\ref{thm:formula-learning-coeff}, we apply Fact~\ref{fact:chain-rule} with a specific map $\rho$, namely the \textit{blow-up} of $\R^d$ along the linear subspace defined by the ideal $\langle x_1,\ldots,x_c \rangle$. Its domain is the union of $c$ different charts isomorphic to $\R^d$ and the restriction of $\rho$ to the $i$th chart is given in local coordinates by $$(x_1,\ldots,x_d) \mapsto (x_ix_1,\ldots,x_ix_{i-1},x_i,x_ix_{i+1},\ldots,x_ix_c,x_{c+1},\ldots,x_d).$$ This map is a real analytic isomorphism away from $x_1 = \cdots = x_c = 0$ in the image space.

\begin{fact}[Sum rule \text{\cite[Prop.~7]{Lin2017Ideal}}]
\label{fact:sum-rule}
Let $\Omega_1 \subseteq \R^{d_1}$ and $\Omega_2 \subseteq \R^{d_2}$ be compact subsets and let $\mathcal{I} \subseteq \mathcal{A}(\Omega_1)$ and $\mathcal{J} \subseteq \mathcal{A}(\Omega_2)$ be finitely generated ideals. Then, composing with the canonical projections $\Omega_1 \times \Omega_2 \to \Omega_i$, we can consider $\mathcal{I}$ and $\mathcal{J}$ as ideals of $\mathcal{A}(\Omega_1 \times \Omega_2)$. Let $\phi_1: \Omega_1 \to \R$ and $\phi_2: \Omega_2 \to \R$ be pointwise products of an analytic map and an everywhere positive and smooth map. Denote $\rlct_{\Omega_1 }(\mathcal{I};\phi_1) = (\lambda_1,\mathfrak m_1)$ and $\rlct_{\Omega_2}(\mathcal{J};\phi_2) = (\lambda_2,\mathfrak m_2)$.  Then
    \[\rlct_{\Omega_1 \times \Omega_2}(\mathcal{I} + \mathcal{J};\phi_1\cdot \phi_2) = (\lambda_1+\lambda_2,\mathfrak m_1+ \mathfrak m_2-1).\]
\end{fact}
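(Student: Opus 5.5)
The approach is to pass from the zeta function to the equivalent Laplace-type integral, where the product structure of $\Omega_1\times\Omega_2$ makes everything factor. Write $K_1(\omega_1)=f_1(\omega_1)^2+\cdots+f_h(\omega_1)^2$ for generators of $\mathcal{I}$ and $K_2(\omega_2)=g_1(\omega_2)^2+\cdots+g_l(\omega_2)^2$ for generators of $\mathcal{J}$. Then $\mathcal{I}+\mathcal{J}$ is generated in $\mathcal{A}(\Omega_1\times\Omega_2)$ by all the $f_i$ and $g_j$ together, so its sum of squares is $K(\omega_1,\omega_2)=K_1(\omega_1)+K_2(\omega_2)$. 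The RLCT does not depend on the choice of generators (\cite[Prop.~6]{Lin2017Ideal}), so it suffices to study $\int (K_1+K_2)^{-z/2}|\phi_1\phi_2|$. This integrand does not factor, but the associated Laplace integral does.

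The key tool is the standard dictionary between poles and asymptotics, as in \cite{WatanabeSingularLearningBook}. For a nonnegative analytic $K$ on a compact set and a weight $|\phi|$ as in the statement, define the state density $v(t)=\int \delta(t-K(\omega))|\phi(\omega)|\,\mathrm d\omega$. Its Mellin transform is $\int_0^\infty t^{-z/2}v(t)\,\mathrm dt=\zeta(z)$. Hence $\rlct(K;\phi)=(\lambda,m)$ is equivalent to
\[
Z(n)=\int e^{-nK(\omega)}|\phi(\omega)|\,\mathrm d\omega = C\, n^{-\lambda/2}(\log n)^{m-1}(1+o(1)),\qquad n\to\infty,
\]
with a constant $C>0$. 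I would obtain this dictionary from resolution of singularities: $K$ becomes locally a normal crossing monomial times a unit, and the leading coefficient is a sum of positive local contributions. That positivity is exactly what lets the multiplicity be read off. With this in hand, Fubini gives immediately
\[
Z(n)=\int e^{-nK_1}|\phi_1|\,\mathrm d\omega_1\cdot\int e^{-nK_2}|\phi_2|\,\mathrm d\omega_2=Z_1(n)Z_2(n)= C_1C_2\, n^{-(\lambda_1+\lambda_2)/2}(\log n)^{(m_1-1)+(m_2-1)}(1+o(1)).
\]
Reading this back through the dictionary gives learning pair $(\lambda_1+\lambda_2,\,m_1+m_2-1)$, which is the claim.

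An alternative route, avoiding asymptotics, works directly with the Mellin side. The state density of $K_1+K_2$ is the additive convolution $v=v_1*v_2$. Its Mellin transform is then a Mellin–Barnes type integral of $\zeta_1$ and $\zeta_2$, and the principal parts at the smallest poles, $(z-\lambda_i)^{-m_i}$, combine into a pole at $\lambda_1+\lambda_2$ of order $m_1+m_2-1$.

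The main obstacle is rigor in the dictionary step. The leading pole and its order must determine the leading asymptotic term of $Z(n)$, with no cancellation. Conversely, the asymptotic form of $Z$ must determine $(\lambda,m)$. The second direction is a Tauberian statement and needs $v\ge 0$, which holds because we integrate $|\phi|$. I would prove both directions locally in resolved coordinates. There $v(t)\sim c\,t^{\lambda/2-1}(\log(1/t))^{m-1}$ as $t\to0$ with $c>0$, and the Laplace transform of such a $v$ has the stated asymptotics by a direct computation. Along the way I would check that the factor $2$ from the convention $K^{-z/2}$ is handled consistently on both sides.
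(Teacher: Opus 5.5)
Your proposal is correct and follows essentially the standard route behind this fact, which the paper does not prove but imports from \cite[Prop.~7]{Lin2017Ideal}. That route is: take $K_1+K_2$ as the sum of squares for $\mathcal{I}+\mathcal{J}$, use the correspondence between the first pole (with its order) and the Laplace asymptotics $Z(n)\sim C\,n^{-\lambda/2}(\log n)^{m-1}$ with $C>0$, and factor $Z=Z_1Z_2$ by Fubini. The ``converse'' step you flag as Tauberian is not needed, and neither is the Mellin--Barnes alternative: applying the forward direction to $K_1+K_2$ itself and comparing leading terms of the form $n^{-a}(\log n)^{b}$ already determines $(\lambda,m)$.
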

The statement below follows from~\cite[Ex.~2.4]{drton2025Factor} by transforming a neighborhood of a smooth point in a real analytic variety to a linear space.

\begin{fact}\label{fact:smooth-RLCT} 
   Let $\Omega \subseteq \R^d$ be full-dimensional and compact, and let $\mathcal{I} \subseteq \mathcal{A}(\Omega)$ be a finitely generated ideal. Let $V = \{\omega \in \Omega : f(\omega) = 0 \text{ for all } f \in \mathcal{I}\} \subseteq \Omega$ be the real analytic variety defined by $\mathcal{I}$, that is, the common zero set of its elements. Let $x \in V$ be a smooth point at which $V$ has codimension $c$ within $\Omega$. Then $\rlct_x(\mathcal{I};1) = (c,1)$.
\end{fact}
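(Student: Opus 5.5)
The plan is to reduce to a linear ideal by an analytic change of coordinates, and then compute the RLCT of the linear ideal directly. First I will make precise what ``smooth point of codimension $c$'' means, since the RLCT depends on the ideal $\mathcal{I}$ and not just on the zero set $V$. I will read it in the Jacobian sense, which is what is needed for the application to fiber ideals: near $x$ there are elements $f_1,\ldots,f_c\in\mathcal{I}$ with linearly independent gradients at $x$, and the germ of $\mathcal{I}$ at $x$ equals $\langle f_1,\ldots,f_c\rangle$. If $\mathcal{I}$ is only given by generators $f_1,\ldots,f_h$ whose Jacobian has rank $c$ at $x$, I will pick $c$ of them with independent gradients. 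The remaining generators vanish on the smooth germ $\{f_1=\cdots=f_c=0\}$, so they lie in the ideal of this germ, which is generated by $f_1,\ldots,f_c$ (for instance by Hadamard's lemma in the coordinates below). Since the RLCT does not depend on the choice of generators, this lets me replace $\mathcal{I}$ locally by $\langle f_1,\ldots,f_c\rangle$.

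Next I complete $f_1,\ldots,f_c$ to local coordinates. I choose linear functions $g_{c+1},\ldots,g_d$ such that $\Psi=(f_1,\ldots,f_c,g_{c+1},\ldots,g_d)$ has invertible Jacobian at $x$. By the real analytic inverse function theorem, $\Psi$ is an analytic isomorphism from a compact neighborhood $U$ of $x$ onto a compact neighborhood $U'$ of $0$. I then apply the chain rule (Fact~\ref{fact:chain-rule}) with $\rho=\Psi^{-1}$, which is an isomorphism, so no exceptional set is needed. The pullback ideal is $\langle y_1,\ldots,y_c\rangle$. The new amplitude is $|\det\jac\rho|$, which is smooth and bounded away from $0$ near the origin, so by \cite[Lemma 1]{Lin2017Ideal} it can be replaced by $1$. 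This gives $\rlct_x(\mathcal{I};1)=\rlct_0(\langle y_1,\ldots,y_c\rangle;1)$.

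Finally I compute the RLCT of the linear ideal on a box $[-\epsilon,\epsilon]^d$. For a single coordinate, $\int_{-\epsilon}^{\epsilon}(y_i^2)^{-z/2}\,dy_i=2\epsilon^{1-z}/(1-z)$, which has a simple pole at $z=1$, so $\rlct(\langle y_i\rangle;1)=(1,1)$. For the $d-c$ free coordinates, the zeta function of the zero ideal is constant in $z$, so it contributes no pole and does not change the count. Applying the sum rule (Fact~\ref{fact:sum-rule}) $c-1$ times gives $(1+\cdots+1,\,1+\cdots+1-(c-1))=(c,1)$. As a cross-check, passing to polar coordinates in $(y_1,\ldots,y_c)$ gives $\int_0^\epsilon r^{-z+c-1}\,dr$ times a constant, which again has a simple pole at $z=c$.

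I expect the only real obstacle to be the first step: justifying that $\mathcal{I}$ locally coincides with the ideal generated by $c$ functions with independent gradients. If $\mathcal{I}$ were non-radical, for example $\langle y_1^2\rangle$, then $V$ would still be smooth but the RLCT would be different. So the statement has to be understood with this Jacobian-rank hypothesis, and in the applications I will verify it directly for the fiber ideals.
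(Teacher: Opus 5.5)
Your proof is correct and follows the paper's route. The paper obtains the fact by an analytic change of coordinates that straightens the germ at $x$ into a linear subspace, and then quotes the RLCT $(c,1)$ of $\langle y_1,\ldots,y_c\rangle$ from \cite[Ex.~2.4]{drton2025Factor}; you compute that RLCT directly via the sum rule (or polar coordinates) instead.

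Your reading of ``smooth point'' in the Jacobian sense, so that the ideal and not merely $V$ becomes linear, is exactly what the paper's one-line argument assumes tacitly. It also holds where the fact is applied, e.g.\ for $\langle \bl{W}_{11}\bl{W}_{11}^\intercal-\bl{I}_r\rangle$ at $\bl{I}_r$. Note that even radicality would not suffice: $\langle y_1^2+y_2^2\rangle$ in $\R^2$ is a prime ideal of germs at the origin, yet its RLCT there is $(1,1)$, although the origin is a smooth point of $V$ of codimension $2$.
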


Denote by $\mathcal{L}^{p\times k}_{r,+}$ the space of real $p\times k$ matrices of the form 
\[
    \bl{W} = \begin{bmatrix}
    \bl{W}_{11} & \bl{0} \\
    \bl{W}_{21} & \bl{W}_{22}
\end{bmatrix},
\]
where $\bl{W}_{11}$ is an $r \times r$ lower triangular matrix with positive diagonal entries, $\bl{W}_{21} \in \R^{(p-r)\times r}$ and $\bl{W}_{22} \in \R^{(p-r)\times(k-r)}$.
The proof of the following lemma is analogous to the one of~\cite[Fact~3.5]{drton2025Factor}.

\begin{lemma}\label{lem:QR-decomposition}
     Let $\bs{\Sigma}_0 \in \mc{M}_r\setminus \mc{M}_{r-1}$ be a symmetric positive definite matrix that has exactly $p-r$ eigenvalues equal to the smallest eigenvalue. Then
    \[
    \min_{(\bl{W},\sigma^2)\in \R^{p\times k}\times \R_{>0}} \rlct_{(\bl{W},\sigma^2)}(\mathcal{I}_{p,k}(\bs{\Sigma}_0);1) = \min_{(\bl{W},\sigma^2)\in \mathcal{L}^{p\times k}_{r,+}\times \R_{>0}} \rlct_{(\bl{W},\sigma^2)}(\mathcal{I}_{p,k}(\bs{\Sigma}_0);1),
    \]
     where we consider $\mathcal{I}_{p,k}(\bs{\Sigma}_0)$ as an ideal of $\mathcal{A}(\mathcal{L}^{p\times k}_{r,+}\times \R_{>0})$ on the right side of this equation.
\end{lemma}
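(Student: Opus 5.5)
The plan is to reduce the global minimum of local RLCTs over the full parameter space to a minimum over a "triangular" slice. We use the $\text{O}(k)$-action $\bl{W}\mapsto \bl{W}\bl{U}$ together with a QR/LQ-type factorization. By the chain rule (Fact~\ref{fact:chain-rule}), this action does not change local RLCTs. The inequality "$\leq$" is immediate, since points of $\mathcal{L}^{p\times k}_{r,+}\times\R_{>0}$ are points of the whole space. Strictly speaking, an RLCT computed on the smaller space (a subset of lower dimension, viewed in its own coordinates) is a different object. So the real content is a product decomposition: near a fiber point, the full space is locally isomorphic to a product of $\text{O}(k)$-type directions, in which the ideal is constant, with the triangular slice. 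We will then use that the directions orthogonal to the slice contribute no zeta-function poles.

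First, we describe the fiber. Since $\bs{\Sigma}_0$ has smallest eigenvalue $\gamma$ of multiplicity exactly $p-r$, and $\bl{W}\bl{W}^\intercal=\bs{\Sigma}_0-\sigma^2\bl{I}_p$ must be positive semidefinite of rank at most $k<p$, every fiber point has $\sigma^2=\gamma$ and $\text{rank}(\bl{W})=r$. After a fixed orthogonal change of coordinates in $\R^p$, which preserves the ideal up to an invertible linear change of generators, we may assume the first $r$ rows of $\bl{W}_0$ carry its row space generically. That is, the top $r\times k$ block $\bl{A}$ of $\bl{W}$ has full rank $r$ on a neighborhood of the fiber point, with the remaining rows $\bl{B}$ arbitrary. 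Next, we apply a local analytic LQ decomposition $\bl{A}=[\bl{L}\ \bl{0}]\bl{Q}$ with $\bl{L}$ lower triangular with positive diagonal and $\bl{Q}\in\text{O}(k)$. This is unique once we choose the last $k-r$ rows of $\bl{Q}$ through a local section of the Stiefel-type fibration, i.e., a choice of complement frame depending analytically on a chart of $\text{O}(k)/\text{O}(k-r)$. The result is an analytic isomorphism
\[
\rho:\big(\mathcal{L}^{p\times k}_{r,+}\times\R_{>0}\big)\times \mathcal{O}\longrightarrow \R^{p\times k}\times\R_{>0},\qquad ((\bl{W}',\sigma^2),\bl{Q})\mapsto(\bl{W}'\bl{Q},\sigma^2),
\]
near the relevant points, where $\mathcal{O}$ is an open chart of $\text{O}(k)$ of dimension $k(k-1)/2$. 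A dimension count confirms this: $pk = [r(r+1)/2 + (p-r)r + (p-r)(k-r)] + [k(k-1)/2 - (k-r)(k-r-1)/2]$, after quotienting the $\text{O}(k-r)$ redundancy, so the chart for $\mathcal{O}$ should really be a chart of $\text{O}(k)/\text{O}(k-r)$. The Jacobian determinant of $\rho$ is analytic and nonvanishing, hence it can be absorbed as a positive smooth factor.

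Since $\bl{W}'\bl{Q}(\bl{W}'\bl{Q})^\intercal=\bl{W}'\bl{W}'^\intercal$, the pulled-back ideal $\rho^*\mathcal{I}_{p,k}(\bs{\Sigma}_0)$ is generated by functions of $(\bl{W}',\sigma^2)$ alone. By Fact~\ref{fact:sum-rule}, applied with the zero-free ideal $\langle 1\rangle$ in the $\mathcal{O}$-coordinates (whose RLCT is $(\infty,\cdot)$ in the pole sense, i.e., it contributes trivially; more carefully, integrating out $\bl{Q}$ just multiplies the zeta function by a positive constant), the local RLCT at $((\bl{W}_0',\sigma_0^2),\bl{Q}_0)$ equals the local RLCT at $(\bl{W}_0',\sigma_0^2)$ on the slice. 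Taking the minimum over the fiber and using Fact~\ref{fact:chain-rule} yields the equality.

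The main obstacle is a technicality: the LQ factorization requires the top $r$ rows to have full rank, and it is only locally analytic modulo $\text{O}(k-r)$. I would handle this with a finite cover of the fiber by charts, each using a different orthogonal coordinate permutation of $\R^p$; compactness of the relevant part of the fiber justifies the cover. Alternatively, the argument can be run as in \cite[Fact~3.5]{drton2025Factor}.
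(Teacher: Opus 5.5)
Your core argument is the QR-type argument the paper defers to \cite[Fact~3.5]{drton2025Factor}, and it is sound. The generators depend on $\bl{W}$ only through $\bl{W}\bl{W}^\intercal$. Near a fiber point, $\R^{p\times k}$ is analytically the product of $\mathcal{L}^{p\times k}_{r,+}$ with a chart of the Stiefel manifold $\mathrm{O}(k)/\mathrm{O}(k-r)$. Once the nonvanishing Jacobian is absorbed, the zeta function factors, so the local RLCTs coincide. Keep the Fubini justification and drop the appeal to the sum rule with $\langle 1\rangle$: $\mathcal{I}+\langle 1\rangle=\langle 1\rangle$, and what you are actually using is the extension of $\mathcal{I}$ to the product, not a sum of ideals.

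The step that needs repair is your normalization that the top $r$ rows of $\bl{W}$ have rank $r$. Let $\gamma$ be the smallest eigenvalue. At every fiber point, the Gram matrix of these rows is the leading $r\times r$ block of $\bs{\Sigma}_0-\gamma\bl{I}_p$, so the condition holds on the whole fiber or nowhere on it.
\begin{itemize}
\item A cover by permuted coordinate charts is therefore never needed.
\item An orthogonal change of coordinates is not a harmless normalization. It leaves the left-hand side unchanged (Lemma~\ref{lem:diagonalization}), but it replaces the right-hand side by the one for $\bl{V}\bs{\Sigma}_0\bl{V}^\intercal$, because the slice $\mathcal{L}^{p\times k}_{r,+}$ is tied to coordinates.
\end{itemize}
Invertibility of that leading block is in fact a tacit hypothesis of the lemma. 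It holds for the form $\diag(d_1,\ldots,d_r,0,\ldots,0)+c^2\bl{I}_p$ used in the proof of Theorem~\ref{thm:formula-learning-coeff-rlct}. Without it the statement fails: for $p=2$, $k=r=1$, $\bs{\Sigma}_0=\diag(1,2)$, the fiber $\{w_{11}=0,\ w_{21}=\pm 1,\ \sigma^2=1\}$ misses the slice $\{w_{11}>0\}$.

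State this hypothesis explicitly. Then your chart is available at every fiber point in the original coordinates, and every fiber point has the form $\bl{W}_0'\bl{Q}$ with $\bl{Q}\in\mathrm{O}(k)$, where $\bl{W}_0'$ is the unique fiber point in the slice. The comparison of minima then follows.
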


In a final lemma, we show that without loss of generality it can be assumed that $\bs{\Sigma}_0$ is a diagonal matrix. The basic idea is that multiplying the generators in $\bl{W}\bl{W}^\intercal + \sigma^2\bl{I}_p - \bs{\Sigma}_0$ by an invertible matrix gives a different set of generators of $\mathcal{I}_{p,k}(\bs{\Sigma}_0)$,  but does not change the ideal as a whole.

\begin{lemma}\label{lem:diagonalization}
    Let $\bs{\Sigma}_0 \in \mc{M}_r$ be a symmetric positive definite $p\times p$ matrix and let $\bl{V} \in \mathrm{O}(p)$ be orthogonal. Then
    \[
    \min_{(\bl{W},\sigma^2)\in \R^{p\times k}\times \R_{>0}} \rlct_{(\bl{W},\sigma^2)}(\mathcal{I}_{p,k}(\bs{\Sigma}_0);1) = \min_{(\bl{W},\sigma^2)\in \R^{p\times k}\times \R_{>0}} \rlct_{(\bl{W},\sigma^2)}(\mathcal{I}_{p,k}(\bl{V}\bs{\Sigma}_0\bl{V}^\intercal);1).
    \]
    In particular, diagonalizing $\bl{V}\bs{\Sigma}_0\bl{V}^\intercal = \diag(\delta_1,\ldots,\delta_p) \eqcolon \Delta$ with $\bl{V} \in \mathrm{O}(p)$ and $\delta_i \in \R_{>0}$,
    \[
    \min_{(\bl{W},\sigma^2)\in \R^{p\times k}\times \R_{>0}} \rlct_{(\bl{W},\sigma^2)}(\mathcal{I}_{p,k}(\bs{\Sigma}_0);1) = \min_{(\bl{W},\sigma^2)\in \R^{p\times k}\times \R_{>0}} \rlct_{(\bl{W},\sigma^2)}(\mathcal{I}_{p,k}(\Delta);1).
    \]
\end{lemma}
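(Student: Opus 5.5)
The plan is to show that the ideal itself does not change when its generators are transformed by a fixed invertible linear map, and that the corresponding change of variables in the parameter space is an analytic isomorphism with constant Jacobian. Write $\bs{\Sigma}_1 = \bl{V}\bs{\Sigma}_0\bl{V}^\intercal$ and define the linear map
\[
\tau:\R^{p\times k}\times\R_{>0}\to\R^{p\times k}\times\R_{>0},\qquad \tau(\bl{W},\sigma^2) = (\bl{V}^\intercal\bl{W},\sigma^2).
\]
Because $\bl{V}$ is orthogonal, $\tau$ is a real analytic isomorphism. Its inverse is $(\bl{W},\sigma^2)\mapsto(\bl{V}\bl{W},\sigma^2)$, and $|\det\jac\tau| = |\det\bl{V}|^k = 1$.

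The key identity is
\[
(\bl{V}^\intercal\bl{W})(\bl{V}^\intercal\bl{W})^\intercal + \sigma^2\bl{I}_p - \bs{\Sigma}_0 = \bl{V}^\intercal\bigl(\bl{W}\bl{W}^\intercal + \sigma^2\bl{I}_p - \bs{\Sigma}_1\bigr)\bl{V}.
\]
Each entry on the left is therefore a fixed real linear combination of the entries of $\bl{M}_1 := \bl{W}\bl{W}^\intercal + \sigma^2\bl{I}_p - \bs{\Sigma}_1$. Conversely, each entry of $\bl{M}_1$ is a linear combination of the entries on the left, since $\bl{M}_1 = \bl{V}(\cdot)\bl{V}^\intercal$. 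Hence the two generating sets span the same ideal:
\[
\tau^*\mathcal{I}_{p,k}(\bs{\Sigma}_0) = \mathcal{I}_{p,k}(\bs{\Sigma}_1).
\]

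Next, apply the chain rule (Fact~\ref{fact:chain-rule}) with $\rho = \tau$, restricted to compact neighborhoods. Here $\tau$ is a global analytic isomorphism, so no exceptional variety is needed and the partition can be taken trivial. For every point $(\bl{W},\sigma^2)$ this yields
\[
\rlct_{\tau(\bl{W},\sigma^2)}\bigl(\mathcal{I}_{p,k}(\bs{\Sigma}_0);1\bigr) = \rlct_{(\bl{W},\sigma^2)}\bigl(\mathcal{I}_{p,k}(\bs{\Sigma}_1);|\det\jac\tau|\bigr) = \rlct_{(\bl{W},\sigma^2)}\bigl(\mathcal{I}_{p,k}(\bs{\Sigma}_1);1\bigr).
\]
The preimage of a point is a single point, so the minimum in the chain rule is trivial. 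For the last equality, one can instead appeal directly to the definition of $\zeta_{\mathcal{I}}$ via the substitution $\omega = \tau(\omega')$. Since the RLCT does not depend on the choice of generators (\cite[Prop.~6]{Lin2017Ideal}), it is legitimate to use the generators $\bl{V}^\intercal\bl{M}_1\bl{V}$ in place of the entries of $\bl{M}_1$.

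Since $\tau$ is a bijection of $\R^{p\times k}\times\R_{>0}$ onto itself, taking the minimum over all points on both sides gives the first claim. The diagonal case follows by choosing $\bl{V}$ from the spectral theorem. The eigenvalues $\delta_i$ are positive because $\bs{\Sigma}_0$ is positive definite, and $\Delta$ remains in $\mc{M}_r$ by the orthogonal invariance noted in Section~\ref{sec:PPCAGeometry}.

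No step is a serious obstacle. The only point requiring care is the equality of ideals under a linear change of generators, together with the remark that the minima over the non-compact space are still attained, as the paper already argued for polynomial fiber ideals.
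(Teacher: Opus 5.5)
Your proof is correct and follows essentially the same route as the paper. Conjugating the generator matrix by the orthogonal $\bl{V}$ does not change the ideal, and the linear isomorphism $\bl{W}\mapsto\bl{V}^\intercal\bl{W}$, which has unit-modulus Jacobian, then carries the local RLCTs across pointwise via the chain rule (Fact~\ref{fact:chain-rule}); your single identity $\tau^*\mathcal{I}_{p,k}(\bs{\Sigma}_0)=\mathcal{I}_{p,k}(\bl{V}\bs{\Sigma}_0\bl{V}^\intercal)$ just merges the paper's two steps, rewriting the generators and then pulling back, into one.
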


\begin{proof}
    Let $\mathcal{I}$ be the ideal generated by the entries of the matrix $\bl{V}(\bl{W}\bl{W}^\intercal + \sigma^2\bl{I}_p - \bs{\Sigma}_0)\bl{V}^\intercal = (\bl{V}\bl{W})(\bl{V}\bl{W})^\intercal + \sigma^2\bl{I}_p - \bl{V}\bs{\Sigma}_0\bl{V}^\intercal$. Every generator of $\mathcal{I}$ is a linear combination of generators of $\mathcal{I}_{p,k}(\bs{\Sigma}_0)$, so $\mathcal{I} \subseteq \mathcal{I}_{p,k}(\bs{\Sigma}_0)$. The converse is shown analogously by applying $\bl{V}^\intercal$ from the left and $\bl{V}$ from the right to the matrix of generators of $\mathcal{I}$. Now, using Fact~\ref{fact:chain-rule} on $\mathcal{I}$ and the real analytic isomorphism $\rho: \bl{W} \mapsto \bl{V}^\intercal\bl{W}$ yields the statement of the lemma.
\end{proof}

\subsection{Proof of our main result}

We first restate our main result (Theorem~\ref{thm:formula-learning-coeff}) on learning coefficients of PPCA models as the equivalent statement on the RLCT.

\begin{theorem}\label{thm:formula-learning-coeff-rlct}
Let $p \geq 1$ and $0 \leq r \leq k\leq p-1$, and take $\bs{\Sigma}_0 \in \mc{M}_r\setminus \mc{M}_{r-1}$. For $\phi_k$ a smooth and everywhere positive prior density on $\R^{p\times k} \times \R_{>0}$, the fiber ideal $\mathcal{I}_{p,k}(\bs{\Sigma}_0)$ of the $p$-dimensional PPCA model with $k$ PCs generated by the entries of $\bl{W}\bl{W}^\intercal + \sigma^2 \bl{I}_p - \bs{\Sigma}_0$ has global RLCT
\[
\min_{(\bl{W}_0,\sigma_0^2) \in \varphi_k^{-1}(\bs{\Sigma}_0)} \rlct_{(\bl{W}_0,\sigma_0^2)}(\mathcal{I}_{p,k}(\bs{\Sigma}_0); \phi_k) = \left(\frac{pk + r(p-k+1) + 2}{2}, 1  \right).
\]
\end{theorem}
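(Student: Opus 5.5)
By Fact~\ref{fact:fiber-ideal} and the remark preceding it, we may take $\phi_k=1$. We must then show that the minimal local RLCT of $\mathcal{I}_{p,k}(\bs{\Sigma}_0)$ over the fiber equals $(pk+r(p-k+1)+2,\,1)$. The strategy is to split the fiber ideal, after suitable changes of variables, into a sum of ideals in disjoint sets of variables. One summand will be a ``smooth'' piece and the other a homogeneous-type piece, which we resolve by a single weighted blow-up.

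First, normalize $\bs{\Sigma}_0$ and $\bl{W}$.
\begin{itemize}
\item By Lemma~\ref{lem:diagonalization}, assume $\bs{\Sigma}_0=\diag(\delta_1,\dots,\delta_r,s,\dots,s)$ with $\delta_i>s>0$.
\item By Lemma~\ref{lem:QR-decomposition}, restrict to $\bl{W}\in\mathcal{L}^{p\times k}_{r,+}$.
\item Substitute $\sigma^2=s+t$, so $t$ ranges over a neighborhood of $0$ of both signs.
\end{itemize}
Write $\bl{D}_1=\diag(\delta_1-s,\dots,\delta_r-s)$. The generators then split into three blocks:
\[
\bl{W}_{11}\bl{W}_{11}^\intercal+t\bl{I}_r-\bl{D}_1,\qquad \bl{W}_{21}\bl{W}_{11}^\intercal,\qquad \bl{W}_{21}\bl{W}_{21}^\intercal+\bl{W}_{22}\bl{W}_{22}^\intercal+t\bl{I}_{p-r}.
\]

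Next, reduce the ideal block by block.
\begin{itemize}
\item On the fiber, $t=0$. Indeed, the third block gives $\bl{W}_{22}\bl{W}_{22}^\intercal+t\bl{I}_{p-r}=-\bl{W}_{21}\bl{W}_{21}^\intercal$. Near the fiber $\bl{W}_{11}$ is invertible, so the second block forces $\bl{W}_{21}=\bl{0}$ there. Since $\operatorname{rank}(\bl{W}_{22}\bl{W}_{22}^\intercal)\le k-r<p-r$, we get $t=0$ and $\bl{W}_{22}=\bl{0}$.
\item Hence the fiber in $\mathcal{L}^{p\times k}_{r,+}\times\R$ is the single point $(\bl{W}_{11}^0,\bl{0},\bl{0},0)$, where $\bl{W}_{11}^0$ is the Cholesky factor of $\bl{D}_1$.
\item Near this point $\bl{W}_{11}$ is invertible, so the entries of $\bl{W}_{21}\bl{W}_{11}^\intercal$ generate the same ideal as the entries of $\bl{W}_{21}$.
\item Modulo $\langle\bl{W}_{21}\rangle$, the third block reduces to $\bl{W}_{22}\bl{W}_{22}^\intercal+t\bl{I}_{p-r}$.
\item The map $\bl{W}_{11}\mapsto \bl{A}=\bl{W}_{11}\bl{W}_{11}^\intercal+t\bl{I}_r-\bl{D}_1$ is an analytic isomorphism for fixed small $t$, because the Cholesky map is a diffeomorphism onto positive definite matrices. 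Together with $t$ this gives a local coordinate change, to which we apply Fact~\ref{fact:chain-rule}; its Jacobian is a unit.
\end{itemize}
The ideal thus becomes $\langle\bl{A}\rangle+\langle\bl{W}_{21}\rangle+\mathcal{J}$, where $\mathcal{J}=\langle\bl{W}_{22}\bl{W}_{22}^\intercal+t\bl{I}_{p-r}\rangle$ lives in the variables $(\bl{W}_{22},t)$. The first two summands define a linear space of codimension $r(r+1)/2+r(p-r)$. By Fact~\ref{fact:smooth-RLCT} they contribute $(r(r+1)/2+r(p-r),1)$, and the sum rule (Fact~\ref{fact:sum-rule}) adds this to $\rlct_0(\mathcal{J};1)$.

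The main step is showing $\rlct_0(\mathcal{J};1)=(qm+2,1)$ for $q=p-r>m=k-r$, where $\bl{W}_{22}\in\R^{q\times m}$. I would use the weighted blow-up of the origin giving $\bl{W}_{22}$ weight $1$ and $t$ weight $2$. On the charts $t=\pm u^2$, $\bl{W}_{22}=u\bl{W}'$ (and similarly the charts $w_{ij}=\pm u$ with the remaining coordinates scaled by $u$, resp.\ $u^2$ for $t$), the following hold.
\begin{itemize}
\item Every generator pulls back to $u^2$ times the corresponding entry of $\bl{W}'\bl{W}'^\intercal+t'\bl{I}_q$.
\item The Jacobian determinant is $u^{qm+1}$ times a unit.
\item The strict transform $\langle\bl{W}'\bl{W}'^\intercal+t'\bl{I}_q\rangle$ has no zeros on the exceptional locus. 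By the rank argument above, a zero would require $t'=0$ and $\bl{W}'=\bl{0}$, which the chart normalization excludes. So this ideal is the unit ideal locally.
\end{itemize}
The zeta function therefore reduces to $\int |u|^{-z}|u|^{qm+1}\,\mathrm{d}u$ locally, with smallest pole $qm+2$ of multiplicity $1$, on every chart.

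Adding the two contributions gives
\[
\tfrac{r(r+1)}{2}+r(p-r)+(p-r)(k-r)+2=pk+r(p-k+1)+2 ,
\]
with multiplicity $1+1-1=1$, as claimed. The delicate points I expect are, first, checking that the coordinate changes are legitimate analytic isomorphisms in the precise sense of Fact~\ref{fact:chain-rule}. Second, I need to handle the weighted (rather than ordinary) blow-up, ideally by realizing it as an ordinary blow-up composed with a monomial substitution, or by a direct polar-type computation of the zeta integral.
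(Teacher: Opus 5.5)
\textbf{There is a factor-of-two error: the proposal proves the wrong value.} Your decomposition is sound, but the one non-smooth step is mis-normalized. The paper's convention is $\zeta(z)=\int(\sum_i f_i^2)^{-z/2}$; this is the convention under which Fact~\ref{fact:smooth-RLCT} gives $(c,1)$, and you use it correctly for the $\bl{A}$, $\bl{W}_{21}$ piece. Under it, generators of the form $u^2\cdot(\text{unit})$ give $(\sum_i f_i^2)^{-z/2}=|u|^{-2z}\cdot(\text{unit})$, not $|u|^{-z}$. So each chart contributes $\int |u|^{-2z+qm+1}\,\mathrm{d}u$, whose first pole is $z=\frac{qm+2}{2}$. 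Hence $\rlct_0(\mathcal{J};1)=\bigl(\frac{(p-r)(k-r)+2}{2},1\bigr)$.

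Your value $qm+2$ cannot be right. Near a zero $x$ one has $\sum_i f_i^2\le C\|\omega-x\|^2$, so a local RLCT never exceeds the number of variables, here $qm+1$. The same slip appears in your target $(pk+r(p-k+1)+2,1)$. That is twice the value in the statement, and it exceeds the number $pk+1$ of parameters for every $r$. Your final displayed identity is also false for $r\ge 1$: its left-hand side equals $pk-rk+\frac{r(r+1)}{2}+2$. With the corrected contribution, the sum rule gives
\[
\frac{r(r+1)}{2}+r(p-r)+\frac{(p-r)(k-r)+2}{2}=\frac{pk+r(p-k+1)+2}{2}
\]
with multiplicity $1+1-1=1$, which is exactly the statement.

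\textbf{Comparison with the paper once this is fixed.} Your argument is then essentially the paper's proof: diagonalize, restrict to $\mathcal{L}^{p\times k}_{r,+}$, split off $\bl{W}_{21}$ with the sum rule, and resolve $\langle\bl{W}_{22}\bl{W}_{22}^\intercal+t\bl{I}_{p-r}\rangle$. Your weighted blow-up of $(\bl{W}_{22},t)$ with weights $(1,2)$ is what the paper gets by substituting $\tau=\sigma^2$ (Jacobian $2\sigma$) and then doing an ordinary blow-up of $(\bl{W}_{22},\sigma)$.

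Two of your details are slightly cleaner than the paper's:
\begin{itemize}
\item Your coordinate $\bl{A}$ removes $t$ from the $\bl{W}_{11}$-block before the blow-up. The paper instead eliminates the $\sigma^2\bl{I}_r$ terms chart by chart afterwards.
\item Your charts $t=\pm u^2$ also cover the side $\sigma^2<s$, which the paper's substitution $\tau=\sigma^2$ does not parametrize. There the rank bound $k-r<p-r$ is what makes the strict transform a unit.
\end{itemize}

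On the weighted blow-up you flagged: its $t$-charts are $2\!:\!1$, so restrict to $u>0$; a finite cover does not move poles. Alternatively, argue directly. The function $\|\bl{W}_{22}\bl{W}_{22}^\intercal+t\bl{I}_{p-r}\|_F^2$ is weighted homogeneous of degree $4$ and, by your rank argument, positive off the origin. It is therefore bounded above and below by constant multiples of $\|\bl{W}_{22}\|^4+t^2$, and weighted polar coordinates give the pole $\frac{qm+2}{2}$ with multiplicity $1$.
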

\begin{proof}
    Using Lemma~\ref{lem:diagonalization}, we may assume that our fixed covariance matrix is of the form $\bs{\Sigma}_0 = \diag(d_1,\ldots,d_r,0,\ldots,0) + c^2 \bl{I}_p$, where $d_1 \geq d_2 \geq \cdots \geq d_r >0$ and $c^2 > 0$. We write $\bl{D} = \diag(d_1,\ldots,d_r)$.
     Furthermore, by Lemma~\ref{lem:QR-decomposition}, we can restrict the parameters in $\bl{W}$ to the space $\mathcal{L}^{p\times k}_{r,+}$ of all real $p\times k$ matrices of the form
\[
\bl{W} = \begin{bmatrix}
    \bl{W}_{11} & \bl{0} \\
    \bl{W}_{21} & \bl{W}_{22}
\end{bmatrix},
\]
where $\bl{W}_{11}$ is an $r \times r$ lower triangular matrix with positive diagonal entries. 
Now, the relevant fiber ideal is the ideal of real analytic functions on $\mathcal{L}^{p\times k}_{r,+} \times \R_{>0}$ generated by the entries of $\bl{W}\bl{W}^\intercal + \sigma^2 \bl{I}_p - \bs{\Sigma}_0$. As we put the prior distribution on $\sigma^2$, we have to treat it as a variable $\tau = \sigma^2$ and write $\bl{W}\bl{W}^\intercal + \tau \bl{I}_p - \bs{\Sigma}_0$. We first apply the transformation $\tau \mapsto \tau + c^2$ and then change to $\sigma$-coordinates by $\sigma \mapsto \sigma^2 = \tau$. The composition of these two maps yields a Jacobian determinant of $2\sigma$, and the pullback of the ideal is generated by the entries of
\[
\begin{aligned}
&\bl{W}\bl{W}^\intercal + \sigma^2\bl{I}_p - \diag(d_1,\ldots,d_r,0,\ldots,0) \\
&\qquad =
\begin{bmatrix}
    \bl{W}_{11}\bl{W}_{11}^\intercal + \sigma^2\bl{I}_r - \bl{D} & \bl{W}_{11}\bl{W}_{21}^\intercal \\
    \bl{W}_{21}\bl{W}_{11}^\intercal & \bl{W}_{21}\bl{W}_{21}^\intercal + \bl{W}_{22}\bl{W}_{22}^\intercal + \sigma^2\bl{I}_{p-r}
\end{bmatrix}.
\end{aligned}
\]
Leaving all other variables unchanged while applying the transformation $\bl{W}_{21} \mapsto \bl{W}_{21}\bl{W}_{11}^\intercal$, where we note that $\bl{W}_{11}$ is invertible, yields the pullback of the ideal
\[
\begin{bmatrix}
    \bl{W}_{11}\bl{W}_{11}^\intercal + \sigma^2\bl{I}_r - \bl{D} & \bl{W}_{21}^\intercal \\
    \bl{W}_{21} & \bl{W}_{21}\bl{W}_{11}^{-T}\bl{W}_{11}^{-1}\bl{W}_{21}^\intercal + \bl{W}_{22}\bl{W}_{22}^\intercal + \sigma^2\bl{I}_{p-r}
\end{bmatrix}.
\]
Since every entry of $\bl{W}_{21}$ is now a generator of the ideal, we can eliminate the terms coming from $ \bl{W}_{21}\bl{W}_{11}^{-T}\bl{W}_{11}^{-1}\bl{W}_{21}^\intercal$ in the lower right block of generators. So, the entries of the following matrix generate the same ideal:
\[
\begin{bmatrix}
    \bl{W}_{11}\bl{W}_{11}^\intercal + \sigma^2\bl{I}_r - \bl{D} & \bl{W}_{21}^\intercal \\
    \bl{W}_{21} &  \bl{W}_{22}\bl{W}_{22}^\intercal + \sigma^2\bl{I}_{p-r}
\end{bmatrix}.
\]
Using the sum rule (Fact~\ref{fact:sum-rule}), we can disregard the generators that are entries of $\bl{W}_{21}$ by adding $((p-r)r,0)$ to the RLCT in the end. Hence, we are left with $\mathcal{I} = \mathcal{I}_1 + \mathcal{I}_2$, where
\begin{align*}
    \mathcal{I}_1 &= \langle \bl{W}_{11}\bl{W}_{11}^\intercal + \sigma^2 \bl{I}_r - \bl{D} \rangle, \\
    \mathcal{I}_2 &= \langle \bl{W}_{22}\bl{W}_{22}^\intercal + \sigma^2 \bl{I}_{p-r}\rangle.
\end{align*}

We perform the blow-up along the linear subspace defined by $\bl{W}_{22} = 0$ and $\sigma= 0$. We have to distinguish the charts associated to $\sigma$ and $w_{pk}$. This is sufficient because the generators of $\mathcal{I}$ are symmetric with respect to variables in $\bl{W}_{22}$. 

We first consider the $\sigma$-chart whose Jacobian determinant is $2\sigma^{(p-r)(k-r)+1}$, taking into account the Jacobian term of $2\sigma$ from before. The pullback of $\mathcal{I}_1$ remains unchanged while the pullback of $\mathcal{I}_2$ is $\sigma^2\langle \bl{W}_{22}\bl{W}_{22}^\intercal + \bl{I}_{p-r}\rangle $. The second factor contains the generator $\left(\sum_{\ell = 1}^k w_{p\ell}^2\right) + 1$ which does not vanish for any real values of the $w_{p\ell}$. So, this is a unit as an analytic function and the pullback of $\mathcal{I}$ under the $\sigma$-chart is $\langle \bl{W}_{11}\bl{W}_{11}^\intercal + \sigma^2 \bl{I}_r - \bl{D}\rangle  + \langle \sigma^2 \rangle = \langle \bl{W}_{11}\bl{W}_{11}^\intercal - \bl{D}\rangle + \langle \sigma^2 \rangle$. After rescaling the $\ell$-th row of $\bl{W}_{11}$ by $\sqrt{d_\ell} >0$ for $\ell \in \{1,\ldots, r\}$, we arrive at the ideal $\langle \bl{W}_{11}\bl{W}_{11}^\intercal - \bl{I}_r\rangle + \langle \sigma^2 \rangle$, where $\bl{W}_{11}$ still varies over the lower triangular $r \times r$ matrices with positive diagonal entries. The only real zero of this ideal is $\bl{W}_{11} = \bl{I}_r$, $\sigma = 0$. Using the sum rule, we can disregard $\langle \sigma^2 \rangle$ by adding $\left(\frac{(p-r)(k-r)+2}{2},0\right)$ to the RLCT in the end, which comes from $\rlct_0(\langle \sigma^2\rangle ;2\sigma^{(p-r)(k-r)+1})$. 
So, we are left with computing $\rlct_{\bl{I}_r}(\langle \bl{W}_{11}\bl{W}_{11}^\intercal - \bl{I}_r\rangle; 1)$. This ideal defines a smooth real analytic variety of codimension $\frac{r(r+1)}{2}$ and, hence, its RLCT is $\left( \frac{r(r+1)}{2}, 1 \right)$, see Fact~\ref{fact:smooth-RLCT}. Consequently, we compute the RLCT of the pullback of $\mathcal{I}$ on the $\sigma$-chart as
\[
\left(\frac{r(r+1)}{2} + \frac{(p-r)(k-r)+2}{2}, 1  \right).
\]

Finally, we consider the chart for $w_{pk}$. The Jacobian determinant is $2w_{pk}^{(p-r)(k-r)+1} \sigma$. Factoring out $w_{pk}^2$ in the pullback of $\mathcal{I}_2$, the cofactor contains the generator $ \left(\sum_{\ell = 1}^{k-1} w_{p\ell}^2\right) + 1 + \sigma^2$ which is again a unit. Using the remaining factor $(w_{pk}^2)$, we can eliminate the $w_{pk}^2\sigma^2 \bl{I}_r$ terms in the pullback of $\mathcal{I}_1$. Again, this pullback defines a smooth real analytic variety of codimension $\frac{r(r+1)}{2}$. Hence, using the sum rule, the RLCT of the pullback of $\mathcal{I}$ on this chart is the same as on the $\sigma$-chart.
 Adding the contribution of $\bl{W}_{21}$, we see that the RLCT of the fiber ideal of the PPCA model is
\[
\left((p-r)r + \frac{r(r+1)}{2} + \frac{(p-r)(k-r)+2}{2}, 1  \right) = \left(\frac{pk + r(p-k+1) + 2}{2}, 1  \right).
\]
The statement on learning coefficients follows immediately from Fact~\ref{fact:fiber-ideal}.
\end{proof}

As a partial confirmation of formula \eqref{eqn:LearningCoeffExpression} for the learning coefficient we note that when $\bs{\Sigma}_0 \in \mc{M}_{k} \setminus \mc{M}_{k-1}$ is a non-singular point in the PPCA model with $k$ principal components the learning coefficient is $\lambda_{kk} = \tfrac{1}{2}(k(p-k) + k(k+1)/2 + 1)$. The learning coefficient in this case is one half of the dimension of the manifold $\mc{M}_k \setminus \mc{M}_{k-1}$, which is the usual BIC based penalty and is typical for non-singular points \citep[Chap.~6]{WatanabeSingularLearningBook}.

\begin{remark}
Rather than assuming a zero mean, a more general PPCA model has $\bl{X}_i \overset{i.i.d.}{\sim} \mc{N}_p(\bs{\mu},\bs{\Sigma}_0)$ with unknown mean $\bs{\mu} \in \mb{R}^p$ and $\bs{\Sigma}_0 \in \mc{M}_r \setminus \mc{M}_{r-1}$. The only modification needed in this setting is that $\tfrac{p}{2}$ must be added to every learning coefficient so that $\lambda_{kr} = \tfrac{1}{4}(pk + r(p-k+1) + 2 + 2p)$ with multiplicity $\mf{m}_{kr} = 1$. This is a consequence of the sum rule with respect to the ideals $\mathcal{I}_{p,k}(\bs{\Sigma}_0) = \langle \bl{W}\bl{W}^\intercal + \sigma^2\bl{I}_p - \bs{\Sigma}_0 \rangle$ and $\mc{J}_p(\bs{\mu}_0) = \langle \bs{\mu} - \bs{\mu}_0 \rangle$, as well as the fact that $\rlct_{\mb{R}^p}(\mathcal{J}_p(\bs{\mu}_0);1) = (p,1)$. Adding the constant $\tfrac{p}{2}$ to every learning coefficient will simply result in adding $-\tfrac{p\log(n)}{2}$ to every sBIC term. Model selection with the sBIC is therefore essentially the same whether one assumes that the mean is zero or not; the only modification needed is that the value of the maximized log-likelihood differs in these two cases.  
\end{remark}

\begin{remark}\label{rmk:singularity-structure}
    Theorem~\ref{thm:formula-learning-coeff} and its proof imply that the learning coefficients and, more generally, the real singularity types of the $p$-dimensional PPCA model with $k$ principal components are independent of the choice of $\bs{\Sigma}_0 \in \mc{M}_r \setminus \mc{M}_{r-1}$, and only change when $\bs{\Sigma}_0$ resides in the model with $r-1$ principal components. This is a special property of the PPCA model; compare, for instance,~\cite[Sec.~5]{drton2025Factor}.
\end{remark}

\section{Partitioned noise PPCA models}
\label{sec:UnifyPPCAandFA}

We introduce a class of models that is a common generalization of factor analysis and PPCA. As before, we consider a $p$-dimensional Gaussian with mean $\bl{0}$ and a covariance matrix that is parameterized by the function
\begin{align*}
    \varphi_{k,\bl{d}}: \R^{p\times k} \times \R_{>0}^s &\to \mathcal{S}^p_{++}\\
    (\mathbf{W},\bs{\psi}) &\mapsto \bl{W}\bl{W}^\intercal + \diag( \underbrace{\psi_1,\ldots,\psi_1}_{d_1},
        \ldots,
        \underbrace{\psi_s,\ldots,\psi_s}_{d_s}),
\end{align*}
where $0 \leq k \leq p$ and $\bl{d} = (d_1,\ldots,d_s) \in \Z_{>0}^s$ with $\sum_{i = 1}^s d_i = p$ is an (ordered) integer partition of $p$. We denote the image of $\varphi_{k,\bl{d}}$ by $\mathcal{M}_{k,\bl{d}}$ and call it the \emph{$\bl{d}$-partitioned noise PPCA model with $k$ principal components}. 
Note that the cases $s = 1$ and $s = p$ correspond to the PPCA and factor analysis models respectively. In particular, $\mathcal{M}_{k,(p)} = \mathcal{M}_k$ and $\mathcal{M}_k$ is a submodel of the factor analysis model $\mc{F}_k = \mathcal{M}_{k,\underbrace{(1,\ldots,1)}_p}$. Figure \ref{fig:HasseDiagramPPCAFA} displays the relationship between the PPCA and factor analysis models when $p = 6$. Arrows in the diagram represent model containment and point from smaller models to larger ones. The models $\mc{M}_5$ and $\mc{F}_5$ parameterize the entire space of positive definite matrices. In fact, the factor analysis model $\mc{F}_k$ generally equals the entire set of positive definite matrices for a number of components $k$ that is smaller than $p-1$; the codimension of $\mc{F}_3$ is zero when $p = 6$ \citep[Thm.~2]{AlgFactorAnalysisDrton}; in this case we say that the factor analysis model is saturated. Nonetheless, even if two models parameterize the same set of probability distributions, the parameterization remains relevant for asymptotics, as was mentioned in Section \ref{sec:PPCAGeometry}.

\begin{figure}[t]
    \centering
    \begin{tikzpicture}[
        >=Stealth, 
        every node/.style={
            circle, 
            draw=black, 
            thick, 
            minimum size=10mm,  
            text width=7mm,     
            align=center        
        },
        directed edge/.style={
            ->, 
            thick, 
            shorten >=1pt, 
            shorten <=1pt
        }
    ]
        \foreach \i in {0,...,5} {
            \node (T\i) at (\i*2.2, 2) {$\mathcal{F}_{\i}$};
        }
        
        \foreach \i in {0,...,5} {
            \node (B\i) at (\i*2.2, 0) {$\mathcal{M}_{\i}$};
        }

        \foreach \i in {0,...,5} {
            \draw[directed edge] (B\i) -- (T\i);
            
            \ifnum\i<5
                \draw[directed edge] (T\i) -- (T\the\numexpr\i+1\relax);
                \draw[directed edge] (B\i) -- (B\the\numexpr\i+1\relax);
            \fi
        }
    \end{tikzpicture}
    \caption{Model containment between the PPCA ($\mc{M}_i$) and factor analysis ($\mc{F}_i$) models when $p = 6$. Arrows point from smaller to larger models.}
    \label{fig:HasseDiagramPPCAFA}
\end{figure}

More generally, for $\bl{e} = (e_1,\ldots,e_t) \in \Z_{>0}^t$, there is an inclusion of non-saturated models  $\mathcal{M}_{r,\bl{e}} \subseteq \mathcal{M}_{k,\bl{d}}$ if and only if $r \leq k$ and the noise partition $\bl{d}$ is a refinement of $\bl{e}$.
More formally, the latter is equivalent to $r \leq k$ and
the existence of $1 = i_1 < \cdots < i_{t+1} = s+1$ such that for all $j \in \{1,\ldots,t\}$, we have $e_j = d_{i_j} + \ldots + d_{i_{j+1} - 1}$.
In this case, we write $\bl{d} \preceq \bl{e}$, so that $\mathcal{M}_{r,\bl{e}} \subseteq \mathcal{M}_{k,\bl{d}}$ if and only if $r \leq k$ and $\bl{d} \preceq \bl{e}$.

\begin{theorem}\label{thm:generalized-formula-learning-coeff}
Let $\bl{d} \preceq \bl{e}$ be integer partitions of $p\ge 1$, and let $0 \leq r \leq k\leq p-1$. Assume that $\bs{\Sigma}_0$ is chosen generically from  $\mc{M}_{r,\bl{e}}$ and that $\mc{M}_{r,\bl{e}}$ is minimal among the partitioned noise PPCA models containing $\bs{\Sigma}_0$.
Let $\phi_k$ be any smooth and everywhere positive prior distribution on parameters $(\bl{W},\bs{\psi}) \in \R^{p\times k} \times (\R_{>0})^s$. Then the fiber ideal $\mathcal{I}_{p,k,\bl{d}}(\bs{\Sigma}_0)$ of the $p$-dimensional $\bl{d}$-partitioned noise PPCA model with $k$ principal components generated by the entries of 
\[
\bl{W}\bl{W}^\intercal + \diag( \underbrace{\psi_1,\ldots,\psi_1}_{d_1},
        \ldots,
        \underbrace{\psi_s,\ldots,\psi_s}_{d_s}) - \bs{\Sigma}_0
\]
has global RLCT
\[
\min_{(\bl{W},\bs{\psi}) \in \R^{p\times k} \times (\R_{>0})^s}\rlct_{(\bl{W},\bs{\psi})}(\mathcal{I}_{p,k,\bl{d}}(\bs{\Sigma}_0); \phi_k) = \left(\frac{pk + r(p-k+1) + 2s}{2}, 1  \right).
\]
In particular, the learning coefficient and its multiplicity of this model $\mc{M}_{k,\bl{d}}$ along the submodel $\mc{M}_{r,\bl{e}}$ are
\begin{align}
    \label{eqn:LearningCoeffExpressionGeneral}
    \lambda_{kr}^{\bl{d}\bl{e}} = \frac{pk + r(p-k+1) + 2s}{4} \quad \text{and} \quad \mf{m}_{kr}^{\bl{d}\bl{e}} = 1.
\end{align}
\end{theorem}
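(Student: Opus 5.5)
We plan to reduce the partitioned noise statement to the same blow-up computation used for Theorem~\ref{thm:formula-learning-coeff-rlct}, working block by block along the coarser partition $\bl{e}$. By Fact~\ref{fact:fiber-ideal} (whose proof via \cite[Thm.~2]{Lin2017Ideal} applies verbatim to $\varphi_{k,\bl{d}}$) it suffices to compute the global RLCT of $\mathcal{I}_{p,k,\bl{d}}(\bs{\Sigma}_0)$ with prior $1$. For generic $\bs{\Sigma}_0\in\mc{M}_{r,\bl{e}}$, write $\bs{\Sigma}_0=\bl{L}\bl{L}^\intercal+\bs{\Psi}_0$ with $\bl{L}\in\R^{p\times r}$ of full rank and $\bs{\Psi}_0$ constant on the blocks of $\bl{e}$. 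Minimality of $\mc{M}_{r,\bl{e}}$ and genericity are meant to ensure the following. First, the fiber consists (up to $\mathrm{O}(k)$ acting on $\bl{W}$) of points with $\bl{W}=[\bl{L}\;\bl{0}]$ and $\bs{\psi}$ equal to the $\bl{e}$-block values of $\bs{\Psi}_0$. Second, the decomposition is identifiable, which needs $r$ below the Ledermann-type bound for $\mc{M}_{r,\bl{e}}$, implicit in non-saturation. The first step is therefore to establish this fiber description, using the identifiability of generic factor-analysis-type decompositions.

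The second step is a local change of coordinates that splits the ideal. At a fiber point, apply an analogue of Lemma~\ref{lem:QR-decomposition} to restrict $\bl{W}$ to the form $[\bl{W}_1\;\bl{W}_2]$ with $\bl{W}_1\in\R^{p\times r}$ near $\bl{L}$ and $\bl{W}_2\in\R^{p\times(k-r)}$ near $\bl{0}$, quotienting the $\mathrm{O}(r)$ symmetry by a lower-triangular normalization. Split $\bs{\psi}$ into the $t$ coordinates $\bs{\psi}^{\bl{e}}$ along $\bl{e}$ (for example, one representative $\psi_{i_j}$ per $\bl{e}$-block) and $s-t$ difference coordinates $\bs{\eta}$ measuring deviation within $\bl{e}$-blocks. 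The map $(\bl{W}_1,\bs{\psi}^{\bl{e}})\mapsto \bl{W}_1\bl{W}_1^\intercal+\diag(\bs{\psi}^{\bl{e}})$ is a submersion onto the smooth manifold $\mc{M}_{r,\bl{e}}$ near $\bs{\Sigma}_0$, with fibers of dimension $r(r-1)/2$ that the normalization removes. Hence we can choose coordinates in which the ideal becomes
\[
\langle \text{local coordinates of } \mc{M}_{r,\bl{e}} \text{ near } \bs{\Sigma}_0\rangle + \langle \bl{W}_2\bl{W}_2^\intercal + \diag(\bs{\eta}\text{-part}) \text{ projected to the normal space}\rangle.
\]
Mimicking the $\bl{W}_{21}\mapsto\bl{W}_{21}\bl{W}_{11}^\intercal$ elimination, the smooth part contributes $(\dim\mc{M}_{r,\bl{e}},0)$ by the sum rule and Fact~\ref{fact:smooth-RLCT}. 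Here $\dim\mc{M}_{r,\bl{e}} = pr - r(r-1)/2 + t$.

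The third step handles the remaining ideal, generated by the normal components of $\bl{W}_2\bl{W}_2^\intercal+\bs{\Delta}(\bs{\eta})$. Here $\bs{\Delta}(\bs{\eta})$ is diagonal, with entries vanishing on $\bl{e}$-representatives. As in the PPCA proof, blow up along $\{\bl{W}_2=0,\bs{\eta}=0\}$, which has codimension $(p-r)(k-r)+(s-t)$, contributing that amount plus the Jacobian exponent. The target count is
\[
\tfrac{pk+r(p-k+1)+2s}{2}-pr+\tfrac{r(r-1)}{2}-t = \tfrac{(p-r)(k-r)}{2}+(s-t).
\]
So we need the residual RLCT to equal half the codimension of $\bl{W}_2$ plus the full codimension of $\bs{\eta}$, mirroring the $\sigma$-chart computation $\frac{(p-r)(k-r)+2}{2}$.

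The main obstacle is the charts. In an $\bs{\eta}$-chart the pulled-back ideal is $\eta^2\langle\cdots\rangle$, and we must show the cofactor is a unit and that the leftover contributes exactly $\langle\eta\rangle$-type terms with the correct exponents. Genericity of $\bl{L}$ should be used here, to ensure the normal-space projection of the diagonal perturbations is injective. We would first verify that the symmetry argument reduces the charts to one $\bs{\eta}$-chart and one $w$-chart, and then check that in the $w$-chart the positive definite term $\bl{W}_2\bl{W}_2^\intercal$ again yields a unit. Throughout, multiplicity $1$ is tracked by noting that every chart gives a simple normal crossing with a unique minimal ratio. We would sanity-check the case $s=t=1$ against Theorem~\ref{thm:formula-learning-coeff-rlct}.
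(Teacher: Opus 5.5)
\paragraph{Step 1 is false, and this matters.}
Identifiability of $\bs{\Sigma}_0$ inside $\mc{M}_{r,\bl{e}}$ says nothing about decompositions of higher rank with the finer partition $\bl{d}$. So genericity and minimality do not make the fiber the $\mathrm{O}(k)$-orbit of $([\bl{L}\;\bl{0}],\bs{\psi}_0)$.

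Suppose some $\bl{d}$-block has size $d_\nu\le k-r$ (always true for factor analysis with $k>r$), and let $\bl{P}_\nu$ be the diagonal $0/1$ matrix selecting it. Then $\bl{L}\bl{L}^\intercal+\delta\bl{P}_\nu$ is positive semidefinite of rank at most $k$. Writing it as $\bl{W}\bl{W}^\intercal$ and lowering $\psi_\nu$ by a small $\delta>0$ gives a curve of fiber points through your base point with $\bl{W}\bl{W}^\intercal\neq\bl{L}\bl{L}^\intercal$.

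In your coordinates these are zeros of the residual ideal off the center $\{\bl{W}_2=0,\bs{\eta}=0\}$. Hence the strict transform cannot be a unit on the whole exceptional divisor. Positivity of $\bl{W}_2\bl{W}_2^\intercal$ does not rescue this: projecting onto the normal space of $\mc{M}_{r,\bl{e}}$ quotients out every $\bl{e}$-block projector, hence $\bl{I}_p$.

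For $s=t=1$ your conclusion survives for a different reason. Let $\bl{U}_\perp$ be an orthonormal basis of the orthogonal complement of the columns of $\bl{L}$ and $\tilde{\bl{W}}_2=\bl{U}_\perp^\intercal\bl{W}_2$. Then $\tilde{\bl{W}}_2\tilde{\bl{W}}_2^\intercal$ has rank $<p-r$, so it is never a nonzero multiple of $\bl{I}_{p-r}$. In that case your route is a neat alternative to the paper's $\sigma$-chart.

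\paragraph{The residual target can fail under your listed hypotheses.}
Take the factor analysis model $\mc{F}_2$ with $p=3$ at a generic point of the PPCA model $\mc{M}_1$, i.e.\ $\bl{d}=(1,1,1)$, $\bl{e}=(3)$, $r=1$, $k=2$. Here $\mc{M}_{r,\bl{e}}$ is smooth, identifiable and non-saturated. Yet the two $\bs{\eta}$-directions already fill its two-dimensional normal space. The differential of $\varphi_{k,\bl{d}}$ is onto the symmetric $3\times3$ matrices at every fiber point, so the learning coefficient is $3$, not the claimed $7/2$. Some non-saturation hypothesis involving $\mc{M}_{k,\bl{d}}$ itself, left implicit in the statement, has to be added, and your third step must use it.

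\paragraph{The blow-up in Step 3 is not set up correctly.}
The $\bs{\eta}$ enter linearly while $\bl{W}_2$ enters quadratically. On an $\bs{\eta}$-chart the pullback is therefore $\eta\langle\cdots\rangle$, not $\eta^2\langle\cdots\rangle$. On a $w$-chart the $\bs{\eta}$-terms carry one power of the exceptional coordinate against two for $\bl{W}_2\bl{W}_2^\intercal$, so no monomial-times-unit form arises.

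To match your target (full weight on $\bs{\eta}$), you should instead split the $\bs{\eta}$-generators off as smooth coordinates. This needs the $s$ matrices $\bl{U}_\perp^\intercal\bl{P}_\nu\bl{U}_\perp$ to be linearly independent. You would then have to prove that the quadratic ideal generated by $\tilde{\bl{W}}_2\tilde{\bl{W}}_2^\intercal$ modulo their span has RLCT exactly $\frac{(p-r)(k-r)}{2}$. This is where the branches above must be analyzed; in the $p=3$ example this quotient is zero.

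\paragraph{How the paper proceeds.}
The paper never splits off $\mc{M}_{r,\bl{e}}$. It rotates by $\bl{Q}$ to diagonalize $\bl{L}\bl{L}^\intercal$, shifts all $s$ noise parameters to $0$, and substitutes $\psi_\nu\mapsto\psi_\nu^2$ (Jacobian $\psi_1\cdots\psi_s$). The noise then enters through the quadratic positive semidefinite matrix $\bl{A}(\bs{\psi})$.

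After the QR normal form and the substitutions $\bl{W}_{21}\mapsto\bl{W}_{21}\bl{W}_{11}^\intercal$ and $\bl{W}_{21}\mapsto\bl{W}_{21}+A(\bs{\psi})_{21}$, one ordinary blow-up along $\{\bl{W}_{22}=0,\bs{\psi}=0\}$ gives unit cofactors. These come from positivity and the generic nonvanishing of $\sum_j q_{xj}^2$. The $s$ in the formula arises from the Jacobian exponent $(p-r)(k-r)+2s-1$, not from $t$ smooth plus $s-t$ difference directions.

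Be aware that $\psi_\nu\mapsto\psi_\nu^2$ only reaches the orthant where every shifted $\psi_\nu\ge 0$. The branches above have a lowered $\psi_\nu$, and in the $p=3$ example the drop from $7$ to $6$ comes from the complementary region. Whichever route you take, these points need an explicit argument.
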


\begin{remark}
    Note that $\lambda_{kr}^{\bl{d}\bl{e}}$ in Theorem~\ref{thm:generalized-formula-learning-coeff} is completely independent of $\bl{e}$ and only depends on the length $s$ of the partition $\bl{d}$.
\end{remark}

\begin{proof}[Proof of Theorem~\ref{thm:generalized-formula-learning-coeff}]
Using the assumption that $\bl{d} \preceq \bl{e}$, write $e_j = d_{i_j} + \ldots + d_{i_{j+1} - 1}$ with $1 = i_1 < \cdots  < i_{t+1} = s+1$ for $j \in \{1,\ldots,t\}$.
Moreover, write $\bs{\Sigma}_0 = \bl{L} \bl{L}^\intercal + \bl{D}$, where $\bl{L} \in \R^{p\times k}$ is of rank $r$ and 
\[
\bl{D} =  \diag( \underbrace{\alpha_1,\ldots,\alpha_1}_{e_1},
        \ldots,
        \underbrace{\alpha_t,\ldots,\alpha_t}_{e_t}).
\]
Note that $\bl{L}\bl{L}^\intercal$ is a symmetric positive semidefinite $p\times p$ matrix of rank $r$. Consequently, there exists an orthogonal matrix $\bl{Q} \in \text{O}(p)$ such that $\bl{E} := \bl{Q}\bl{L}\bl{L}^\intercal \bl{Q}^\intercal = \diag(\varepsilon_1,\ldots,\varepsilon_r,0,\ldots,0)$ with all $\varepsilon_i$ in $\R_{>0}$.
To the matrix
\[
\bl{W}\bl{W}^\intercal + \diag( \underbrace{\psi_1,\ldots,\psi_1}_{d_1},
        \ldots,
        \underbrace{\psi_s,\ldots,\psi_s}_{d_s}) - \bs{\Sigma}_0
\]
of generators of the fiber ideal we apply $\bl{Q}$ on the left and $\bl{Q}^\intercal$ on the right. As $\bl{Q}$ is invertible, the entries of this new matrix generate the same ideal. We now apply the change of variables $\bl{W} \mapsto \bl{Q}^\intercal \bl{W}$ that keeps the $\bs{\psi}$-variables unchanged. Note that this is indeed a real analytic isomorphism, again because $\bl{Q}$ is invertible, and the resulting matrix of generators is
\begin{equation}\label{eq:Q-change}
    \bl{W}\bl{W}^\intercal + \bl{Q}\cdot \diag( \underbrace{\psi_{\nu} - \alpha_\mu, \ldots, \psi_\nu - \alpha_\mu}_{d_\nu} \mid 1 \leq \mu \leq t, i_\mu \leq \nu < i_{\mu+1}) \cdot \bl{Q}^\intercal - \bl{E}.
\end{equation}
For $1 \leq \mu \leq t$ and $i_\mu \leq \nu < i_{\mu+1}$, we apply the real analytic isomorphism $\psi_\nu \mapsto \psi_\nu - \alpha_\mu$ that keeps the $\bl{W}$-variables fixed. This pulls (\ref{eq:Q-change}) back to
\begin{equation}\label{eq:psi-change}
    \bl{W}\bl{W}^\intercal + \bl{Q}\cdot \diag(\underbrace{\psi_1,\ldots,\psi_1}_{d_1},
        \ldots,
        \underbrace{\psi_s,\ldots,\psi_s}_{d_s} ) \cdot \bl{Q}^\intercal - \bl{E}.
\end{equation}
The composition of the transformations $\psi_\nu \mapsto \psi_\nu^2$ is a real analytic isomorphism away from the origin in $\bs{\psi}$-space and its Jacobian determinant is $\psi_1\cdots\psi_s$, up to multiplication by the constant $2^s$ which we can disregard when computing the RLCT by Fact~\ref{fact:chain-rule}. We define
\[
\bl{A}(\bs{\psi}) =  \bl{Q}\cdot \diag(\underbrace{\psi_1^2,\ldots,\psi_1^2}_{d_1},
        \ldots,
        \underbrace{\psi_s^2,\ldots,\psi_s^2}_{d_s} ) \cdot \bl{Q}^\intercal.
\]
Writing $d_{<i} \coloneqq d_1 + \ldots + d_{i-1}$ for the total size of the first $i-1$ blocks, with $d_{<1} \coloneqq 0$, so that the $i$th block of the partition $\bl{d}$ consists of the indices $d_{<i}+1,\ldots,d_{<i}+d_i$, the entry $\bl{A}(\bs{\psi})_{xy}$ of this matrix is a homogeneous quadratic polynomial in $\psi_1,\ldots,\psi_s$ whose coefficient for $\psi_i^2$ is
\[
\sum_{j = d_{<i}+1}^{d_{<i}+d_i} q_{xj}q_{yj}.
\]
According to an analogous variant of Lemma~\ref{lem:QR-decomposition}, we can assume that 
\[
\bl{W} = \begin{bmatrix}
    \bl{W}_{11} & \bl{0} \\
    \bl{W}_{21} & \bl{W}_{22}
\end{bmatrix},
\]
where $\bl{W}_{11}$ is a lower triangular $r \times r$ matrix whose diagonal entries vary over the positive real numbers. Also write
\[
\bl{A}(\bs{\psi}) = \begin{bmatrix}
    A(\bs{\psi})_{11} & A(\bs{\psi})_{21} \\
    A(\bs{\psi})_{21} & A(\bs{\psi})_{22}
\end{bmatrix},
\]
where $A(\bs{\psi})_{11}$ is an $r\times r$ matrix. With these conventions, (\ref{eq:psi-change}) simplifies to
\begin{equation*}\label{eq:after-QR}
\begin{bmatrix}
    \bl{W}_{11}\bl{W}_{11}^\intercal +  A(\bs{\psi})_{11} - \diag(\varepsilon_1,\ldots,\varepsilon_r) & \bl{W}_{11}\bl{W}_{21}^\intercal + A(\bs{\psi})_{21}^\intercal \\
    \bl{W}_{21}\bl{W}_{11}^\intercal + A(\bs{\psi})_{21}  &  \bl{W}_{21}\bl{W}_{21}^\intercal+ \bl{W}_{22}\bl{W}_{22}^\intercal + A(\bs{\psi})_{22}
\end{bmatrix}.
\end{equation*}
Next, we compute the pullback of this matrix of generators under $\bl{W}_{21} \mapsto \bl{W}_{21}\bl{W}_{11}^\intercal$, which is a real analytic isomorphism as $\bl{W}_{11}$ is invertible, and then pull it back via the isomorphism $\bl{W}_{21} \mapsto \bl{W}_{21} + A(\bs{\psi})_{21}$. This gives the following matrix of generators
\begin{footnotesize}
\begin{equation}\label{eq:after-linear-shift}
\begin{bmatrix}
    \bl{W}_{11}\bl{W}_{11}^\intercal +  A(\bs{\psi})_{11} - \diag(\varepsilon_1,\ldots,\varepsilon_r) & \bl{W}_{21}^\intercal  \\
    \bl{W}_{21}   &  (\bl{W}_{21} - A(\bs{\psi})_{21})\bl{W}_{11}^{-\intercal}\bl{W}_{11}^{-1}(\bl{W}_{21}^{\intercal} - A(\bs{\psi})_{21}^{\intercal}) + \bl{W}_{22}\bl{W}_{22}^\intercal + A(\bs{\psi})_{22}
\end{bmatrix}.
\end{equation}
\end{footnotesize}
The variables in $\bl{W}_{21}$, which are now generators of the ideal, can be used to eliminate the summands involving these variables in the lower right block of the matrix in (\ref{eq:after-linear-shift}) which yields
\begin{equation*}\label{eq:elim-W_21}
\begin{bmatrix}
    \bl{W}_{11}\bl{W}_{11}^\intercal +  A(\bs{\psi})_{11} - \diag(\varepsilon_1,\ldots,\varepsilon_r) & \bl{W}_{21}^\intercal  \\
    \bl{W}_{21}   &   A(\bs{\psi})_{21}\bl{W}_{11}^{-\intercal}\bl{W}_{11}^{-1} A(\bs{\psi})_{21}^{\intercal} + \bl{W}_{22}\bl{W}_{22}^\intercal + A(\bs{\psi})_{22}
\end{bmatrix}.
\end{equation*}
With regards to the sum rule (Fact~\ref{fact:sum-rule}) we can now disregard the $\bl{W}_{21}$-variables as generators by adding $(p-r)r$ to the first component of the RLCT in the end. This leaves us with computing the RLCT of $\mathcal{I}_1 + \mathcal{I}_2$, where
\[
\mathcal{I}_1 = \langle \bl{W}_{11}\bl{W}_{11}^\intercal +  A(\bs{\psi})_{11} - \diag(\varepsilon_1,\ldots,\varepsilon_r) \rangle
\]
and
\[
\mathcal{I}_2 = \langle A(\bs{\psi})_{21}\bl{W}_{11}^{-\intercal}\bl{W}_{11}^{-1} A(\bs{\psi})_{21}^{\intercal} + \bl{W}_{22}\bl{W}_{22}^\intercal + A(\bs{\psi})_{22} \rangle.
\]
The Jacobian term $\psi_1 \cdots \psi_s$ must also be incorporated into the computation.
We write the entry of the matrix $A(\bs{\psi})_{21}\bl{W}_{11}^{-\intercal}\bl{W}_{11}^{-1} A(\bs{\psi})_{21}^{\intercal}$ at position $(x,y)$ as $f_{xy}$. This is a homogeneous polynomial of degree $4$ in the $\bs{\psi}$-variables and it is independent of $\bl{W}_{22}$. Moreover, the diagonal entries of $\bl{W}_{11}^{-\intercal}\bl{W}_{11}^{-1}$ are rational functions that are sums of products of squares of entries of $\bl{W}_{11}$. Consequently $f_{xx}$ can take only non-negative values.

We now consider the blowup of $(\bl{W}_{11},\bl{W}_{22},\bs{\psi})$-space along the linear subspace defined by $\bl{W}_{22} = 0$ and $\bs{\psi} = 0$. The generators of both $\mathcal{I}_1$ and $\mc{I}_2$ are completely symmetric in the $\bl{W}_{22}$-variables and in the $\bs{\psi}$-variables, respectively. So, without loss of generality, we can just consider the charts corresponding to $\psi_s$ and $w_{pk}$. Starting with the $\psi_s$-chart and taking into account the Jacobian term from before, its Jacobian determinant is $\psi_1 \cdots \psi_{s-1}\cdot \psi_s^{(p-r)(k-r) + 2s - 1}$. The diagonal generators of $\mathcal{I}_2$ are of the form
\[
f_{xx} + \sum_{\ell = r+1}^k w_{x\ell}^2 + \sum_{i = 1}^s \psi_i^2 \cdot \left( \sum_{j = d_{<i}+1}^{d_{<i}+d_i} q_{xj}^2 \right), \quad r+1 \leq x \leq p.
\]
Their pullbacks under this chart are
\begin{equation}\label{eq:pullback-gen}
    \psi_s^2 \cdot \left( g_{xx} + \sum_{\ell = r+1}^k w_{x\ell}^2 + \sum_{i = 1}^{s-1} \psi_i^2 \cdot \left( \sum_{j = d_{<i}+1}^{d_{<i}+d_i} q_{xj}^2 \right) +  \sum_{j = d_{<s}+1}^{d_{<s}+d_s} q_{xj}^2  \right),
\end{equation}
where $g_{xx}$ is still a sum of squares in the sense we described above for $f_{xx}$. As $\bs{\Sigma}_0$ is chosen generically, we can pick $\bl{L}$ and, hence, $\bl{Q}$ generically. Consequently, we may assume that $\sum_{j = d_{<s}+1}^{d_{<s}+d_s} q_{xj}^2 \neq 0$ for some $x$; that is, this term is actually strictly larger than $0$. This makes the right factor in (\ref{eq:pullback-gen}) a unit in the ring of analytic functions. It follows that the pullback of $\mathcal{I}_2$ under the $\psi_s$-chart is just $\langle \psi_s^2 \rangle$.

Every summand of a generator of $\mathcal{I}_1$ containing a $\bs{\psi}$-variable has $\psi_s^2$-factor after pulling back. We can eliminate these using the generator $\psi_s^2$ of the pullback of $\mathcal{I}_2$, which makes the variables in the pullbacks of the two ideals disjoint, and so we can apply the sum rule (Fact~\ref{fact:sum-rule}). Explicitly, the pullback of $\mathcal{I}_1$ is generated by the entries of $\bl{W}_{11}\bl{W}_{11}^\intercal- \diag(\varepsilon_1,\ldots,\varepsilon_r)$. This defines a smooth analytic manifold of codimension $r(r+1)/2$ in $\bl{W}_{11}$-space, and hence its RLCT is $(r(r+1)/2, 1)$. The pullback of $\mathcal{I}_2$ has
\[
\rlct_0\left(\psi_s^2; \psi_s^{(p-r)(k-r)+2s-1}\right) = \left( \frac{(p-r)(k-r) + 2s}{2}, 1  \right).
\]

We now consider the chart corresponding to $w_{pk}$. The total Jacobian determinant is $\psi_1\cdots \psi_s \cdot w_{pk}^{(p-r)(k-r) + 2s - 1}$. The pullback of the generator of $\mathcal{I}_2$ at position $(p,p)$ is
\begin{equation*}
    w_{pk}^2 \cdot \left( w_{pk}^2f_{xx} + 1 + \sum_{\ell = r+1}^{k-1} w_{x\ell}^2 + \sum_{i = 1}^{s} \psi_i^2 \cdot \left( \sum_{j = d_{<i}+1}^{d_{<i}+d_i} q_{xj}^2 \right)   \right).
\end{equation*}
The right factor of this is non-vanishing and hence a unit in the ring of analytic functions. Therefore, the pullback of $\mathcal{I}_2$ is just $\langle w_{pk}^2 \rangle$. Every summand of a generator of $\mathcal{I}_1$ that has a $\bs{\psi}$-variable in it contains a factor $w_{pk}^2$ after pulling back. We can eliminate these summands by using the generator of the pullback of $\mathcal{I}_2$ and, again, replace the pullback of $\mathcal{I}_1$ by the ideal generated by the entries of $\bl{W}_{11}\bl{W}_{11}^\intercal- \diag(\varepsilon_1,\ldots,\varepsilon_r)$ which has RLCT $(r(r+1)/2, 1)$. Exactly as for the other chart, the pullback of $\mathcal{I}_2$ has 
\[
\rlct_0\left(w_{pk}^2; w_{pk}^{(p-r)(k-r)+2s-1}\right) = \left( \frac{(p-r)(k-r) + 2s}{2}, 1  \right).
\]
Using the sum rule (Fact~\ref{fact:sum-rule}) and taking into account the RLCT from the $\bl{W}_{21}$-variables, we see that the minimal RLCT on all charts of the blow-up is the same and takes the value
\[
\left( \frac{(p-r)(k-r) + 2s}{2} + \frac{r(r+1)}{2} + (p-r)r, 1  \right) = \left( \frac{pk + r(p-k+1) + 2s}{2}, 1 \right).
\]
\end{proof}

The learning coefficients from Theorem \ref{thm:generalized-formula-learning-coeff} in the sBIC can be used as a model selection criterion for choosing between the PPCA and factor analysis models with an unknown number of components.  If we make the simplifying assumption that the true number of components is equal to $k$, the expression for the sBICs \eqref{eqn:sBICLi} takes the simple form given in the following corollary. 

\begin{corollary}
    Assume that only the PPCA and factor analysis models with $k$ components, $\mc{M}_k$ and $\mc{F}_k$, are under consideration. The sBICs for the PPCA model and the factor analysis model are equal to
    \begin{align}
    \label{eqn:sBICPPCA}
        \log(L_{PPCA}) &= \log\big(p_{PPCA}(\bl{X}|\hat{\bl{W}}, \hat{\sigma}^2)\big) -  \frac{pk + k(p-k+1) + 2 }{4} \log(n)
        \\
        \label{eqn:sBICFA}
        \log(L_{FA}) &= \log\big( p_{FA}(\bl{X}|\bl{W},\hat{\bs{\psi}}) \big) -  \frac{pk + k(p-k+1) + 2p }{4} \log(n).
    \end{align}
    The PPCA model will have a higher sBIC than the factor analysis model if and only if 
    \begin{align*}
        \log\bigg( \frac{p_{FA}(\bl{X}|\hat{\bl{W}}, \hat{\bs{\psi}})}{ p_{PPCA}(\bl{X}|\bl{W},\hat{\sigma}^2)} \bigg) < \frac{p-1}{2} \log(n).
    \end{align*}
\end{corollary}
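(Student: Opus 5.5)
The plan is to specialize the sBIC recursion \eqref{eqn:sBICL0Initialize}--\eqref{eqn:sBICci} to the two-model setting, using the learning coefficients from Theorem~\ref{thm:generalized-formula-learning-coeff} with $r=k$. Since only $\mc{M}_k$ and $\mc{F}_k$ are under consideration and the true number of components is taken to be $k$, the partial order on the two indices has $\mc{M}_k=\mc{M}_{k,(p)}$ as the unique minimal element. So $\mc{M}_k$ plays the role of the initial model, and $\mc{F}_k=\mc{M}_{k,(1,\ldots,1)}$ is the only model above it.

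\textbf{Step 1 (PPCA).} For $\mc{M}_k$ I would use the initialization \eqref{eqn:sBICL0Initialize}. Take $s=1$ and $r=k$ in \eqref{eqn:LearningCoeffExpressionGeneral}; equivalently, apply Theorem~\ref{thm:formula-learning-coeff} with $r=k$. This gives $\lambda=\tfrac{pk+k(p-k+1)+2}{4}$ and multiplicity one. The $\log\log n$ factor then disappears, and taking logarithms yields \eqref{eqn:sBICPPCA}.

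\textbf{Step 2 (factor analysis).} For $\mc{F}_k$ the recursion has one submodel $j=\mc{M}_k$, so $b=-P_{FA}n^{-\lambda_{FF}}+L_{PPCA}$ and $c=P_{FA}n^{-\lambda_{FM}}L_{PPCA}$, where $P_{FA}$ denotes the maximized FA likelihood. By Theorem~\ref{thm:generalized-formula-learning-coeff} with $s=p$, the coefficient depends only on $s$ and not on $\bl{e}$. Hence $\lambda_{FF}=\lambda_{FM}=\tfrac{pk+k(p-k+1)+2p}{4}=:\lambda$, both with multiplicity one. Writing $a=P_{FA}n^{-\lambda}$ and $\ell=L_{PPCA}$, we get $b=\ell-a$ and $c=a\ell$, so $b^2+4c=(a+\ell)^2$. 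Then \eqref{eqn:sBICLi} gives $L_{FA}=\tfrac12(a-\ell+a+\ell)=a$, which is \eqref{eqn:sBICFA}.

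\textbf{Step 3 (comparison).} The comparison is a subtraction of the two expressions. The penalties differ by $\tfrac{2p-2}{4}\log n=\tfrac{p-1}{2}\log n$, so $\log L_{PPCA}>\log L_{FA}$ holds exactly when the log-likelihood ratio is below $\tfrac{p-1}{2}\log n$.

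The only real subtlety is Step 2. I need to justify that the learning coefficient of $\mc{F}_k$ along the submodel $\mc{M}_k$ equals its own coefficient, and this rests on the genericity and minimality hypotheses of Theorem~\ref{thm:generalized-formula-learning-coeff}. I also need to recognize the perfect-square simplification. The remaining steps are routine algebra.
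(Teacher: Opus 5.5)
Your proposal is correct and is essentially the paper's proof. The paper also uses Theorem~\ref{thm:generalized-formula-learning-coeff} to get $\lambda_{11}=\lambda_{10}$ for $\mc{F}_k$, with multiplicity one, and then collapses the recursion via $(\tilde L_1 - L_0)^2 + 4\tilde L_1 L_0 = (\tilde L_1 + L_0)^2$ to $L_{FA}=\tilde L_1$; your Steps 1 and 3 just spell out the PPCA initialization and the penalty difference $\tfrac{2p-2}{4}\log n$, which the paper leaves implicit.
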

\begin{proof}
    Indexing the PPCA model by $0$ and the factor analysis model by $1$, we note that $\lambda_{11} = \lambda_{10}$ by Theorem \ref{thm:generalized-formula-learning-coeff}. If we define $\tilde{L}_1 = n^{-\lambda_{11}} p_{FA}(\bl{X}|\bl{W},\hat{\bs{\psi}})$, the formula \eqref{eqn:sBICLi} yields
    \begin{align*}
        L_1 = \tfrac{1}{2}\big( \tilde{L}_1 - L_0 + \sqrt{(\tilde{L}_1 - L_0)^2 + 4\tilde{L}_1L_0}\big) = \tilde{L}_1.
    \end{align*}
\end{proof}
The difference in the model complexity penalties in the sBICs \eqref{eqn:sBICPPCA} and \eqref{eqn:sBICFA} amounts to the difference in the dimensions of the parameters spaces of the models; PPCA has a single variance parameter $\sigma^2$, while factor analysis has a variance parameter $\bs{\psi} \in \mb{R}^p$. As soon as models with a differing number of components are considered the sBIC formulas become more complicated and it becomes necessary to make extra adjustments beyond performing a dimension count.

\section{Simulations and Data Examples}
\label{sec:Simulations}

\subsection{Marginal Log-Likelihood Comparison}
Empirical confirmation for the formula provided in Theorem \ref{thm:formula-learning-coeff} is provided in this section in a low-dimensional ($p = 2$) case. The value of the marginal log-likelihood $\log\big( p(\bl{X}|\mc{M}_k)\big)$ is computed under the assumption that a model with one PC ($k = 1$) is used with the prior distribution $\bl{W} \sim \mc{N}(\bl{0},\bl{I}_2)$ and $\sigma^2 \sim \text{Exp}(1)$ independently. For sample sizes ranging from $50$ to $500$ the marginal log-likelihood is computed by Monte Carlo, where $7000$ draws from the prior are used to estimate the marginal likelihood integral \eqref{eqn:MarginalLikelihoodforPPCA}. From the asymptotic expansion \eqref{eqn:LogMargLikeAsymptotics} the marginal log-likelihood is approximately a linear function of the log-sample size, where the slope is given by the negative of the learning coefficient $\lambda(\bs{\Sigma}_0)$. In our simulations we consider two settings for the true covariance matrix from which the data are drawn: the first takes $\bs{\Sigma}_0 = \bl{I}_2 \in \mc{M}_0$ and the second $\bs{\Sigma}_0 = \diag(2,1) \in \mc{M}_1$. Theorem \ref{thm:formula-learning-coeff} asserts that the learning coefficient in the singular first case is equal to $1$, while the learning coefficient in the second case is equal to $\tfrac{3}{2}$. Note that the second case is a non-singular case where the learning coefficient is equal to the dimension of the model $\mc{M}_1$, equaling $\text{dim}(\mc{S}_{++}^2) = 3$, divided by two.   

Figure \ref{fig:MargLikelihoodVsSampleSize} displays the observed marginal log-likelihoods against the log sample sizes; more precisely, we also consider the difference of marginal log-likelihood and log-likelihood under the true distribution. In each panel, the fitted line is the line of best fit to the simulated values, with respective slopes of $-1.00$ and $-1.44$, while the second line has slope exactly $-\lambda(\bs{\Sigma}_0)$, namely $-1$ and $-1.5$. As is apparent in the plots, these two lines are nearly identical, illustrating that the theoretically justified learning coefficients can be observed in practice and that the marginal log-likelihood asymptotics differ in the singular and non-singular regimes.  
\begin{figure}
    \centering
\begin{tikzpicture}
\begin{groupplot}[
  group style={group size=2 by 1, horizontal sep=2.5cm},
  width=0.45\linewidth, height=0.40\linewidth,
  xmin=3.75, xmax=6.4,
  xtick={4.0,4.5,5.0,5.5,6.0},
  xticklabel style={/pgf/number format/fixed, /pgf/number format/precision=1},
  yticklabel style={/pgf/number format/fixed, /pgf/number format/precision=1},
  xlabel={Log Sample Size}, ylabel={Log Marginal Likelihood Ratio},
  legend columns=2,
  label style={font=\small\sffamily\sansmath},
  tick label style={font=\footnotesize\sffamily\sansmath},
  title style={font=\small\sffamily\bfseries, yshift=-2pt},
  grid=major, grid style={line width=.2pt, draw=gray!22},
  legend style={font=\footnotesize\sffamily, draw=none, fill=none,
                /tikz/every even column/.append style={column sep=8pt}},
]
\nextgroupplot[title={True Rank of Zero}, ymin=-5.4577, ymax=-2.2568, legend to name=leg:figtwoA]
\addplot[color=sbiccol, thick, forget plot] coordinates {(3.8199,-2.5935) (6.3067,-5.0962)};
\addplot[color=biccol, thick, dashed, forget plot] coordinates {(3.8199,-2.6037) (6.3067,-5.0906)};
\addplot[color=black, only marks, mark=*, mark size=1.3pt, forget plot] coordinates {(3.912,-2.7068) (4.6052,-3.3773) (5.0106,-3.7819) (5.2983,-4.0578) (5.5214,-4.2952) (5.7037,-4.4871) (5.858,-4.6455) (5.9914,-4.7945) (6.1092,-4.9089) (6.2146,-5.0077)};
\addlegendimage{color=black, only marks, mark=*, mark size=1.3pt}\addlegendentry{Simulated Values}
\addlegendimage{color=sbiccol, thick}\addlegendentry{Fitted Line \phantom{asd}}
\addlegendimage{empty legend}\addlegendentry{}
\addlegendimage{color=biccol, thick, dashed}\addlegendentry{Slope $-1$ Line}

\nextgroupplot[title={True Rank of One}, ymin=-7.0347, ymax=-2.7834, legend to name=leg:figtwoB]
\addplot[color=sbiccol, thick, forget plot] coordinates {(3.8199,-3.1053) (6.3067,-6.6928)};
\addplot[color=biccol, thick, dashed, forget plot] coordinates {(3.8199,-3.0133) (6.3067,-6.7436)};
\addplot[color=black, only marks, mark=*, mark size=1.3pt, forget plot] coordinates {(3.912,-3.2334) (4.6052,-4.2564) (5.0106,-4.8305) (5.2983,-5.2306) (5.5214,-5.5495) (5.7037,-5.8019) (5.858,-6.0399) (5.9914,-6.2326) (6.1092,-6.4125) (6.2146,-6.5847)};
\addlegendimage{color=black, only marks, mark=*, mark size=1.3pt}\addlegendentry{Simulated Values}
\addlegendimage{color=sbiccol, thick}\addlegendentry{Fitted Line \phantom{asdf}}
\addlegendimage{empty legend}\addlegendentry{}
\addlegendimage{color=biccol, thick, dashed}\addlegendentry{Slope $-1.5$ Line}
\end{groupplot}
\end{tikzpicture}

\vspace{1mm}
\makebox[\linewidth]{\hfill\ref*{leg:figtwoA}\hfill\hfill\ref*{leg:figtwoB}\hfill}
    \caption{Plot of the marginal log-likelihood against the sample size for a model with $p = 2$, one PC $k = 1$, and true covariance matrix $\bs{\Sigma}_0 = \bl{I}_2$ (left) and $\bs{\Sigma}_0 = \diag(2,1)$ (right). }
    \label{fig:MargLikelihoodVsSampleSize}
\end{figure}

\subsection{Comparison of the sBIC to Other Approaches}
To begin our simulations comparing the sBIC to alternative methods, we illustrate in Figure \ref{fig:sBIcvsBICProportion} the proportion of times across $2000$ Monte Carlo runs that the sBIC and BIC correctly estimate the number of PCs of the underlying covariance matrix. This is done for data of dimension $p = 20$, a sample size of $n = 500$ from a Gaussian distribution, and the true number of PCs ranging from $0$ to $19$, where $\bs{\Sigma}_0 = \diag(c,\ldots,c,1,\ldots,1)$. From the figure it is readily apparent that the sBIC dominates the BIC in that it estimates the correct model at least as often as the BIC, and in many cases estimates the correct model with a much higher probability. When $c = 2$ and the true number of PCs is $9$, the sBIC found the correct model $75.6\%$ of the time while the BIC did so only $0.3\%$ of the time. Both model selection procedures tend to favor smaller, more parsimonious models. For example, when the signal to noise ratio (SNR) $c$ is equal to two, the sBIC is able to select the correct model with probability almost one when the number of PCs is in the range of $0$--$5$. When the number of PCs ranges between $15$--$19$, both methods have difficulty selecting the correct model; in such a setting a larger sample size or a larger SNR is needed in order for these methods to find the correct model with high probability.
\begin{figure}[ht]
    \centering
\begin{tikzpicture}
\begin{groupplot}[
  group style={group size=3 by 1, horizontal sep=0.8cm, ylabels at=edge left},
  width=0.35\linewidth, height=0.30\linewidth,
  xmin=-0.8, xmax=19.8, ymin=-0.04, ymax=1.04,
  xtick={0,5,10,15,19}, ytick={0,0.2,0.4,0.6,0.8,1.0},
  yticklabel style={/pgf/number format/fixed, /pgf/number format/precision=1},
  xlabel={True Number of PCs $r$}, ylabel={Proportion Correct},
  legend columns=2,
  label style={font=\small\sffamily\sansmath},
  tick label style={font=\footnotesize\sffamily\sansmath},
  title style={font=\small\sffamily\bfseries, yshift=-2pt},
  grid=major, grid style={line width=.2pt, draw=gray!22},
  legend style={font=\footnotesize\sffamily, draw=none, fill=none,
                /tikz/every even column/.append style={column sep=8pt}},
]
\nextgroupplot[title={SNR = 1.5}, legend to name=leg:figthree]
\addplot[color=sbiccol, mark=*, mark size=1.1pt, thick, dashed] coordinates {(0,0.9995) (1,0.532) (2,0.1785) (3,0.043) (4,0.004) (5,0) (6,0) (7,0) (8,0) (9,0) (10,0) (11,0) (12,0) (13,0) (14,0) (15,0) (16,0) (17,0) (18,0) (19,0)};
\addplot[color=biccol, mark=square*, mark size=1.1pt, thick, dashed] coordinates {(0,1) (1,0.001) (2,0) (3,0) (4,0) (5,0) (6,0) (7,0) (8,0) (9,0) (10,0) (11,0) (12,0) (13,0) (14,0) (15,0) (16,0) (17,0) (18,0) (19,0)};
\legend{sBIC,BIC}

\nextgroupplot[title={SNR = 2}]
\addplot[color=sbiccol, mark=*, mark size=1.1pt, thick, dashed] coordinates {(0,1) (1,0.9995) (2,0.999) (3,0.999) (4,0.997) (5,0.9885) (6,0.977) (7,0.9425) (8,0.874) (9,0.7565) (10,0.61) (11,0.3445) (12,0.0645) (13,0.0005) (14,0) (15,0) (16,0) (17,0) (18,0) (19,0)};
\addplot[color=biccol, mark=square*, mark size=1.1pt, thick, dashed] coordinates {(0,1) (1,0.912) (2,0.7665) (3,0.6305) (4,0.526) (5,0.391) (6,0.29) (7,0.1505) (8,0.0295) (9,0.003) (10,0) (11,0) (12,0) (13,0) (14,0) (15,0) (16,0) (17,0) (18,0) (19,0)};

\nextgroupplot[title={SNR = 3}]
\addplot[color=sbiccol, mark=*, mark size=1.1pt, thick, dashed] coordinates {(0,0.9995) (1,1) (2,0.999) (3,1) (4,1) (5,0.9995) (6,0.998) (7,0.998) (8,0.9965) (9,0.997) (10,0.9955) (11,0.9965) (12,0.9965) (13,0.9995) (14,1) (15,0.9995) (16,0.2795) (17,0) (18,0) (19,0)};
\addplot[color=biccol, mark=square*, mark size=1.1pt, thick, dashed] coordinates {(0,1) (1,1) (2,1) (3,1) (4,1) (5,1) (6,1) (7,1) (8,1) (9,1) (10,1) (11,1) (12,1) (13,0.9995) (14,0.923) (15,0.0425) (16,0) (17,0) (18,0) (19,0)};
\end{groupplot}
\end{tikzpicture}

\vspace{1mm}
\ref*{leg:figthree}
    \caption{The proportion of simulation runs where the sBIC and BIC correctly estimate the true number of PCs. The true covariance matrices have the form $\diag(c,\ldots,c,1\ldots,1)$ with a signal to noise ratio (SNR) of $c$.}
    \label{fig:sBIcvsBICProportion}
\end{figure}

In our second comparison study of model selection methods we contrast the performance of the sBIC, BIC, two variants of the normal-Gamma (NG) method of \citep{BouveyronBayesianPCA}, and the generalized cross-validation (GCV) method from \citep{GenCrossValidJosseHusson}. Across the simulations, the dimension of $\bs{\Sigma}_0$ is taken to be $p = 5$ or $p = 20$ and the true number of PCs ranges from $0$ to $p-1$. Two different covariance structures are considered: the first is an isotropic PPCA model \citep{bouveyron2011IsotropicPPCA} of the form $\bs{\Sigma}_0 = \diag(5,\ldots,5,1,\ldots,1)$ where the PC eigenvalues are all equal, the second has the true covariance matrix $\bs{\Sigma}_0 = \diag(r+1,\ldots,2,1,\ldots,1)$, with the PC eigenvalues increasing linearly. The sample size takes values $n \in \{30,50,250,4000\}$. For each of these parameter settings $1000$ simulations were run and for each method the proportion of the runs that the true model was correctly estimated was recorded, along with the absolute distance between the true number of PCs and the estimated number of PCs. The proportions obtained for the linearly increasing eigenvalues with $p = 20$ are displayed in Figure \ref{fig:LinearEigenvalueComparisonPlot} and discussed below; the results for the remaining settings are tabulated in Appendix \ref{app:SimulationResults}. There, the proportions of correct selection are reported in Tables \ref{tab:PropIsotropicP20} and \ref{tab:PropLinearP20} for $p = 20$ and in Table \ref{tab:PropP5} for $p = 5$, while the corresponding average absolute distances are reported in Tables \ref{tab:DistIsotropicP20} and \ref{tab:DistLinearP20} for $p = 20$ and in Table \ref{tab:DistP5} for $p = 5$.  

Figure \ref{fig:LinearEigenvalueComparisonPlot} illustrates the proportion of the time that the various methods correctly estimate the true model when $p = 20$ and the PC eigenvalues are linearly increasing. For small sample sizes both the BIC and the sBIC tend to select small models while the other three methods are selected from a wider spectrum of model sizes and thus are more likely to find the correct model when the true number of PCs is large. The BIC and sBIC exhibit a kind of thresholding behavior where the probability that the sBIC selects a large model will jump from approximately zero to one once the sample size is large enough (e.g., the $\text{SNR} = 3$ plot of Figure \ref{fig:sBIcvsBICProportion}). In the $n = 250$ plot in Figure \ref{fig:LinearEigenvalueComparisonPlot} it is seen that the sBIC outperforms the other methods. At a sample size of $4000$ both the sBIC and BIC find the correct model essentially $100\%$ of the time.  Although the GCV performs reasonably in the isotropic covariance setting (Table \ref{tab:PropIsotropicP20}), it breaks down in the linearly increasing eigenvalue setting and almost never selects larger models, even when $n = 4000$. The NG approach is based on approximating the marginal likelihood of a PPCA model with a prior on $\bs{\Sigma}_0$ given by
\begin{align*}
    \bs{\Sigma}_0 = \bl{W}\bl{W}^\intercal + \sigma^2 \bl{I}_p, \;\; w_{ij} \overset{i.i.d.}{\sim} \mc{N}(0,\phi^{-1}),\;\; \sigma^2 \sim \text{Gamma}(a,\phi/2). 
\end{align*}
The NG1 and NG2 approaches differ by how we choose the hyperparameters $(a,\phi)$ of the prior. A favorable choice of hyperparameters is used in NG1 where we set 
\begin{align*}
    \phi = \frac{p(r+0.1)}{1.05(\text{tr}(\bs{\Sigma}_0)-p)}, \; a = \frac{\phi}{2}, 
\end{align*}
with $\bs{\Sigma}_0$ and $r$ equaling the true underlying covariance and number of PCs. In practice, these quantities would not be known. This choice of hyperparameters ensures that marginally $\mb{E}(\sigma^2) = 1$ and $\mb{E}(\tr( \bl{W}\bl{W}^\intercal + \sigma^2 \bl{I}_p)) = \tr(\bs{\Sigma}_0)$, meaning that the prior is concentrated near the true covariance matrix from which we are drawing observations. The NG2 hyperparameters are the same as those in NG1 except that $r$ is taken to be equal to $p-1$; the idea is to choose a prior that is concentrated near models that have a large number of PCs. In Figure \ref{fig:LinearEigenvalueComparisonPlot} NG1 performs well for small sample sizes. This can be attributed to the fact that by construction the prior is centered about the true covariance matrix. However, NG2 performs poorly except for large and small models, even when the sample size is $4000$. This highlights the sensitivity of the NG method to the choice of prior hyperparameters. Another issue with the NG approach is that the marginal likelihood $\mb{E}_{\bs{\Sigma}}(p(\bl{X}_1,\ldots,\bl{X}_n|\bs{\Sigma}))$ is not exactly computed, instead the approximation $\prod_{i = 1}^n \mb{E}_{\bs{\Sigma}}(p(\bl{X}_i|\bs{\Sigma}))$ to the marginal likelihood that assumes independence is used for model selection. Asymptotically this criterion will select the model $\argmax_{k} \mb{E}_{\bl{X}_1}\big(\log(p_k(\bl{X}_1))\big)$, where $p_k(\bl{X}_1) = \mb{E}_{\bs{\Sigma}}(p(\bl{X}_1|\bs{\Sigma}))$ is the marginal likelihood of a single observation from the $k$th model $\mc{M}_k$ with respect to the normal-Gamma prior. That is, this criterion asymptotically selects the model $\mc{M}_k$ with the minimum KL-divergence between $p_k(x)$ and $p(x|\bs{\Sigma}_0)$. There is no guarantee that such a procedure is consistent, with the consistency depending heavily on the choice of prior. Moreover, the influence from the prior distribution does not diminish as the sample size increases. From the simulation results we conclude that while the NG method can perform adequately when the prior is chosen judiciously, the performance of this procedure is suspect for large sample sizes.

In summary, we recommend using the sBIC as it is more sensitive in detecting large models than the BIC. For small sample sizes the sBIC remains conservative, favoring parsimonious models. Other competing methods perform erratically in simulations and for large sample sizes are outperformed by the sBIC; the sBIC is able to accurately find the correct model regardless of the eigenstructure of $\bs{\Sigma}_0$.

\begin{figure}[ht]
    \centering
\begin{tikzpicture}
\begin{groupplot}[
  group style={group size=2 by 2, horizontal sep=1.3cm, vertical sep=1.5cm,
               xlabels at=edge bottom, ylabels at=edge left},
  width=0.505\linewidth, height=0.40\linewidth,
  xmin=-0.6, xmax=19.6, ymin=-0.04, ymax=1.04,
  xtick={0,5,10,15,19}, ytick={0,0.2,0.4,0.6,0.8,1.0},
  yticklabel style={/pgf/number format/fixed, /pgf/number format/precision=1},
  xlabel={True Number of PCs $r$}, ylabel={Proportion Correct},
  label style={font=\small\sffamily\sansmath},
  tick label style={font=\footnotesize\sffamily\sansmath},
  title style={font=\small\sffamily\bfseries, yshift=-2pt},
  grid=major, grid style={line width=.2pt, draw=gray!22},
  legend style={font=\footnotesize\sffamily, draw=none, fill=none, legend columns=5,
                /tikz/every even column/.append style={column sep=8pt}},
]
\nextgroupplot[title={n = 30}, legend to name=grouplegend]
\addplot[color=sbiccol, mark=*, mark size=1.1pt, thick, dashed] coordinates {(0,0.80) (1,0.31) (2,0.21) (3,0.20) (4,0.14) (5,0.10) (6,0.08) (7,0.09) (8,0.06) (9,0.04) (10,0.02) (11,0.02) (12,0.01) (13,0.00) (14,0.00) (15,0.00) (16,0.00) (17,0.00) (18,0.00) (19,0.00)};
\addplot[color=biccol, mark=square*, mark size=1.1pt, thick, dashed] coordinates {(0,1.00) (1,0.00) (2,0.00) (3,0.00) (4,0.00) (5,0.00) (6,0.00) (7,0.00) (8,0.00) (9,0.00) (10,0.00) (11,0.00) (12,0.00) (13,0.00) (14,0.00) (15,0.00) (16,0.00) (17,0.00) (18,0.00) (19,0.00)};
\addplot[color=ngonecol, mark=triangle*, mark size=1.1pt, thick, dashed] coordinates {(0,0.00) (1,0.39) (2,0.41) (3,0.43) (4,0.45) (5,0.46) (6,0.46) (7,0.46) (8,0.43) (9,0.40) (10,0.40) (11,0.37) (12,0.35) (13,0.35) (14,0.33) (15,0.32) (16,0.32) (17,0.31) (18,0.31) (19,0.61)};
\addplot[color=ngtwocol, mark=diamond*, mark size=1.1pt, thick, dashed] coordinates {(0,0.49) (1,0.02) (2,0.01) (3,0.00) (4,0.00) (5,0.00) (6,0.00) (7,0.00) (8,0.00) (9,0.00) (10,0.00) (11,0.00) (12,0.00) (13,0.00) (14,0.00) (15,0.01) (16,0.05) (17,0.14) (18,0.25) (19,0.61)};
\addplot[color=gcvcol, mark=pentagon*, mark size=1.1pt, thick, dashed] coordinates {(0,0.99) (1,0.04) (2,0.16) (3,0.22) (4,0.20) (5,0.14) (6,0.14) (7,0.15) (8,0.12) (9,0.11) (10,0.10) (11,0.08) (12,0.07) (13,0.07) (14,0.06) (15,0.03) (16,0.03) (17,0.02) (18,0.01) (19,0.01)};
\legend{sBIC,BIC,NG1,NG2,GCV}

\nextgroupplot[title={n = 50}]
\addplot[color=sbiccol, mark=*, mark size=1.1pt, thick, dashed] coordinates {(0,0.94) (1,0.36) (2,0.25) (3,0.19) (4,0.19) (5,0.14) (6,0.14) (7,0.11) (8,0.09) (9,0.06) (10,0.06) (11,0.03) (12,0.02) (13,0.01) (14,0.00) (15,0.00) (16,0.00) (17,0.00) (18,0.00) (19,0.00)};
\addplot[color=biccol, mark=square*, mark size=1.1pt, thick, dashed] coordinates {(0,1.00) (1,0.00) (2,0.00) (3,0.00) (4,0.00) (5,0.00) (6,0.00) (7,0.00) (8,0.00) (9,0.00) (10,0.00) (11,0.01) (12,0.00) (13,0.01) (14,0.00) (15,0.00) (16,0.00) (17,0.00) (18,0.00) (19,0.00)};
\addplot[color=ngonecol, mark=triangle*, mark size=1.1pt, thick, dashed] coordinates {(0,0.00) (1,0.43) (2,0.46) (3,0.47) (4,0.54) (5,0.56) (6,0.56) (7,0.52) (8,0.53) (9,0.53) (10,0.51) (11,0.45) (12,0.47) (13,0.44) (14,0.42) (15,0.40) (16,0.40) (17,0.38) (18,0.37) (19,0.63)};
\addplot[color=ngtwocol, mark=diamond*, mark size=1.1pt, thick, dashed] coordinates {(0,0.51) (1,0.02) (2,0.00) (3,0.00) (4,0.00) (5,0.00) (6,0.00) (7,0.00) (8,0.00) (9,0.00) (10,0.00) (11,0.00) (12,0.00) (13,0.00) (14,0.00) (15,0.00) (16,0.02) (17,0.10) (18,0.30) (19,0.63)};
\addplot[color=gcvcol, mark=pentagon*, mark size=1.1pt, thick, dashed] coordinates {(0,0.99) (1,0.10) (2,0.29) (3,0.28) (4,0.26) (5,0.27) (6,0.25) (7,0.21) (8,0.20) (9,0.17) (10,0.17) (11,0.17) (12,0.14) (13,0.13) (14,0.12) (15,0.10) (16,0.06) (17,0.04) (18,0.00) (19,0.00)};

\nextgroupplot[title={n = 250}]
\addplot[color=sbiccol, mark=*, mark size=1.1pt, thick, dashed] coordinates {(0,1.00) (1,0.95) (2,0.96) (3,0.96) (4,0.99) (5,0.98) (6,0.98) (7,0.99) (8,0.99) (9,0.99) (10,0.99) (11,0.99) (12,0.99) (13,0.99) (14,0.98) (15,0.98) (16,0.97) (17,0.96) (18,0.96) (19,0.78)};
\addplot[color=biccol, mark=square*, mark size=1.1pt, thick, dashed] coordinates {(0,1.00) (1,0.18) (2,0.20) (3,0.26) (4,0.31) (5,0.39) (6,0.45) (7,0.54) (8,0.62) (9,0.69) (10,0.73) (11,0.82) (12,0.85) (13,0.91) (14,0.92) (15,0.96) (16,0.97) (17,0.98) (18,0.99) (19,0.98)};
\addplot[color=ngonecol, mark=triangle*, mark size=1.1pt, thick, dashed] coordinates {(0,0.00) (1,0.33) (2,0.49) (3,0.57) (4,0.69) (5,0.75) (6,0.82) (7,0.87) (8,0.89) (9,0.89) (10,0.87) (11,0.85) (12,0.82) (13,0.79) (14,0.78) (15,0.77) (16,0.74) (17,0.71) (18,0.72) (19,0.75)};
\addplot[color=ngtwocol, mark=diamond*, mark size=1.1pt, thick, dashed] coordinates {(0,0.53) (1,0.01) (2,0.00) (3,0.00) (4,0.00) (5,0.00) (6,0.00) (7,0.00) (8,0.00) (9,0.00) (10,0.00) (11,0.00) (12,0.00) (13,0.00) (14,0.00) (15,0.00) (16,0.00) (17,0.01) (18,0.28) (19,0.75)};
\addplot[color=gcvcol, mark=pentagon*, mark size=1.1pt, thick, dashed] coordinates {(0,1.00) (1,0.21) (2,0.32) (3,0.32) (4,0.31) (5,0.31) (6,0.29) (7,0.28) (8,0.27) (9,0.25) (10,0.23) (11,0.20) (12,0.19) (13,0.16) (14,0.13) (15,0.10) (16,0.07) (17,0.03) (18,0.00) (19,0.00)};

\nextgroupplot[title={n = 4000}]
\addplot[color=sbiccol, mark=*, mark size=1.1pt, thick, dashed] coordinates {(0,1.00) (1,1.00) (2,1.00) (3,1.00) (4,1.00) (5,1.00) (6,1.00) (7,1.00) (8,1.00) (9,1.00) (10,1.00) (11,1.00) (12,1.00) (13,1.00) (14,1.00) (15,1.00) (16,1.00) (17,0.99) (18,0.99) (19,1.00)};
\addplot[color=biccol, mark=square*, mark size=1.1pt, thick, dashed] coordinates {(0,1.00) (1,1.00) (2,1.00) (3,1.00) (4,1.00) (5,1.00) (6,1.00) (7,1.00) (8,1.00) (9,1.00) (10,1.00) (11,1.00) (12,1.00) (13,1.00) (14,1.00) (15,1.00) (16,1.00) (17,1.00) (18,1.00) (19,1.00)};
\addplot[color=ngonecol, mark=triangle*, mark size=1.1pt, thick, dashed] coordinates {(0,0.00) (1,0.05) (2,0.48) (3,0.76) (4,0.97) (5,1.00) (6,1.00) (7,1.00) (8,1.00) (9,1.00) (10,1.00) (11,1.00) (12,1.00) (13,1.00) (14,1.00) (15,1.00) (16,1.00) (17,1.00) (18,1.00) (19,1.00)};
\addplot[color=ngtwocol, mark=diamond*, mark size=1.1pt, thick, dashed] coordinates {(0,0.56) (1,0.00) (2,0.00) (3,0.00) (4,0.00) (5,0.00) (6,0.00) (7,0.00) (8,0.00) (9,0.00) (10,0.00) (11,0.00) (12,0.00) (13,0.00) (14,0.00) (15,0.00) (16,0.00) (17,0.00) (18,0.02) (19,1.00)};
\addplot[color=gcvcol, mark=pentagon*, mark size=1.1pt, thick, dashed] coordinates {(0,1.00) (1,0.12) (2,0.11) (3,0.09) (4,0.08) (5,0.08) (6,0.07) (7,0.04) (8,0.03) (9,0.01) (10,0.02) (11,0.01) (12,0.00) (13,0.00) (14,0.00) (15,0.00) (16,0.00) (17,0.00) (18,0.00) (19,0.00)};
\end{groupplot}
\end{tikzpicture}

    \vspace{1mm}
    \ref*{grouplegend}
    \caption{A comparison of the proportion of $1000$ simulations where the correct model is identified for various model selection procedures. The covariance structure has the form $\bs{\Sigma}_0 = \diag(r+1,r,r-1,\ldots,1,\ldots,1) \in \mb{R}^{20 \times 20}$, with $r$ ranging from $0$ to $19$ ($x$-axes). Each plot has a different sample size of $n \in \{30,50,250,4000\}$.
    }
    \label{fig:LinearEigenvalueComparisonPlot}
\end{figure}

\subsection{Applications of PPCA and Factor Analysis}
\label{sec:Applications}

\begin{figure}[ht]
    \centering
\begin{tikzpicture}
\begin{groupplot}[
  group style={group size=2 by 2, horizontal sep=1.3cm, vertical sep=1.4cm,
               xlabels at=edge bottom, ylabels at=edge left},
  width=0.50\linewidth, height=0.38\linewidth,
  xmin=-0.6, ymin=-0.04, ymax=1.06,
  ytick={0,0.2,0.4,0.6,0.8,1.0},
  yticklabel style={/pgf/number format/fixed, /pgf/number format/precision=1},
  xlabel={Number of PCs or Factors}, ylabel={},
  legend columns=3,
  label style={font=\small\sffamily\sansmath},
  tick label style={font=\footnotesize\sffamily\sansmath},
  title style={font=\small\sffamily\bfseries, yshift=-2pt},
  grid=major, grid style={line width=.2pt, draw=gray!22},
  legend style={font=\footnotesize\sffamily, draw=none, fill=none,
                /tikz/every even column/.append style={column sep=8pt}},
]
\nextgroupplot[title={Decathlon sBIC}, xmax=10.6, xtick={0,2,4,6,8,10}, ylabel={}, legend to name=leg:figfive]
\draw[biccol, dashed, thick] ({axis cs:4,0}|-{rel axis cs:0,0}) -- ({axis cs:4,0}|-{rel axis cs:0,1});
\addplot[color=black, mark=*, mark size=1.1pt, thick, dashed] coordinates {(1,1) (2,0.4744) (3,0.3101) (4,0.2844) (5,0.1939) (6,0.1621) (7,0.0803) (8,0.0737) (9,0.0233) (10,0.0104)};
\addplot[color=sbiccol, mark=*, mark size=1.1pt, thick, dashed] coordinates {(0,0.4212) (1,0.775) (2,0.8329) (3,0.7828) (4,0.8244) (5,0.7895) (6,0.8488) (7,0.7008) (8,0.793) (9,0.6002)};
\addplot[color=biccol, mark=square*, mark size=1.1pt, thick, dashed] coordinates {(0,0.595) (1,0.6622) (2,0.9154) (3,0.9949) (4,1) (5,0.8393) (6,0.7394) (7,0.486) (8,0.2355) (9,0)};
\legend{Eigenvalues,PPCA,FA}

\nextgroupplot[title={Wine sBIC}, xmax=14.6, xtick={0,2,4,6,8,10,12,14}]
\draw[sbiccol, dashed, thick] ({axis cs:8,0}|-{rel axis cs:0,0}) -- ({axis cs:8,0}|-{rel axis cs:0,1});
\addplot[color=black, mark=*, mark size=1.1pt, thick, dashed] coordinates {(1,1) (2,0.2122) (3,0.0993) (4,0.0562) (5,0.0329) (6,0.0248) (7,0.0175) (8,0.0131) (9,0.0076) (10,0.006) (11,0.0044) (12,0.0028) (13,0.0019) (14,0.0005)};
\addplot[color=sbiccol, mark=*, mark size=1.1pt, thick, dashed] coordinates {(0,0) (1,0.6902) (2,0.8681) (3,0.9385) (4,0.9758) (5,0.9871) (6,0.9963) (7,0.998) (8,1) (9,0.974) (10,0.9497) (11,0.9256) (12,0.8934) (13,0.8757)};
\addplot[color=biccol, mark=square*, mark size=1.1pt, thick, dashed] coordinates {(0,0.2399) (1,0.6736) (2,0.8431) (3,0.9276) (4,0.9697) (5,0.9555) (6,0.9643) (7,0.964) (8,0.951) (9,0.9205) (10,0.8926) (11,0.8395) (12,0.7878) (13,0.7384)};

\nextgroupplot[title={Decathlon BIC}, xmax=10.6, xtick={0,2,4,6,8,10}, ylabel={}]
\draw[sbiccol, dashed, thick] ({axis cs:1,0}|-{rel axis cs:0,0}) -- ({axis cs:1,0}|-{rel axis cs:0,1});
\addplot[color=black, mark=*, mark size=1.1pt, thick, dashed] coordinates {(1,1) (2,0.4744) (3,0.3101) (4,0.2844) (5,0.1939) (6,0.1621) (7,0.0803) (8,0.0737) (9,0.0233) (10,0.0104)};
\addplot[color=sbiccol, mark=*, mark size=1.1pt, thick, dashed] coordinates {(0,0.5131) (1,1) (2,0.7856) (3,0.4196) (4,0.2826) (5,0.1075) (6,0.1705) (7,0) (8,0.2588) (9,0.1875)};
\addplot[color=biccol, mark=square*, mark size=1.1pt, thick, dashed] coordinates {(0,0.7603) (1,0.6698) (2,0.8771) (3,0.8324) (4,0.6897) (5,0.3507) (6,0.1686) (7,0.1875) (8,0.1875) (9,0.1875)};

\nextgroupplot[title={Wine BIC}, xmax=14.6, xtick={0,2,4,6,8,10,12,14}]
\draw[sbiccol, dashed, thick] ({axis cs:4,0}|-{rel axis cs:0,0}) -- ({axis cs:4,0}|-{rel axis cs:0,1});
\addplot[color=black, mark=*, mark size=1.1pt, thick, dashed] coordinates {(1,1) (2,0.2122) (3,0.0993) (4,0.0562) (5,0.0329) (6,0.0248) (7,0.0175) (8,0.0131) (9,0.0076) (10,0.006) (11,0.0044) (12,0.0028) (13,0.0019) (14,0.0005)};
\addplot[color=sbiccol, mark=*, mark size=1.1pt, thick, dashed] coordinates {(0,0) (1,0.724) (2,0.9107) (3,0.9821) (4,1) (5,0.975) (6,0.9556) (7,0.9333) (8,0.92) (9,0.8838) (10,0.8592) (11,0.8448) (12,0.8314) (13,0.8418)};
\addplot[color=biccol, mark=square*, mark size=1.1pt, thick, dashed] coordinates {(0,0.2516) (1,0.7067) (2,0.8844) (3,0.9711) (4,0.9948) (5,0.941) (6,0.9206) (7,0.8959) (8,0.8667) (9,0.8256) (10,0.8418) (11,0.8418) (12,0.8418) (13,0.8418)};
\end{groupplot}
\end{tikzpicture}

\vspace{1mm}
\ref*{leg:figfive}
    \caption{Plots of the eigenvalue ratios of the sample covariance matrix, $\gamma_i/\gamma_{\max}$, in decreasing order for each dataset. The sBIC values for the PPCA and factor analysis (FA) models are plotted against the number of PCs or factors, and are normalized as $(\text{sBIC}_i - \min(\text{sBIC}))/(\max(\text{sBIC}) - \min(\text{sBIC}))$. The maximum sBIC is indicated by a vertical dashed line in the style of the corresponding model. The BIC values are plotted in the same way in the lower row.}
                \label{fig:RealDatasBIC}
\end{figure}

To conclude, in this section we apply the sBIC to two different real data examples. We use the setup outlined in Section \ref{sec:UnifyPPCAandFA} where we jointly examine candidate PPCA and factor analysis models. By the construction of the sBIC, the sBIC values for the PPCA model do not depend on whether the factor analysis models are included as candidate models. However, the factor analysis sBIC will be different based on whether the PPCA models are also considered because quantities from PPCA appear in the sums \eqref{eqn:sBICbi} and \eqref{eqn:sBICci}.

The first example consists of the decathlon scores across $p = 10$ events for the top $n = 24$ men competing in the $2008$ Summer Olympics \citep{marden2015multivariate}. The second dataset records $p = 14$ gustatory qualities of $n = 21$ different wines \citep{WineDatahusson2016package}. Figure \ref{fig:RealDatasBIC} displays the scree plot of the eigenvalues of the sample covariance matrices along with normalized values of the sBIC and BIC for models with $0,\ldots,p-1$ principal components and factors. The factor analysis model becomes saturated when there are $7$ and $10$ factors respectively for the decathlon and wine data.  All quantities are normalized to take values in $[0,1]$. The sBIC selects the factor analysis model with $4$ factors while the BIC selects the PPCA model with $r = 1$ principal component for the decathlon data. For the wine data the sBIC and BIC select PPCA models with $8$ and $4$ principal components respectively.
These results are consistent with the simulation studies showing that the sBIC tends to select larger models as compared to the BIC. While there is no known ground truth for the given examples, based on the scree plot of the decathlon example it appears that a model with only a single PC is overly conservative because many of the eigenvalues beyond the largest constitute a significant proportion of the total variance. It should be noted that while the sBIC is less conservative than the BIC, both procedures will tend to select relatively small models.

\section{Conclusion and Future Directions}
The present work fills in an outstanding gap in the literature: obtaining the correct large sample asymptotics for approximate Bayesian model selection in PPCA, as well as providing a unified framework for performing model selection jointly on PPCA and factor analysis models. There are many related and equally involved open questions, a few of which we now mention. 

One immediate question is how to extend the present results to the large-$p$ settings that are prevalent in the high-dimensional covariance matrix estimation literature \citep{RMTBaiSilverstein}. In recent work \citep{katsevich2024laplace,katsevich2026high,ReidTangLaplaceApprox} conditions for the validity of Laplace approximations in high-dimensional settings are examined. In these works standard regularity conditions requiring that the log of the integrand must have some degree of smoothness are invoked. Obtaining asymptotic log-marginal likelihood expansions in high-dimensions when singularities are present is a challenging open direction. 

Other future avenues of study include determining the learning coefficients for models that are related to the PPCA model. As indicated in Section \ref{sec:UnifyPPCAandFA} the learning coefficients of both the factor analysis and PPCA models can be determined by the techniques introduced here and in \citep{drton2025Factor}. Rather than partitioning the noise variances as in the partitioned noise model, an alternative modeling framework is to assume that the noise is isotropic and that the principal components are partitioned according to their norms. In the special case where the partition is of the form $(k,p-k)$ the resulting model is the isotropic PCA model with covariance matrix $\bs{\Sigma} = \tau^2 \bl{W}\bl{W}^\intercal + \sigma^2 \bl{I}$, where the columns of $\bl{W}$ have unit norm \citep{bouveyron2011IsotropicPPCA}. Maximum likelihood alone is sufficient for consistent model selection in isotropic PPCA simply because no model is contained in another. Generalizing both PPCA and isotropic PCA is stratified PCA, where the covariance matrices are parameterized according to $\bs{\Sigma} = \sum_{i = 1}^m \sigma_i^2 \bl{W}_i\bl{W}_i^\intercal$, with $\sigma^2_1 \geq \cdots \geq \sigma^2_m$, each $\bl{W}_i$ having orthonormal columns, and $\sum_{i = 1}^m \bl{W}_i\bl{W}_i^\intercal = \bl{I}$ \citep{SPCAPennec}. Determining the learning coefficients for the larger class of stratified PCA models remains an open question. Lastly, we mention that canonical correlation analysis (CCA) has a probabilistic formulation akin to PPCA \citep{bach2005CCA}. As this model also has singularities, a study of the learning coefficients of this model is warranted; these learning coefficients can be used to perform model selection on the number of canonical directions.

\section*{Acknowledgments}

A.~McCormack acknowledges the support of the Natural Sciences and Engineering Research Council of Canada (NSERC), [RGPIN-2025-03968, DGECR-2025-00237]. D.~Windisch was supported by the FWO grants G0F5921N (Odysseus) and G023721N, and by the KU Leuven grant iBOF/23/064.  M.~Drton received funding from the European Research Council (ERC) under the European Union’s Horizon 2020 research and innovation programme (grant agreement No. 883818).

\bibliographystyle{abbrv}
\bibliography{biblio}

\clearpage

\appendix 
\numberwithin{table}{section}
\numberwithin{figure}{section}

\section{Supplementary Material: Proofs}

\begin{lemma}[Dimension of PPCA Tangent Spaces]
\label{lem:ModelDimension}
When $0 \leq k \leq p-1$, the dimension of $$\mathrm{span}\bigg(\{\bl{W}\bl{V}^\intercal + \bl{V}\bl{W}^\intercal : \bl{V} \in \mb{R}^{p \times k} \} \cup \{\bl{I}_p\} \bigg)$$ is equal to $r(p-r) + r(r+1)/2 + 1$ when $\mathrm{rank}(\bl{W}) = r$. 
\end{lemma}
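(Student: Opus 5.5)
The plan is to reduce to a diagonal $\bl{W}$ using the orthogonal symmetries, and then compute the span directly. Write the tangent space as
\[
T(\bl{W}) = \{\bl{W}\bl{V}^\intercal + \bl{V}\bl{W}^\intercal : \bl{V}\in\mb{R}^{p\times k}\} + \mathrm{span}\{\bl{I}_p\}.
\]

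\textbf{Step 1: reduction to a canonical form.} Let $\bl{W} = \bl{P}\bl{S}\bl{R}^\intercal$ be a singular value decomposition with $\bl{P}\in\mathrm{O}(p)$ and $\bl{R}\in\mathrm{O}(k)$. The change of variables $\bl{V}\mapsto \bl{P}\bl{V}\bl{R}$ is bijective on $\mb{R}^{p\times k}$. Conjugation $\bl{M}\mapsto \bl{P}^\intercal\bl{M}\bl{P}$ is a linear isomorphism of symmetric matrices that fixes $\bl{I}_p$. Together these give $T(\bl{W}) \cong T(\bl{S})$. So we may assume
\[
\bl{W} = \begin{bmatrix} \bl{S}_r & \bl{0}\\ \bl{0} & \bl{0}\end{bmatrix},
\]
with $\bl{S}_r = \diag(s_1,\ldots,s_r)$ and all $s_i>0$.

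\textbf{Step 2: the first summand.} Partition $\bl{V}$ conformally into blocks $\bl{V}_{11}\in\mb{R}^{r\times r}$, $\bl{V}_{12}$, $\bl{V}_{21}\in\mb{R}^{(p-r)\times r}$ and $\bl{V}_{22}$. Then
\[
\bl{W}\bl{V}^\intercal + \bl{V}\bl{W}^\intercal = \begin{bmatrix} \bl{S}_r\bl{V}_{11}^\intercal + \bl{V}_{11}\bl{S}_r & \bl{S}_r\bl{V}_{21}^\intercal\\ \bl{V}_{21}\bl{S}_r & \bl{0}\end{bmatrix}.
\]
\begin{itemize}
\item The off-diagonal block ranges over all of $\mb{R}^{(p-r)\times r}$, since $\bl{S}_r$ is invertible. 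This contributes $r(p-r)$ dimensions.
\item The top-left block ranges over all symmetric $r\times r$ matrices. Indeed, for a symmetric target $\bl{B}$, take $\bl{V}_{11} = \tfrac12 \bl{B}\bl{S}_r^{-1}$; then $\bl{S}_r\bl{V}_{11}^\intercal + \bl{V}_{11}\bl{S}_r = \tfrac12\bl{B} + \tfrac12\bl{B} = \bl{B}$. This contributes $r(r+1)/2$ dimensions.
\end{itemize}
Hence this summand is exactly the space of symmetric matrices whose bottom-right $(p-r)\times(p-r)$ block vanishes. Its dimension is $r(p-r)+r(r+1)/2$.

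\textbf{Step 3: adding $\bl{I}_p$.} The identity lies in this space if and only if its bottom-right block $\bl{I}_{p-r}$ is zero, i.e.\ iff $p-r=0$. Since $r\le k\le p-1$, we have $p-r\ge 1$, so $\bl{I}_p\notin$ the first summand. Adding it raises the dimension by exactly one, giving $r(p-r)+r(r+1)/2+1$.

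The only delicate points are the symmetric-block surjectivity in Step 2, which is handled by the explicit choice $\bl{V}_{11}=\tfrac12\bl{B}\bl{S}_r^{-1}$, and the use of $k\le p-1$ in Step 3. This hypothesis is what guarantees the noise direction $\bl{I}_p$ is genuinely new; it explains why the lemma excludes $k=p$.
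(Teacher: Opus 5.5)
Your proof is correct and follows essentially the same route as the paper: reduce via the singular value decomposition, show that the image of $\bl{V}\mapsto\bl{W}\bl{V}^\intercal+\bl{V}\bl{W}^\intercal$ is the space of symmetric matrices with vanishing lower-right $(p-r)\times(p-r)$ block, and use $p-r\ge 1$ to see that $\bl{I}_p$ adds exactly one dimension. Using only orthogonal transformations is slightly cleaner than the paper's rescaled $\bl{U}$, which strictly speaking carries $\bl{I}_p$ to $\bl{U}\bl{U}^\intercal$; this is harmless there, since only the nonzero lower-right block is used. One small typo: your substitution in Step 1 should read $\bl{V}\mapsto\bl{P}\bl{V}\bl{R}^\intercal$ rather than $\bl{P}\bl{V}\bl{R}$.
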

\begin{proof}
The dimension of this subspace is unchanged after it is transformed by an invertible linear transformation. It is also unchanged when $\bl{W}$ is replaced by $\bl{W}\bl{O}$ for an orthogonal $\bl{O} \in \mathrm{O}(k)$, because $\bl{V} \mapsto \bl{V}\bl{O}$ is a bijection of $\mb{R}^{p \times k}$. Applying such an $\bl{O}$ obtained from a singular value decomposition of $\bl{W}$, we may assume that the last $k-r$ columns of $\bl{W}$ are zero. Let $\bl{U}$ be an invertible $p \times p$ matrix with rows that are orthogonal to each other such that
\begin{align*}
    \bl{U}\bl{W} = \begin{bmatrix}
        \bl{I}_r & \bl{0}_{r \times k-r}
        \\
        \bl{0}_{p-r \times r} & \bl{0}_{p-r \times k-r}
    \end{bmatrix}.
\end{align*}
The first $r$ rows of $\bl{U}$ consist of the first $r$ scaled left singular vectors of $\bl{W}$, of which there are only $r$ with non-zero singular values due to the condition $\text{rank}(\bl{W}) = r$. It suffices to compute the dimension of
\begin{align*}
\text{span}\bigg(\{\bl{U}\bl{W}\bl{V}^\intercal + \bl{V}(\bl{U}\bl{W})^\intercal : \bl{V} \in \mb{R}^{p \times k} \} \cup \{\bl{I}_p\}\bigg).
\end{align*}
The matrix $\bl{U}\bl{W}\bl{V}^\intercal + \bl{V}(\bl{U}\bl{W})^\intercal$ has the form 
\begin{align*}
    \begin{bmatrix}
        \bl{V}_{1:r,1:r} + \bl{V}_{1:r,1:r}^\intercal & \bl{V}_{1:r,r+1:p}^\intercal
        \\
        \bl{V}_{1:r,r+1:p} & \bl{0}_{r+1:p \times r+1:p}
    \end{bmatrix}.
\end{align*}
All such matrices are linearly independent of $\bl{I}_p$ due to the $\bl{0}_{r+1:p \times r+1:p}$ matrix in the lower-right block. The upper-right block can vary freely and contributes a dimension of $r(p-r)$, while the upper-left block can be any symmetric matrix and so it has dimension $\binom{r+1}{2} = r(r+1)/2$. Adding these dimensions, along with the extra dimension from the $\bl{I}_p$ term, yields the result.
\end{proof}

\begin{lemma}
    The set $\mb{R}^{p \times k}_* / \mathrm{O}(k)$ has the structure of a smooth manifold. 
\end{lemma}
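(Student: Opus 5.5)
The plan is to invoke the quotient manifold theorem for proper, free smooth Lie group actions. The orthogonal group $\mathrm{O}(k)$ is a compact Lie group. It acts smoothly on the open subset $\mb{R}^{p\times k}_*\subseteq\mb{R}^{p\times k}$ by right multiplication, $(\bl{W},\bl{U})\mapsto \bl{W}\bl{U}$. The set $\mb{R}^{p\times k}_*$ is itself a smooth manifold, being open: full column rank is the nonvanishing of some $k\times k$ minor. The action preserves it, since $\mathrm{rank}(\bl{W}\bl{U})=\mathrm{rank}(\bl{W})$. The quotient manifold theorem (e.g.\ Lee, \emph{Introduction to Smooth Manifolds}, Thm.~21.10) then gives a unique smooth structure on $\mb{R}^{p\times k}_*/\mathrm{O}(k)$ making the projection a smooth submersion, of dimension $pk-k(k-1)/2$. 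So it suffices to check that the action is smooth, proper and free.

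Smoothness is immediate, since matrix multiplication is polynomial. Properness follows from compactness of $\mathrm{O}(k)$, since every continuous action of a compact Lie group is proper. Concretely, if $K\subseteq\mb{R}^{p\times k}_*\times\mb{R}^{p\times k}_*$ is compact, the preimage of $K$ under $(\bl{W},\bl{U})\mapsto(\bl{W}\bl{U},\bl{W})$ is a closed subset of $\pi_2(K)\times\mathrm{O}(k)$, which is compact.

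Freeness is the only step that uses the full-rank restriction, and it is where the singular locus discussed in Section~\ref{sec:PPCAGeometry} is excluded. Suppose $\bl{W}\bl{U}=\bl{W}$ with $\bl{W}$ of full column rank $k$. Then $\bl{W}^\intercal\bl{W}$ is invertible, and multiplying $\bl{W}(\bl{U}-\bl{I}_k)=\bl{0}$ on the left by $(\bl{W}^\intercal\bl{W})^{-1}\bl{W}^\intercal$ gives $\bl{U}=\bl{I}_k$, so all stabilizers are trivial. Without full rank this fails: if $\bl{W}$ has a zero column, any reflection in that coordinate fixes $\bl{W}$. This is why the full quotient $\mb{R}^{p\times k}/\mathrm{O}(k)$ is not a manifold.

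No step is a real obstacle. The only care needed is to cite the theorem in its proper-and-free form, rather than the version for discrete groups. One may also note that the resulting manifold structure is compatible with $\varphi_k$. The map $\bl{W}\mapsto\bl{W}\bl{W}^\intercal$ is $\mathrm{O}(k)$-invariant, so it descends to a smooth map on the quotient. By Lemma~\ref{lem:ModelDimension} its differential has rank $pk-k(k-1)/2$, so the descended map is an immersion, which is what the later manifold statement for $\mc{M}_k\setminus\mc{M}_{k-1}$ relies on.
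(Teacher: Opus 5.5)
Your proposal is correct and follows the same route as the paper: freeness from the linear independence of the columns of $\bl{W}$, properness from compactness of $\mathrm{O}(k)$, and then the quotient manifold theorem (Lee, Thm.~21.10). Your side remark that non-freeness is ``why'' the full quotient $\mb{R}^{p\times k}/\mathrm{O}(k)$ fails to be a manifold is only heuristic, since a non-free action can still have a manifold quotient, but it plays no role in the proof.
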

\begin{proof}
The group action $\bl{W} \mapsto \bl{W}\bl{U}$ on $\mb{R}^{p \times k}_*$ is free since $\bl{W} = \bl{W}\bl{U}$ if and only if $\bl{U} = \bl{I}_k$ because the columns of $\bl{W}$ are linearly independent. This group action is proper since $\text{O}(k)$ is compact \citep[Cor.~21.6]{LeeSmoothManifolds}. By the quotient manifold theorem \citep[Thm.~21.10]{LeeSmoothManifolds} the orbit space $\mb{R}_*^{p \times k} / \text{O}(k)$ is a smooth manifold. A related result can be found in \citep{FixedRankPDManifoldGeomvandereycken}. 
\end{proof}

\begin{lemma}
\label{lem:NonSingularPointsareManifold}
    The set $\mc{M}_{k} \setminus \mc{M}_{k-1}$ is a smooth manifold when $1\leq k \leq p-1$. It is diffeomorphic to $\mb{R}^{p \times k}_* /\mathrm{O}(k) \times \mb{R}_{> 0}$ via the parameterization map $\varphi_k$. 
\end{lemma}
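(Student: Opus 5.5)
The plan is to show that the map
\[
\tilde\varphi_k:\mb{R}^{p\times k}_*/\mathrm{O}(k)\times\mb{R}_{>0}\to\mc{M}_k\setminus\mc{M}_{k-1},\qquad [\bl{W}]\times\sigma^2\mapsto \bl{W}\bl{W}^\intercal+\sigma^2\bl{I}_p,
\]
induced by $\varphi_k$ is an injective immersion. The domain is a manifold by the preceding lemma, so I would then deduce that the image carries the structure of a smooth manifold diffeomorphic to the domain, and I would check that this structure agrees with the subspace topology of $\mc{S}^p_{++}$.

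\emph{Well-definedness and surjectivity.} Since $\varphi_k(\bl{W}\bl{U},\sigma^2)=\varphi_k(\bl{W},\sigma^2)$ for $\bl{U}\in\mathrm{O}(k)$, the map $\tilde\varphi_k$ is well defined and smooth, because it is smooth upstairs and the quotient map is a submersion. By the eigenvalue characterization in Section~\ref{sec:PPCAGeometry}, $\bs{\Sigma}\in\mc{M}_k\setminus\mc{M}_{k-1}$ exactly when its eigenvalues satisfy $\gamma_k>\gamma_{k+1}=\cdots=\gamma_p$. Equivalently, $\bs{\Sigma}=\bl{W}\bl{W}^\intercal+\sigma^2\bl{I}_p$ with $\mathrm{rank}(\bl{W})=k$. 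So the image is exactly $\mc{M}_k\setminus\mc{M}_{k-1}$.

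\emph{Injectivity.} Given such $\bs{\Sigma}$, the value $\sigma^2=\gamma_p$ is determined, and $k\le p-1$ is needed here. Hence $\bl{W}\bl{W}^\intercal=\bs{\Sigma}-\gamma_p\bl{I}_p$ is determined. Two full-column-rank factors of the same PSD matrix differ by a right orthogonal factor, by the standard argument $\bl{W}_2=\bl{W}_1\bl{U}$ with $\bl{U}=\bl{W}_1^{+}\bl{W}_2$. This gives injectivity.

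\emph{Immersion.} By Lemma~\ref{lem:ModelDimension} with $r=k$, the differential of $\varphi_k$ has rank $k(p-k)+k(k+1)/2+1$. This equals $pk+1-\dim\mathrm{O}(k)=\dim(\mb{R}^{p\times k}_*/\mathrm{O}(k)\times\mb{R}_{>0})$. Since $\varphi_k$ factors through the quotient submersion, $d\tilde\varphi_k$ has full rank, so $\tilde\varphi_k$ is an immersion.

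\emph{Homeomorphism onto the image (main obstacle).} To get an embedding I would show that $\tilde\varphi_k^{-1}$ is continuous. Suppose $\bs{\Sigma}_n\to\bs{\Sigma}$ within $\mc{M}_k\setminus\mc{M}_{k-1}$. The eigenvalues vary continuously, so $\sigma_n^2=\gamma_p(\bs{\Sigma}_n)\to\sigma^2$, and therefore $\bl{W}_n\bl{W}_n^\intercal\to\bl{W}\bl{W}^\intercal$. The representatives $\bl{W}_n$ are bounded, so some subsequence converges to a limit $\bl{W}'$ with $\bl{W}'\bl{W}'^\intercal=\bl{W}\bl{W}^\intercal$. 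Then $\bl{W}'$ has rank $k$, and $\bl{W}'=\bl{W}\bl{U}$. Hence $[\bl{W}_n]\to[\bl{W}]$ in the quotient. A bijective immersion that is a homeomorphism onto its image is an embedding. Therefore $\mc{M}_k\setminus\mc{M}_{k-1}$ is an embedded submanifold of $\mc{S}^p_{++}$, diffeomorphic to the domain via $\varphi_k$.
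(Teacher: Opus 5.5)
Your proof is correct. Its skeleton matches the paper's: show that the induced map on $\mb{R}^{p\times k}_*/\mathrm{O}(k)\times\mb{R}_{>0}$ is an injective immersion, using Lemma~\ref{lem:ModelDimension} for the rank, and then upgrade it to an embedding with a subsequence argument on bounded representatives. The two arguments differ in that last step, and the difference matters.

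The paper invokes Prop.~4.22 of Lee through \emph{properness} of $\varphi_k$ as a map into $\mc{S}^p_{++}$. Its sequential-compactness argument produces a limit $\tilde{\bl{W}}\in\mb{R}^{p\times k}$ but never checks that this limit has full column rank. In fact properness into $\mc{S}^p_{++}$ is false. For $k=1$, the compact set $C=\{\diag(1+t,1,\ldots,1):0\le t\le 1\}$ has preimage $\{([\sqrt{t}\,\bl{e}_1],1):0<t\le 1\}$. This preimage is not compact, because the only candidate limit as $t\to 0$ is $\bl{0}\notin\mb{R}^{p\times 1}_*$.

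Your version only considers sequences whose limit $\bs{\Sigma}$ lies in $\mc{M}_k\setminus\mc{M}_{k-1}$. So $\bl{W}'\bl{W}'^\intercal=\bl{W}\bl{W}^\intercal$ has rank $k$, and the limit representative stays in $\mb{R}^{p\times k}_*$. That is exactly what is needed. Equivalently, the paper's argument becomes valid if the codomain is replaced by the open set $\mc{S}^p_{++}\setminus\mc{M}_{k-1}$. Inside that set $\mc{M}_k\setminus\mc{M}_{k-1}$ is closed, so properness does hold there.

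You also supply two things the paper only asserts: the injectivity argument via $\bl{U}=\bl{W}_1^{+}\bl{W}_2$, and the dimension check $k(p-k)+k(k+1)/2+1=pk+1-\dim\mathrm{O}(k)$, which shows that the differential on the quotient is injective. One routine detail should be stated explicitly: every subsequence of $[\bl{W}_n]$ has a further subsequence converging to $[\bl{W}]$. Only then does the whole sequence converge, and hence the inverse is continuous.
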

\begin{proof}
    The parameterization map $\varphi_k(\bl{W},\sigma^2) = \bl{W}\bl{W}^\intercal + \sigma^2 \bl{I}_p$ is a bijection from $\mb{R}^{p \times k}_* / \text{O}(k) \times \mb{R}_{> 0}$ to $\mc{M}_k \setminus \mc{M}_{k-1}$. Moreover, the Jacobian of $\varphi_k$ has full rank by the same computation as in Lemma \ref{lem:ModelDimension} and so it is an injective, smooth immersion from $\mb{R}^{p \times k}_* / \text{O}(k) \times \mb{R}_{> 0}$ into $\mc{S}^p_{++}$. By Prop.~4.22 of \citep{LeeSmoothManifolds} this is a smooth embedding if the map $\varphi_k$ is proper. Thus, we want to show that if $C \subset \mc{S}_{++}^p$ is compact then $\varphi_k^{-1}(C)$ is compact. Let $\bs{\Sigma}_i = \bl{W}_i\bl{W}_i^\intercal + \sigma^2_i\bl{I}_p$ be a sequence in $C$. Define $E_p: \mc{S}_{++}^p \rightarrow [0,\infty)$ to be the map that returns the smallest eigenvalue of a PSD matrix. This map is continuous as the smallest eigenvalue is the smallest root of the characteristic polynomial, which is a continuous function of the coefficients of the polynomial. That the smallest root of a polynomial with real roots is a continuous function of its coefficients can be shown by an appeal to Sturm's theorem \citep[Thm.~1.6]{TheobaldRealAlgGeom}. We conclude that $E_p(C)$ is compact and so $E_p(\bs{\Sigma}_i) = \sigma^2_i$ is a bounded sequence. This implies that $\bs{\Sigma}_i - \sigma^2_i\bl{I}_p = \bl{W}_i \bl{W}_i^\intercal$ is bounded. If $\tilde{\bl{W}}_i \in \mb{R}^{p \times k}_*$ is any orbit representative of $\bl{W}_i \in \mb{R}^{p \times k}_* / \text{O}(k)$, it must be the case that $\tilde{\bl{W}}_i$ is bounded in $\mb{R}^{p \times k}_*$, as otherwise $\bl{W}_i \bl{W}_i^\intercal = \tilde{\bl{W}}_i \tilde{\bl{W}}_i^\intercal$ would be unbounded because the $(j,j)$ entry of $\tilde{\bl{W}}_i \tilde{\bl{W}}_i^\intercal$ is the norm of the $j$th row of $\tilde{\bl{W}}_i$. We conclude that $\sigma^2_{i_j} \rightarrow \sigma^2$ and $\tilde{\bl{W}}_{i_j} \rightarrow \tilde{\bl{W}}$ for some subsequence $i_j$. It follows that $\varphi_k^{-1}(C)$ is sequentially compact, which is the same as being compact for manifolds \citep[Thm.~4.45]{LeeTopologicalManifolds}.   
\end{proof}

\newpage

\section{Supplementary Material: Simulation Results}
\label{app:SimulationResults}
\raggedbottom

In all simulations reported below, $1000$ data sets of each sample size
$n \in \{30,50,250,4000\}$ were drawn from a centered Gaussian distribution with covariance matrix
$\bs{\Sigma}_0$, and the number of principal components was estimated by the sBIC, the BIC, the two
normal-Gamma variants NG1 and NG2, and the GCV method. Two covariance structures are considered:
the isotropic case $\bs{\Sigma}_0 = \diag(5,\ldots,5,1,\ldots,1)$ and the linear case
$\bs{\Sigma}_0 = \diag(r+1,\ldots,2,1,\ldots,1)$, where $r$ denotes the true number of PCs.

\subsection{Dimension \texorpdfstring{$p = 5$}{p = 5}}

Here $\bs{\Sigma}_0 \in \mb{R}^{5 \times 5}$ and the true number of PCs is $r \in \{0,\ldots,4\}$.
Table \ref{tab:PropP5} and Figure \ref{fig:PropP5} report the proportion of the $1000$ runs in which the estimated number of PCs equals the true number $r$, for both covariance structures. Table \ref{tab:DistP5} and Figure \ref{fig:DistP5} report the average absolute distance between the estimated and the true number of PCs, for both covariance structures.

\begin{table}[H]
\centering
\caption{Proportion of simulations where the model selection procedure correctly estimates the true number of PCs. The dimension of the covariance matrix is $p = 5$ and the table is divided into two cases: $\bs{\Sigma}_0 = \diag(5,\ldots,5,1\ldots,1)$ (isotropic) and $\bs{\Sigma}_0 = \diag(r+1,\ldots,2,1\ldots,1)$ (linear).}
\label{tab:PropP5}
\vspace{2mm}
\begin{tabular}{|r|r|rrrrr||rrrrr|}
\hline
&& \multicolumn{5}{|c||}{True No.\ of PCs, Isotropic} & \multicolumn{5}{c|}{True No.\ of PCs, Linear}
\\
  \hline
 & $n$ & 0 & 1 & 2 & 3 & 4 & 0 & 1 & 2 & 3 & 4 \\ 
  \hline
& 30 & 0.83 & 0.82 & 0.84 & 0.86 & 0.34 & 0.81 & 0.52 & 0.41 & 0.25 & 0.07 \\ 
\textbf{sBIC} & 50 & 0.92 & 0.88 & 0.90 & 0.90 & 0.78 & 0.88 & 0.66 & 0.56 & 0.45 & 0.18 \\ 
  & 250 & 0.97 & 0.96 & 0.97 & 0.96 & 1.00 & 0.97 & 0.98 & 0.97 & 0.97 & 1.00 \\ 
  & 4000 & 1.00 & 1.00 & 1.00 & 0.99 & 1.00 & 1.00 & 1.00 & 1.00 & 0.99 & 1.00 \\ 
  \hline
  & 30 & 0.99 & 0.96 & 0.89 & 0.71 & 0.10 & 0.98 & 0.21 & 0.15 & 0.08 & 0.03 \\ 
  \textbf{BIC} & 50 & 1.00 & 0.99 & 0.98 & 0.95 & 0.34 & 1.00 & 0.31 & 0.24 & 0.19 & 0.08 \\ 
  & 250 & 1.00 & 1.00 & 1.00 & 0.99 & 1.00 & 1.00 & 0.98 & 0.99 & 0.99 & 0.98 \\ 
  & 4000 & 1.00 & 1.00 & 1.00 & 1.00 & 1.00 & 1.00 & 1.00 & 1.00 & 1.00 & 1.00 \\ 
  \hline
  & 30 & 0.01 & 0.13 & 0.32 & 0.35 & 0.97 & 0.02 & 0.35 & 0.36 & 0.41 & 0.94 \\ 
  \textbf{NG1} & 50 & 0.00 & 0.09 & 0.28 & 0.35 & 1.00 & 0.00 & 0.30 & 0.39 & 0.45 & 0.98 \\ 
  & 250 & 0.00 & 0.00 & 0.09 & 0.19 & 1.00 & 0.00 & 0.14 & 0.24 & 0.35 & 1.00 \\ 
  & 4000 & 0.00 & 0.00 & 0.00 & 0.00 & 1.00 & 0.00 & 0.00 & 0.00 & 0.10 & 1.00 \\ 
  \hline
  & 30 & 0.54 & 0.01 & 0.00 & 0.07 & 0.97 & 0.53 & 0.11 & 0.06 & 0.14 & 0.94 \\ 
  \textbf{NG2} & 50 & 0.54 & 0.00 & 0.00 & 0.03 & 1.00 & 0.52 & 0.08 & 0.03 & 0.08 & 0.98 \\ 
  & 250 & 0.53 & 0.00 & 0.00 & 0.00 & 1.00 & 0.55 & 0.00 & 0.00 & 0.00 & 1.00 \\ 
  & 4000 & 0.65 & 0.00 & 0.00 & 0.00 & 1.00 & 0.63 & 0.00 & 0.00 & 0.00 & 1.00 \\ 
  \hline
  & 30 & 0.98 & 0.96 & 0.92 & 0.74 & 0.15 & 0.98 & 0.22 & 0.18 & 0.07 & 0.02 \\ 
\textbf{GCV} & 50 & 1.00 & 0.99 & 0.99 & 0.90 & 0.14 & 1.00 & 0.20 & 0.14 & 0.05 & 0.00 \\ 
& 250 & 1.00 & 1.00 & 1.00 & 1.00 & 0.02 & 1.00 & 0.08 & 0.04 & 0.01 & 0.00 \\ 
& 4000 & 1.00 & 1.00 & 1.00 & 1.00 & 0.00 & 1.00 & 0.00 & 0.00 & 0.00 & 0.00 \\ 
   \hline
\end{tabular}
\end{table}

\begin{figure}[H]
    \centering
\begin{tikzpicture}
\begin{groupplot}[
  group style={group size=4 by 2, horizontal sep=0.75cm, vertical sep=1.1cm},
  width=0.295\linewidth, height=0.235\linewidth,
  xmin=-0.35, xmax=4.35, ymin=-0.04, ymax=1.06,
  xtick={0,1,2,3,4}, ytick={0,0.2,0.4,0.6,0.8,1.0},
  yticklabel style={/pgf/number format/fixed, /pgf/number format/precision=1},
  label style={font=\small\sffamily\sansmath},
  tick label style={font=\footnotesize\sffamily\sansmath},
  title style={font=\small\sffamily\bfseries, yshift=-2pt},
  grid=major, grid style={line width=.2pt, draw=gray!22},
  legend style={font=\footnotesize\sffamily, draw=none, fill=none, legend columns=5,
                /tikz/every even column/.append style={column sep=8pt}},
  xlabel={}, ylabel={},
]
\nextgroupplot[title={n = 30}, ylabel={Isotropic}, legend to name=leg:PropP5]
\addplot[color=sbiccol, mark=*, mark size=1.1pt, thick, dashed] coordinates {(0,0.83) (1,0.82) (2,0.84) (3,0.86) (4,0.34)};
\addplot[color=biccol, mark=square*, mark size=1.1pt, thick, dashed] coordinates {(0,0.99) (1,0.96) (2,0.89) (3,0.71) (4,0.1)};
\addplot[color=ngonecol, mark=triangle*, mark size=1.1pt, thick, dashed] coordinates {(0,0.01) (1,0.13) (2,0.32) (3,0.35) (4,0.97)};
\addplot[color=ngtwocol, mark=diamond*, mark size=1.1pt, thick, dashed] coordinates {(0,0.54) (1,0.01) (2,0) (3,0.07) (4,0.97)};
\addplot[color=gcvcol, mark=pentagon*, mark size=1.1pt, thick, dashed] coordinates {(0,0.98) (1,0.96) (2,0.92) (3,0.74) (4,0.15)};
\legend{sBIC,BIC,NG1,NG2,GCV}

\nextgroupplot[title={n = 50}]
\addplot[color=sbiccol, mark=*, mark size=1.1pt, thick, dashed] coordinates {(0,0.92) (1,0.88) (2,0.9) (3,0.9) (4,0.78)};
\addplot[color=biccol, mark=square*, mark size=1.1pt, thick, dashed] coordinates {(0,1) (1,0.99) (2,0.98) (3,0.95) (4,0.34)};
\addplot[color=ngonecol, mark=triangle*, mark size=1.1pt, thick, dashed] coordinates {(0,0) (1,0.09) (2,0.28) (3,0.35) (4,1)};
\addplot[color=ngtwocol, mark=diamond*, mark size=1.1pt, thick, dashed] coordinates {(0,0.54) (1,0) (2,0) (3,0.03) (4,1)};
\addplot[color=gcvcol, mark=pentagon*, mark size=1.1pt, thick, dashed] coordinates {(0,1) (1,0.99) (2,0.99) (3,0.9) (4,0.14)};

\nextgroupplot[title={n = 250}]
\addplot[color=sbiccol, mark=*, mark size=1.1pt, thick, dashed] coordinates {(0,0.97) (1,0.96) (2,0.97) (3,0.96) (4,1)};
\addplot[color=biccol, mark=square*, mark size=1.1pt, thick, dashed] coordinates {(0,1) (1,1) (2,1) (3,0.99) (4,1)};
\addplot[color=ngonecol, mark=triangle*, mark size=1.1pt, thick, dashed] coordinates {(0,0) (1,0) (2,0.09) (3,0.19) (4,1)};
\addplot[color=ngtwocol, mark=diamond*, mark size=1.1pt, thick, dashed] coordinates {(0,0.53) (1,0) (2,0) (3,0) (4,1)};
\addplot[color=gcvcol, mark=pentagon*, mark size=1.1pt, thick, dashed] coordinates {(0,1) (1,1) (2,1) (3,1) (4,0.02)};

\nextgroupplot[title={n = 4000}]
\addplot[color=sbiccol, mark=*, mark size=1.1pt, thick, dashed] coordinates {(0,1) (1,1) (2,1) (3,0.99) (4,1)};
\addplot[color=biccol, mark=square*, mark size=1.1pt, thick, dashed] coordinates {(0,1) (1,1) (2,1) (3,1) (4,1)};
\addplot[color=ngonecol, mark=triangle*, mark size=1.1pt, thick, dashed] coordinates {(0,0) (1,0) (2,0) (3,0) (4,1)};
\addplot[color=ngtwocol, mark=diamond*, mark size=1.1pt, thick, dashed] coordinates {(0,0.65) (1,0) (2,0) (3,0) (4,1)};
\addplot[color=gcvcol, mark=pentagon*, mark size=1.1pt, thick, dashed] coordinates {(0,1) (1,1) (2,1) (3,1) (4,0)};

\nextgroupplot[ylabel={Linear}]
\addplot[color=sbiccol, mark=*, mark size=1.1pt, thick, dashed] coordinates {(0,0.81) (1,0.52) (2,0.41) (3,0.25) (4,0.07)};
\addplot[color=biccol, mark=square*, mark size=1.1pt, thick, dashed] coordinates {(0,0.98) (1,0.21) (2,0.15) (3,0.08) (4,0.03)};
\addplot[color=ngonecol, mark=triangle*, mark size=1.1pt, thick, dashed] coordinates {(0,0.02) (1,0.35) (2,0.36) (3,0.41) (4,0.94)};
\addplot[color=ngtwocol, mark=diamond*, mark size=1.1pt, thick, dashed] coordinates {(0,0.53) (1,0.11) (2,0.06) (3,0.14) (4,0.94)};
\addplot[color=gcvcol, mark=pentagon*, mark size=1.1pt, thick, dashed] coordinates {(0,0.98) (1,0.22) (2,0.18) (3,0.07) (4,0.02)};

\nextgroupplot[]
\addplot[color=sbiccol, mark=*, mark size=1.1pt, thick, dashed] coordinates {(0,0.88) (1,0.66) (2,0.56) (3,0.45) (4,0.18)};
\addplot[color=biccol, mark=square*, mark size=1.1pt, thick, dashed] coordinates {(0,1) (1,0.31) (2,0.24) (3,0.19) (4,0.08)};
\addplot[color=ngonecol, mark=triangle*, mark size=1.1pt, thick, dashed] coordinates {(0,0) (1,0.3) (2,0.39) (3,0.45) (4,0.98)};
\addplot[color=ngtwocol, mark=diamond*, mark size=1.1pt, thick, dashed] coordinates {(0,0.52) (1,0.08) (2,0.03) (3,0.08) (4,0.98)};
\addplot[color=gcvcol, mark=pentagon*, mark size=1.1pt, thick, dashed] coordinates {(0,1) (1,0.2) (2,0.14) (3,0.05) (4,0)};

\nextgroupplot[]
\addplot[color=sbiccol, mark=*, mark size=1.1pt, thick, dashed] coordinates {(0,0.97) (1,0.98) (2,0.97) (3,0.97) (4,1)};
\addplot[color=biccol, mark=square*, mark size=1.1pt, thick, dashed] coordinates {(0,1) (1,0.98) (2,0.99) (3,0.99) (4,0.98)};
\addplot[color=ngonecol, mark=triangle*, mark size=1.1pt, thick, dashed] coordinates {(0,0) (1,0.14) (2,0.24) (3,0.35) (4,1)};
\addplot[color=ngtwocol, mark=diamond*, mark size=1.1pt, thick, dashed] coordinates {(0,0.55) (1,0) (2,0) (3,0) (4,1)};
\addplot[color=gcvcol, mark=pentagon*, mark size=1.1pt, thick, dashed] coordinates {(0,1) (1,0.08) (2,0.04) (3,0.01) (4,0)};

\nextgroupplot[]
\addplot[color=sbiccol, mark=*, mark size=1.1pt, thick, dashed] coordinates {(0,1) (1,1) (2,1) (3,0.99) (4,1)};
\addplot[color=biccol, mark=square*, mark size=1.1pt, thick, dashed] coordinates {(0,1) (1,1) (2,1) (3,1) (4,1)};
\addplot[color=ngonecol, mark=triangle*, mark size=1.1pt, thick, dashed] coordinates {(0,0) (1,0) (2,0) (3,0.1) (4,1)};
\addplot[color=ngtwocol, mark=diamond*, mark size=1.1pt, thick, dashed] coordinates {(0,0.63) (1,0) (2,0) (3,0) (4,1)};
\addplot[color=gcvcol, mark=pentagon*, mark size=1.1pt, thick, dashed] coordinates {(0,1) (1,0) (2,0) (3,0) (4,0)};
\end{groupplot}
\node[below=5mm, font=\small\sffamily\sansmath]
  at ($(group c2r2.south east)!0.5!(group c3r2.south west)$) {True Number of PCs $r$};
\end{tikzpicture}

\vspace{1mm}
\ref*{leg:PropP5}

    \caption{Proportion of the $1000$ simulations in which the correct model is identified, for $p = 5$. The upper row shows the isotropic case $\bs{\Sigma}_0 = \diag(5,\ldots,5,1\ldots,1)$ and the lower row the linear case $\bs{\Sigma}_0 = \diag(r+1,\ldots,2,1\ldots,1)$. This is the data of Table \ref{tab:PropP5}.}
    \label{fig:PropP5}
\end{figure}

\begin{table}[H]
\centering
\caption{Average distances between the estimated and the true number of PCs. The dimension of the covariance matrix is $p = 5$ and the table is divided into two cases: $\bs{\Sigma}_0 = \diag(5,\ldots,5,1\ldots,1)$ (isotropic) and $\bs{\Sigma}_0 = \diag(r+1,\ldots,2,1\ldots,1)$ (linear).}
\label{tab:DistP5}
\vspace{2mm}
\begin{tabular}{|r|r|rrrrr||rrrrr|}
\hline
&& \multicolumn{5}{|c||}{True No.\ of PCs, Isotropic} & \multicolumn{5}{c|}{True No.\ of PCs, Linear}
\\
  \hline
 & $n$ & 0 & 1 & 2 & 3 & 4 & 0 & 1 & 2 & 3 & 4 \\ 
  \hline
&30  & 0.20 & 0.20 & 0.19 & 0.16 & 1.94 & 0.21 & 0.50 & 0.72 & 1.17 & 2.25 \\ 
  \textbf{sBIC} & 50 & 0.09 & 0.13 & 0.11 & 0.10 & 0.67 & 0.13 & 0.35 & 0.48 & 0.69 & 1.59 \\ 
  & 250 & 0.03 & 0.04 & 0.03 & 0.04 & 0.00 & 0.02 & 0.02 & 0.03 & 0.03 & 0.00 \\ 
  & 4000 & 0.00 & 0.00 & 0.00 & 0.00 & 0.00 & 0.00 & 0.00 & 0.00 & 0.01 & 0.00 \\ 
  \hline
  & 30 & 0.01 & 0.04 & 0.14 & 0.62 & 3.49 & 0.01 & 0.79 & 1.36 & 2.12 & 3.34 \\ 
 \textbf{BIC} & 50 & 0.00 & 0.01 & 0.02 & 0.08 & 2.58 & 0.00 & 0.69 & 1.09 & 1.53 & 2.84 \\ 
  & 250 & 0.00 & 0.00 & 0.00 & 0.00 & 0.00 & 0.00 & 0.02 & 0.01 & 0.01 & 0.02 \\ 
  & 4000 & 0.00 & 0.00 & 0.00 & 0.00 & 0.00 & 0.00 & 0.00 & 0.00 & 0.00 & 0.00 \\ 
  \hline
  & 30 & 1.03 & 0.89 & 0.71 & 0.65 & 0.03 & 1.04 & 0.76 & 0.76 & 0.59 & 0.06 \\ 
  \textbf{NG1} & 50 & 1.01 & 0.91 & 0.74 & 0.65 & 0.00 & 1.01 & 0.74 & 0.65 & 0.55 & 0.02 \\ 
  & 250 & 1.00 & 0.99 & 0.91 & 0.81 & 0.00 & 1.00 & 0.87 & 0.76 & 0.65 & 0.00 \\ 
  & 4000 & 1.00 & 1.00 & 1.00 & 1.00 & 0.00 & 1.00 & 1.00 & 1.00 & 0.90 & 0.00 \\ 
  \hline
  & 30 & 1.57 & 2.54 & 1.82 & 0.93 & 0.03 & 1.64 & 2.08 & 1.68 & 0.86 & 0.06 \\ 
  \textbf{NG2} & 50 & 1.57 & 2.62 & 1.91 & 0.97 & 0.00 & 1.61 & 2.16 & 1.71 & 0.92 & 0.02 \\ 
  & 250 & 1.25 & 2.82 & 1.99 & 1.00 & 0.00 & 1.21 & 2.48 & 1.94 & 1.00 & 0.00 \\ 
  & 4000 & 0.41 & 3.00 & 2.00 & 1.00 & 0.00 & 0.45 & 2.66 & 2.00 & 1.00 & 0.00 \\ 
  \hline
  & 30 & 0.02 & 0.04 & 0.10 & 0.54 & 3.23 & 0.01 & 0.78 & 1.30 & 2.07 & 3.29 \\ 
  \textbf{GCV} & 50 & 0.00 & 0.01 & 0.01 & 0.22 & 3.41 & 0.00 & 0.80 & 1.33 & 2.03 & 3.45 \\ 
  & 250 & 0.00 & 0.00 & 0.00 & 0.00 & 3.90 & 0.00 & 0.92 & 1.26 & 2.09 & 3.88 \\ 
  & 4000 & 0.00 & 0.00 & 0.00 & 0.00 & 4.00 & 0.00 & 1.00 & 1.00 & 2.21 & 4.00 \\ 
   \hline
\end{tabular}
\end{table}

\begin{figure}[H]
    \centering
\begin{tikzpicture}
\begin{groupplot}[
  group style={group size=4 by 2, horizontal sep=0.75cm, vertical sep=1.1cm},
  width=0.295\linewidth, height=0.235\linewidth,
  xmin=-0.35, xmax=4.35, ymin=-0.16, ymax=4.24,
  xtick={0,1,2,3,4}, ytick={0,1,2,3,4},
  yticklabel style={/pgf/number format/fixed, /pgf/number format/precision=0},
  label style={font=\small\sffamily\sansmath},
  tick label style={font=\footnotesize\sffamily\sansmath},
  title style={font=\small\sffamily\bfseries, yshift=-2pt},
  grid=major, grid style={line width=.2pt, draw=gray!22},
  legend style={font=\footnotesize\sffamily, draw=none, fill=none, legend columns=5,
                /tikz/every even column/.append style={column sep=8pt}},
  xlabel={}, ylabel={},
]
\nextgroupplot[title={n = 30}, ylabel={Isotropic}, legend to name=leg:DistP5]
\addplot[color=sbiccol, mark=*, mark size=1.1pt, thick, dashed] coordinates {(0,0.2) (1,0.2) (2,0.19) (3,0.16) (4,1.94)};
\addplot[color=biccol, mark=square*, mark size=1.1pt, thick, dashed] coordinates {(0,0.01) (1,0.04) (2,0.14) (3,0.62) (4,3.49)};
\addplot[color=ngonecol, mark=triangle*, mark size=1.1pt, thick, dashed] coordinates {(0,1.03) (1,0.89) (2,0.71) (3,0.65) (4,0.03)};
\addplot[color=ngtwocol, mark=diamond*, mark size=1.1pt, thick, dashed] coordinates {(0,1.57) (1,2.54) (2,1.82) (3,0.93) (4,0.03)};
\addplot[color=gcvcol, mark=pentagon*, mark size=1.1pt, thick, dashed] coordinates {(0,0.02) (1,0.04) (2,0.1) (3,0.54) (4,3.23)};
\legend{sBIC,BIC,NG1,NG2,GCV}

\nextgroupplot[title={n = 50}]
\addplot[color=sbiccol, mark=*, mark size=1.1pt, thick, dashed] coordinates {(0,0.09) (1,0.13) (2,0.11) (3,0.1) (4,0.67)};
\addplot[color=biccol, mark=square*, mark size=1.1pt, thick, dashed] coordinates {(0,0) (1,0.01) (2,0.02) (3,0.08) (4,2.58)};
\addplot[color=ngonecol, mark=triangle*, mark size=1.1pt, thick, dashed] coordinates {(0,1.01) (1,0.91) (2,0.74) (3,0.65) (4,0)};
\addplot[color=ngtwocol, mark=diamond*, mark size=1.1pt, thick, dashed] coordinates {(0,1.57) (1,2.62) (2,1.91) (3,0.97) (4,0)};
\addplot[color=gcvcol, mark=pentagon*, mark size=1.1pt, thick, dashed] coordinates {(0,0) (1,0.01) (2,0.01) (3,0.22) (4,3.41)};

\nextgroupplot[title={n = 250}]
\addplot[color=sbiccol, mark=*, mark size=1.1pt, thick, dashed] coordinates {(0,0.03) (1,0.04) (2,0.03) (3,0.04) (4,0)};
\addplot[color=biccol, mark=square*, mark size=1.1pt, thick, dashed] coordinates {(0,0) (1,0) (2,0) (3,0) (4,0)};
\addplot[color=ngonecol, mark=triangle*, mark size=1.1pt, thick, dashed] coordinates {(0,1) (1,0.99) (2,0.91) (3,0.81) (4,0)};
\addplot[color=ngtwocol, mark=diamond*, mark size=1.1pt, thick, dashed] coordinates {(0,1.25) (1,2.82) (2,1.99) (3,1) (4,0)};
\addplot[color=gcvcol, mark=pentagon*, mark size=1.1pt, thick, dashed] coordinates {(0,0) (1,0) (2,0) (3,0) (4,3.9)};

\nextgroupplot[title={n = 4000}]
\addplot[color=sbiccol, mark=*, mark size=1.1pt, thick, dashed] coordinates {(0,0) (1,0) (2,0) (3,0) (4,0)};
\addplot[color=biccol, mark=square*, mark size=1.1pt, thick, dashed] coordinates {(0,0) (1,0) (2,0) (3,0) (4,0)};
\addplot[color=ngonecol, mark=triangle*, mark size=1.1pt, thick, dashed] coordinates {(0,1) (1,1) (2,1) (3,1) (4,0)};
\addplot[color=ngtwocol, mark=diamond*, mark size=1.1pt, thick, dashed] coordinates {(0,0.41) (1,3) (2,2) (3,1) (4,0)};
\addplot[color=gcvcol, mark=pentagon*, mark size=1.1pt, thick, dashed] coordinates {(0,0) (1,0) (2,0) (3,0) (4,4)};

\nextgroupplot[ylabel={Linear}]
\addplot[color=sbiccol, mark=*, mark size=1.1pt, thick, dashed] coordinates {(0,0.21) (1,0.5) (2,0.72) (3,1.17) (4,2.25)};
\addplot[color=biccol, mark=square*, mark size=1.1pt, thick, dashed] coordinates {(0,0.01) (1,0.79) (2,1.36) (3,2.12) (4,3.34)};
\addplot[color=ngonecol, mark=triangle*, mark size=1.1pt, thick, dashed] coordinates {(0,1.04) (1,0.76) (2,0.76) (3,0.59) (4,0.06)};
\addplot[color=ngtwocol, mark=diamond*, mark size=1.1pt, thick, dashed] coordinates {(0,1.64) (1,2.08) (2,1.68) (3,0.86) (4,0.06)};
\addplot[color=gcvcol, mark=pentagon*, mark size=1.1pt, thick, dashed] coordinates {(0,0.01) (1,0.78) (2,1.3) (3,2.07) (4,3.29)};

\nextgroupplot[]
\addplot[color=sbiccol, mark=*, mark size=1.1pt, thick, dashed] coordinates {(0,0.13) (1,0.35) (2,0.48) (3,0.69) (4,1.59)};
\addplot[color=biccol, mark=square*, mark size=1.1pt, thick, dashed] coordinates {(0,0) (1,0.69) (2,1.09) (3,1.53) (4,2.84)};
\addplot[color=ngonecol, mark=triangle*, mark size=1.1pt, thick, dashed] coordinates {(0,1.01) (1,0.74) (2,0.65) (3,0.55) (4,0.02)};
\addplot[color=ngtwocol, mark=diamond*, mark size=1.1pt, thick, dashed] coordinates {(0,1.61) (1,2.16) (2,1.71) (3,0.92) (4,0.02)};
\addplot[color=gcvcol, mark=pentagon*, mark size=1.1pt, thick, dashed] coordinates {(0,0) (1,0.8) (2,1.33) (3,2.03) (4,3.45)};

\nextgroupplot[]
\addplot[color=sbiccol, mark=*, mark size=1.1pt, thick, dashed] coordinates {(0,0.02) (1,0.02) (2,0.03) (3,0.03) (4,0)};
\addplot[color=biccol, mark=square*, mark size=1.1pt, thick, dashed] coordinates {(0,0) (1,0.02) (2,0.01) (3,0.01) (4,0.02)};
\addplot[color=ngonecol, mark=triangle*, mark size=1.1pt, thick, dashed] coordinates {(0,1) (1,0.87) (2,0.76) (3,0.65) (4,0)};
\addplot[color=ngtwocol, mark=diamond*, mark size=1.1pt, thick, dashed] coordinates {(0,1.21) (1,2.48) (2,1.94) (3,1) (4,0)};
\addplot[color=gcvcol, mark=pentagon*, mark size=1.1pt, thick, dashed] coordinates {(0,0) (1,0.92) (2,1.26) (3,2.09) (4,3.88)};

\nextgroupplot[]
\addplot[color=sbiccol, mark=*, mark size=1.1pt, thick, dashed] coordinates {(0,0) (1,0) (2,0) (3,0.01) (4,0)};
\addplot[color=biccol, mark=square*, mark size=1.1pt, thick, dashed] coordinates {(0,0) (1,0) (2,0) (3,0) (4,0)};
\addplot[color=ngonecol, mark=triangle*, mark size=1.1pt, thick, dashed] coordinates {(0,1) (1,1) (2,1) (3,0.9) (4,0)};
\addplot[color=ngtwocol, mark=diamond*, mark size=1.1pt, thick, dashed] coordinates {(0,0.45) (1,2.66) (2,2) (3,1) (4,0)};
\addplot[color=gcvcol, mark=pentagon*, mark size=1.1pt, thick, dashed] coordinates {(0,0) (1,1) (2,1) (3,2.21) (4,4)};
\end{groupplot}
\node[below=5mm, font=\small\sffamily\sansmath]
  at ($(group c2r2.south east)!0.5!(group c3r2.south west)$) {True Number of PCs $r$};
\end{tikzpicture}

\vspace{1mm}
\ref*{leg:DistP5}

    \caption{Average absolute distance between the estimated and the true number of PCs, for $p = 5$. The upper row shows the isotropic case $\bs{\Sigma}_0 = \diag(5,\ldots,5,1\ldots,1)$ and the lower row the linear case $\bs{\Sigma}_0 = \diag(r+1,\ldots,2,1\ldots,1)$. This is the data of Table \ref{tab:DistP5}.}
    \label{fig:DistP5}
\end{figure}

\subsection{Dimension \texorpdfstring{$p = 20$}{p = 20}}

Here $\bs{\Sigma}_0 \in \mb{R}^{20 \times 20}$ and the true number of PCs is $r \in \{0,\ldots,19\}$. The two covariance structures are reported in separate tables.
Table \ref{tab:PropIsotropicP20} and Figure \ref{fig:PropIsotropicP20} report the proportion of the $1000$ runs in which the estimated number of PCs equals the true number $r$, in the isotropic case. Table \ref{tab:PropLinearP20} and Figure \ref{fig:PropLinearP20} report the same proportions in the linear case. Figure \ref{fig:PropLinearP20} repeats Figure \ref{fig:LinearEigenvalueComparisonPlot} of the main text; it is reproduced here so that every table in this appendix is accompanied by the corresponding plot. Table \ref{tab:DistIsotropicP20} and Figure \ref{fig:DistIsotropicP20} report the average absolute distance between the estimated and the true number of PCs, in the isotropic case. Table \ref{tab:DistLinearP20} and Figure \ref{fig:DistLinearP20} report the average absolute distance between the estimated and the true number of PCs, in the linear case.

\begin{table}[H]
\centering
\caption{Proportion of the time a correct model is selected by various methods. The true covariance matrix is the $20 \times 20$ matrix $\bs{\Sigma}_0 = \diag(5,\ldots,5,1\ldots,1)$. The sample size $n$ and true number of PCs appear in the rows and columns of the table.}
\label{tab:PropIsotropicP20}
\vspace{2mm}
{\small
\begin{tabular}{|r|r|rrrrrrrrrr|}
\hline
& & \multicolumn{10}{|c|}{True Number of PCs}
\\
  \hline
 & $n$ & 0 & 1 & 2 & 3 & 4 & 5 & 6 & 7 & 8 & 9 \\
  \hline
 & 30 & 0.81 & 0.81 & 0.77 & 0.76 & 0.68 & 0.59 & 0.44 & 0.32 & 0.18 & 0.08 \\
\textbf{sBIC} & 50 & 0.94 & 0.94 & 0.93 & 0.93 & 0.94 & 0.90 & 0.83 & 0.71 & 0.56 & 0.42 \\
 & 250 & 1.00 & 1.00 & 1.00 & 0.99 & 1.00 & 0.99 & 1.00 & 1.00 & 0.99 & 0.99 \\
 & 4000 & 1.00 & 1.00 & 1.00 & 1.00 & 1.00 & 1.00 & 1.00 & 1.00 & 1.00 & 1.00 \\
  \hline
 & 30 & 1.00 & 0.67 & 0.40 & 0.24 & 0.11 & 0.03 & 0.01 & 0.00 & 0.00 & 0.00 \\
\textbf{BIC} & 50 & 1.00 & 0.91 & 0.82 & 0.67 & 0.55 & 0.40 & 0.23 & 0.11 & 0.03 & 0.00 \\
 & 250 & 1.00 & 1.00 & 1.00 & 1.00 & 1.00 & 1.00 & 1.00 & 1.00 & 1.00 & 1.00 \\
 & 4000 & 1.00 & 1.00 & 1.00 & 1.00 & 1.00 & 1.00 & 1.00 & 1.00 & 1.00 & 1.00 \\
  \hline
 & 30 & 0.00 & 0.16 & 0.31 & 0.42 & 0.47 & 0.48 & 0.46 & 0.43 & 0.41 & 0.42 \\
\textbf{NG1} & 50 & 0.00 & 0.11 & 0.28 & 0.43 & 0.49 & 0.56 & 0.55 & 0.56 & 0.55 & 0.49 \\
 & 250 & 0.00 & 0.00 & 0.13 & 0.37 & 0.55 & 0.69 & 0.73 & 0.78 & 0.81 & 0.84 \\
 & 4000 & 0.00 & 0.00 & 0.00 & 0.06 & 0.67 & 0.96 & 1.00 & 1.00 & 1.00 & 1.00 \\
  \hline
 & 30 & 0.49 & 0.01 & 0.00 & 0.00 & 0.00 & 0.00 & 0.00 & 0.00 & 0.00 & 0.00 \\
\textbf{NG2} & 50 & 0.49 & 0.00 & 0.00 & 0.00 & 0.00 & 0.00 & 0.00 & 0.00 & 0.00 & 0.00 \\
 & 250 & 0.51 & 0.00 & 0.00 & 0.00 & 0.00 & 0.00 & 0.00 & 0.00 & 0.00 & 0.00 \\
 & 4000 & 0.61 & 0.00 & 0.00 & 0.00 & 0.00 & 0.00 & 0.00 & 0.00 & 0.00 & 0.00 \\
  \hline
 & 30 & 0.99 & 0.71 & 0.74 & 0.70 & 0.67 & 0.62 & 0.49 & 0.42 & 0.30 & 0.21 \\
\textbf{GCV} & 50 & 1.00 & 0.92 & 0.91 & 0.90 & 0.90 & 0.91 & 0.88 & 0.83 & 0.78 & 0.70 \\
 & 250 & 1.00 & 1.00 & 1.00 & 1.00 & 1.00 & 1.00 & 1.00 & 1.00 & 1.00 & 1.00 \\
 & 4000 & 1.00 & 1.00 & 1.00 & 1.00 & 1.00 & 1.00 & 1.00 & 1.00 & 1.00 & 1.00 \\
  \hline
\end{tabular}

\vspace{2mm}

\begin{tabular}{|r|r|rrrrrrrrrr|}
\hline
& & \multicolumn{10}{|c|}{True Number of PCs}
\\
  \hline
 & $n$ & 10 & 11 & 12 & 13 & 14 & 15 & 16 & 17 & 18 & 19 \\
  \hline
 & 30 & 0.04 & 0.02 & 0.00 & 0.00 & 0.00 & 0.00 & 0.00 & 0.00 & 0.00 & 0.00 \\
\textbf{sBIC} & 50 & 0.26 & 0.09 & 0.02 & 0.00 & 0.00 & 0.00 & 0.00 & 0.00 & 0.00 & 0.00 \\
 & 250 & 0.98 & 0.98 & 0.99 & 0.98 & 1.00 & 1.00 & 0.89 & 0.00 & 0.00 & 0.00 \\
 & 4000 & 1.00 & 1.00 & 1.00 & 1.00 & 1.00 & 1.00 & 1.00 & 1.00 & 0.98 & 1.00 \\
  \hline
 & 30 & 0.00 & 0.00 & 0.00 & 0.00 & 0.00 & 0.00 & 0.00 & 0.00 & 0.00 & 0.00 \\
\textbf{BIC} & 50 & 0.00 & 0.00 & 0.00 & 0.00 & 0.00 & 0.00 & 0.00 & 0.00 & 0.00 & 0.00 \\
 & 250 & 1.00 & 1.00 & 1.00 & 1.00 & 1.00 & 0.68 & 0.00 & 0.00 & 0.00 & 0.00 \\
 & 4000 & 1.00 & 1.00 & 1.00 & 1.00 & 1.00 & 1.00 & 1.00 & 1.00 & 1.00 & 1.00 \\
  \hline
 & 30 & 0.42 & 0.38 & 0.38 & 0.37 & 0.35 & 0.32 & 0.30 & 0.32 & 0.31 & 0.61 \\
\textbf{NG1} & 50 & 0.49 & 0.45 & 0.46 & 0.48 & 0.43 & 0.39 & 0.40 & 0.40 & 0.37 & 0.62 \\
 & 250 & 0.83 & 0.83 & 0.82 & 0.81 & 0.79 & 0.77 & 0.75 & 0.74 & 0.71 & 0.80 \\
 & 4000 & 1.00 & 1.00 & 1.00 & 1.00 & 1.00 & 1.00 & 1.00 & 1.00 & 1.00 & 1.00 \\
  \hline
 & 30 & 0.00 & 0.00 & 0.00 & 0.00 & 0.00 & 0.02 & 0.04 & 0.12 & 0.21 & 0.61 \\
\textbf{NG2} & 50 & 0.00 & 0.00 & 0.00 & 0.00 & 0.00 & 0.00 & 0.01 & 0.10 & 0.27 & 0.62 \\
 & 250 & 0.00 & 0.00 & 0.00 & 0.00 & 0.00 & 0.00 & 0.00 & 0.00 & 0.22 & 0.80 \\
 & 4000 & 0.00 & 0.00 & 0.00 & 0.00 & 0.00 & 0.00 & 0.00 & 0.00 & 0.00 & 1.00 \\
  \hline
 & 30 & 0.14 & 0.11 & 0.05 & 0.02 & 0.01 & 0.00 & 0.00 & 0.00 & 0.00 & 0.00 \\
\textbf{GCV} & 50 & 0.62 & 0.47 & 0.39 & 0.20 & 0.06 & 0.01 & 0.00 & 0.00 & 0.00 & 0.00 \\
 & 250 & 1.00 & 1.00 & 1.00 & 1.00 & 0.98 & 0.13 & 0.00 & 0.00 & 0.00 & 0.00 \\
 & 4000 & 1.00 & 1.00 & 1.00 & 1.00 & 1.00 & 0.39 & 0.00 & 0.00 & 0.00 & 0.00 \\
  \hline
\end{tabular}
}
\end{table}

\begin{figure}[H]
    \centering
\begin{tikzpicture}
\begin{groupplot}[
  group style={group size=2 by 2, horizontal sep=1.3cm, vertical sep=1.5cm,
               xlabels at=edge bottom, ylabels at=edge left},
  width=0.505\linewidth, height=0.40\linewidth,
  xmin=-0.6, xmax=19.6, ymin=-0.04, ymax=1.04,
  xtick={0,5,10,15,19}, ytick={0,0.2,0.4,0.6,0.8,1.0},
  yticklabel style={/pgf/number format/fixed, /pgf/number format/precision=1},
  label style={font=\small\sffamily\sansmath},
  tick label style={font=\footnotesize\sffamily\sansmath},
  title style={font=\small\sffamily\bfseries, yshift=-2pt},
  grid=major, grid style={line width=.2pt, draw=gray!22},
  legend style={font=\footnotesize\sffamily, draw=none, fill=none, legend columns=5,
                /tikz/every even column/.append style={column sep=8pt}},
  xlabel={True Number of PCs $r$}, ylabel={Proportion Correct},
]
\nextgroupplot[title={n = 30}, legend to name=leg:PropIsoP20]
\addplot[color=sbiccol, mark=*, mark size=1.1pt, thick, dashed] coordinates {(0,0.81) (1,0.81) (2,0.77) (3,0.76) (4,0.68) (5,0.59) (6,0.44) (7,0.32) (8,0.18) (9,0.08) (10,0.04) (11,0.02) (12,0) (13,0) (14,0) (15,0) (16,0) (17,0) (18,0) (19,0)};
\addplot[color=biccol, mark=square*, mark size=1.1pt, thick, dashed] coordinates {(0,1) (1,0.67) (2,0.4) (3,0.24) (4,0.11) (5,0.03) (6,0.01) (7,0) (8,0) (9,0) (10,0) (11,0) (12,0) (13,0) (14,0) (15,0) (16,0) (17,0) (18,0) (19,0)};
\addplot[color=ngonecol, mark=triangle*, mark size=1.1pt, thick, dashed] coordinates {(0,0) (1,0.16) (2,0.31) (3,0.42) (4,0.47) (5,0.48) (6,0.46) (7,0.43) (8,0.41) (9,0.42) (10,0.42) (11,0.38) (12,0.38) (13,0.37) (14,0.35) (15,0.32) (16,0.3) (17,0.32) (18,0.31) (19,0.61)};
\addplot[color=ngtwocol, mark=diamond*, mark size=1.1pt, thick, dashed] coordinates {(0,0.49) (1,0.01) (2,0) (3,0) (4,0) (5,0) (6,0) (7,0) (8,0) (9,0) (10,0) (11,0) (12,0) (13,0) (14,0) (15,0.02) (16,0.04) (17,0.12) (18,0.21) (19,0.61)};
\addplot[color=gcvcol, mark=pentagon*, mark size=1.1pt, thick, dashed] coordinates {(0,0.99) (1,0.71) (2,0.74) (3,0.7) (4,0.67) (5,0.62) (6,0.49) (7,0.42) (8,0.3) (9,0.21) (10,0.14) (11,0.11) (12,0.05) (13,0.02) (14,0.01) (15,0) (16,0) (17,0) (18,0) (19,0)};
\legend{sBIC,BIC,NG1,NG2,GCV}

\nextgroupplot[title={n = 50}]
\addplot[color=sbiccol, mark=*, mark size=1.1pt, thick, dashed] coordinates {(0,0.94) (1,0.94) (2,0.93) (3,0.93) (4,0.94) (5,0.9) (6,0.83) (7,0.71) (8,0.56) (9,0.42) (10,0.26) (11,0.09) (12,0.02) (13,0) (14,0) (15,0) (16,0) (17,0) (18,0) (19,0)};
\addplot[color=biccol, mark=square*, mark size=1.1pt, thick, dashed] coordinates {(0,1) (1,0.91) (2,0.82) (3,0.67) (4,0.55) (5,0.4) (6,0.23) (7,0.11) (8,0.03) (9,0) (10,0) (11,0) (12,0) (13,0) (14,0) (15,0) (16,0) (17,0) (18,0) (19,0)};
\addplot[color=ngonecol, mark=triangle*, mark size=1.1pt, thick, dashed] coordinates {(0,0) (1,0.11) (2,0.28) (3,0.43) (4,0.49) (5,0.56) (6,0.55) (7,0.56) (8,0.55) (9,0.49) (10,0.49) (11,0.45) (12,0.46) (13,0.48) (14,0.43) (15,0.39) (16,0.4) (17,0.4) (18,0.37) (19,0.62)};
\addplot[color=ngtwocol, mark=diamond*, mark size=1.1pt, thick, dashed] coordinates {(0,0.49) (1,0) (2,0) (3,0) (4,0) (5,0) (6,0) (7,0) (8,0) (9,0) (10,0) (11,0) (12,0) (13,0) (14,0) (15,0) (16,0.01) (17,0.1) (18,0.27) (19,0.62)};
\addplot[color=gcvcol, mark=pentagon*, mark size=1.1pt, thick, dashed] coordinates {(0,1) (1,0.92) (2,0.91) (3,0.9) (4,0.9) (5,0.91) (6,0.88) (7,0.83) (8,0.78) (9,0.7) (10,0.62) (11,0.47) (12,0.39) (13,0.2) (14,0.06) (15,0.01) (16,0) (17,0) (18,0) (19,0)};

\nextgroupplot[title={n = 250}]
\addplot[color=sbiccol, mark=*, mark size=1.1pt, thick, dashed] coordinates {(0,1) (1,1) (2,1) (3,0.99) (4,1) (5,0.99) (6,1) (7,1) (8,0.99) (9,0.99) (10,0.98) (11,0.98) (12,0.99) (13,0.98) (14,1) (15,1) (16,0.89) (17,0) (18,0) (19,0)};
\addplot[color=biccol, mark=square*, mark size=1.1pt, thick, dashed] coordinates {(0,1) (1,1) (2,1) (3,1) (4,1) (5,1) (6,1) (7,1) (8,1) (9,1) (10,1) (11,1) (12,1) (13,1) (14,1) (15,0.68) (16,0) (17,0) (18,0) (19,0)};
\addplot[color=ngonecol, mark=triangle*, mark size=1.1pt, thick, dashed] coordinates {(0,0) (1,0) (2,0.13) (3,0.37) (4,0.55) (5,0.69) (6,0.73) (7,0.78) (8,0.81) (9,0.84) (10,0.83) (11,0.83) (12,0.82) (13,0.81) (14,0.79) (15,0.77) (16,0.75) (17,0.74) (18,0.71) (19,0.8)};
\addplot[color=ngtwocol, mark=diamond*, mark size=1.1pt, thick, dashed] coordinates {(0,0.51) (1,0) (2,0) (3,0) (4,0) (5,0) (6,0) (7,0) (8,0) (9,0) (10,0) (11,0) (12,0) (13,0) (14,0) (15,0) (16,0) (17,0) (18,0.22) (19,0.8)};
\addplot[color=gcvcol, mark=pentagon*, mark size=1.1pt, thick, dashed] coordinates {(0,1) (1,1) (2,1) (3,1) (4,1) (5,1) (6,1) (7,1) (8,1) (9,1) (10,1) (11,1) (12,1) (13,1) (14,0.98) (15,0.13) (16,0) (17,0) (18,0) (19,0)};

\nextgroupplot[title={n = 4000}]
\addplot[color=sbiccol, mark=*, mark size=1.1pt, thick, dashed] coordinates {(0,1) (1,1) (2,1) (3,1) (4,1) (5,1) (6,1) (7,1) (8,1) (9,1) (10,1) (11,1) (12,1) (13,1) (14,1) (15,1) (16,1) (17,1) (18,0.98) (19,1)};
\addplot[color=biccol, mark=square*, mark size=1.1pt, thick, dashed] coordinates {(0,1) (1,1) (2,1) (3,1) (4,1) (5,1) (6,1) (7,1) (8,1) (9,1) (10,1) (11,1) (12,1) (13,1) (14,1) (15,1) (16,1) (17,1) (18,1) (19,1)};
\addplot[color=ngonecol, mark=triangle*, mark size=1.1pt, thick, dashed] coordinates {(0,0) (1,0) (2,0) (3,0.06) (4,0.67) (5,0.96) (6,1) (7,1) (8,1) (9,1) (10,1) (11,1) (12,1) (13,1) (14,1) (15,1) (16,1) (17,1) (18,1) (19,1)};
\addplot[color=ngtwocol, mark=diamond*, mark size=1.1pt, thick, dashed] coordinates {(0,0.61) (1,0) (2,0) (3,0) (4,0) (5,0) (6,0) (7,0) (8,0) (9,0) (10,0) (11,0) (12,0) (13,0) (14,0) (15,0) (16,0) (17,0) (18,0) (19,1)};
\addplot[color=gcvcol, mark=pentagon*, mark size=1.1pt, thick, dashed] coordinates {(0,1) (1,1) (2,1) (3,1) (4,1) (5,1) (6,1) (7,1) (8,1) (9,1) (10,1) (11,1) (12,1) (13,1) (14,1) (15,0.39) (16,0) (17,0) (18,0) (19,0)};
\end{groupplot}
\end{tikzpicture}

\vspace{1mm}
\ref*{leg:PropIsoP20}

    \caption{Proportion of the $1000$ simulations in which the correct model is identified, for $\bs{\Sigma}_0 = \diag(5,\ldots,5,1\ldots,1) \in \mb{R}^{20 \times 20}$. This is the data of Table \ref{tab:PropIsotropicP20}.}
    \label{fig:PropIsotropicP20}
\end{figure}

\begin{table}[H]
\centering
\caption{Proportion of the time a correct model is selected by various methods. The true covariance matrix is the $20 \times 20$ matrix $\bs{\Sigma}_0 = \diag(r+1,\ldots,2,1\ldots,1)$. The sample size $n$ and true number of PCs appear in the rows and columns of the table. This data is plotted in Fig \ref{fig:LinearEigenvalueComparisonPlot}.}
\label{tab:PropLinearP20}
\vspace{2mm}
{\small
\begin{tabular}{|r|r|rrrrrrrrrr|}
\hline
& & \multicolumn{10}{|c|}{True Number of PCs}
\\
  \hline
 & $n$ & 0 & 1 & 2 & 3 & 4 & 5 & 6 & 7 & 8 & 9 \\
  \hline
 & 30 & 0.80 & 0.31 & 0.21 & 0.20 & 0.14 & 0.10 & 0.08 & 0.09 & 0.06 & 0.04 \\
\textbf{sBIC} & 50 & 0.94 & 0.36 & 0.25 & 0.19 & 0.19 & 0.14 & 0.14 & 0.11 & 0.09 & 0.06 \\
 & 250 & 1.00 & 0.95 & 0.96 & 0.96 & 0.99 & 0.98 & 0.98 & 0.99 & 0.99 & 0.99 \\
 & 4000 & 1.00 & 1.00 & 1.00 & 1.00 & 1.00 & 1.00 & 1.00 & 1.00 & 1.00 & 1.00 \\
  \hline
 & 30 & 1.00 & 0.00 & 0.00 & 0.00 & 0.00 & 0.00 & 0.00 & 0.00 & 0.00 & 0.00 \\
\textbf{BIC} & 50 & 1.00 & 0.00 & 0.00 & 0.00 & 0.00 & 0.00 & 0.00 & 0.00 & 0.00 & 0.00 \\
 & 250 & 1.00 & 0.18 & 0.20 & 0.26 & 0.31 & 0.39 & 0.45 & 0.54 & 0.62 & 0.69 \\
 & 4000 & 1.00 & 1.00 & 1.00 & 1.00 & 1.00 & 1.00 & 1.00 & 1.00 & 1.00 & 1.00 \\
  \hline
 & 30 & 0.00 & 0.39 & 0.41 & 0.43 & 0.45 & 0.46 & 0.46 & 0.46 & 0.43 & 0.40 \\
\textbf{NG1} & 50 & 0.00 & 0.43 & 0.46 & 0.47 & 0.54 & 0.56 & 0.56 & 0.52 & 0.53 & 0.53 \\
 & 250 & 0.00 & 0.33 & 0.49 & 0.57 & 0.69 & 0.75 & 0.82 & 0.87 & 0.89 & 0.89 \\
 & 4000 & 0.00 & 0.05 & 0.48 & 0.76 & 0.97 & 1.00 & 1.00 & 1.00 & 1.00 & 1.00 \\
  \hline
 & 30 & 0.49 & 0.02 & 0.01 & 0.00 & 0.00 & 0.00 & 0.00 & 0.00 & 0.00 & 0.00 \\
\textbf{NG2} & 50 & 0.51 & 0.02 & 0.00 & 0.00 & 0.00 & 0.00 & 0.00 & 0.00 & 0.00 & 0.00 \\
 & 250 & 0.53 & 0.01 & 0.00 & 0.00 & 0.00 & 0.00 & 0.00 & 0.00 & 0.00 & 0.00 \\
 & 4000 & 0.56 & 0.00 & 0.00 & 0.00 & 0.00 & 0.00 & 0.00 & 0.00 & 0.00 & 0.00 \\
  \hline
 & 30 & 0.99 & 0.04 & 0.16 & 0.22 & 0.20 & 0.14 & 0.14 & 0.15 & 0.12 & 0.11 \\
\textbf{GCV} & 50 & 0.99 & 0.10 & 0.29 & 0.28 & 0.26 & 0.27 & 0.25 & 0.21 & 0.20 & 0.17 \\
 & 250 & 1.00 & 0.21 & 0.32 & 0.32 & 0.31 & 0.31 & 0.29 & 0.28 & 0.27 & 0.25 \\
 & 4000 & 1.00 & 0.12 & 0.11 & 0.09 & 0.08 & 0.08 & 0.07 & 0.04 & 0.03 & 0.01 \\
  \hline
\end{tabular}

\vspace{2mm}

\begin{tabular}{|r|r|rrrrrrrrrr|}
\hline
& & \multicolumn{10}{|c|}{True Number of PCs}
\\
  \hline
 & $n$ & 10 & 11 & 12 & 13 & 14 & 15 & 16 & 17 & 18 & 19 \\
  \hline
 & 30 & 0.02 & 0.02 & 0.01 & 0.00 & 0.00 & 0.00 & 0.00 & 0.00 & 0.00 & 0.00 \\
\textbf{sBIC} & 50 & 0.06 & 0.03 & 0.02 & 0.01 & 0.00 & 0.00 & 0.00 & 0.00 & 0.00 & 0.00 \\
 & 250 & 0.99 & 0.99 & 0.99 & 0.99 & 0.98 & 0.98 & 0.97 & 0.96 & 0.96 & 0.78 \\
 & 4000 & 1.00 & 1.00 & 1.00 & 1.00 & 1.00 & 1.00 & 1.00 & 0.99 & 0.99 & 1.00 \\
  \hline
 & 30 & 0.00 & 0.00 & 0.00 & 0.00 & 0.00 & 0.00 & 0.00 & 0.00 & 0.00 & 0.00 \\
\textbf{BIC} & 50 & 0.00 & 0.01 & 0.00 & 0.01 & 0.00 & 0.00 & 0.00 & 0.00 & 0.00 & 0.00 \\
 & 250 & 0.73 & 0.82 & 0.85 & 0.91 & 0.92 & 0.96 & 0.97 & 0.98 & 0.99 & 0.98 \\
 & 4000 & 1.00 & 1.00 & 1.00 & 1.00 & 1.00 & 1.00 & 1.00 & 1.00 & 1.00 & 1.00 \\
  \hline
 & 30 & 0.40 & 0.37 & 0.35 & 0.35 & 0.33 & 0.32 & 0.32 & 0.31 & 0.31 & 0.61 \\
\textbf{NG1} & 50 & 0.51 & 0.45 & 0.47 & 0.44 & 0.42 & 0.40 & 0.40 & 0.38 & 0.37 & 0.63 \\
 & 250 & 0.87 & 0.85 & 0.82 & 0.79 & 0.78 & 0.77 & 0.74 & 0.71 & 0.72 & 0.75 \\
 & 4000 & 1.00 & 1.00 & 1.00 & 1.00 & 1.00 & 1.00 & 1.00 & 1.00 & 1.00 & 1.00 \\
  \hline
 & 30 & 0.00 & 0.00 & 0.00 & 0.00 & 0.00 & 0.01 & 0.05 & 0.14 & 0.25 & 0.61 \\
\textbf{NG2} & 50 & 0.00 & 0.00 & 0.00 & 0.00 & 0.00 & 0.00 & 0.02 & 0.10 & 0.30 & 0.63 \\
 & 250 & 0.00 & 0.00 & 0.00 & 0.00 & 0.00 & 0.00 & 0.00 & 0.01 & 0.28 & 0.75 \\
 & 4000 & 0.00 & 0.00 & 0.00 & 0.00 & 0.00 & 0.00 & 0.00 & 0.00 & 0.02 & 1.00 \\
  \hline
 & 30 & 0.10 & 0.08 & 0.07 & 0.07 & 0.06 & 0.03 & 0.03 & 0.02 & 0.01 & 0.01 \\
\textbf{GCV} & 50 & 0.17 & 0.17 & 0.14 & 0.13 & 0.12 & 0.10 & 0.06 & 0.04 & 0.00 & 0.00 \\
 & 250 & 0.23 & 0.20 & 0.19 & 0.16 & 0.13 & 0.10 & 0.07 & 0.03 & 0.00 & 0.00 \\
 & 4000 & 0.02 & 0.01 & 0.00 & 0.00 & 0.00 & 0.00 & 0.00 & 0.00 & 0.00 & 0.00 \\
  \hline
\end{tabular}
}
\end{table}

\begin{figure}[H]
    \centering
\begin{tikzpicture}
\begin{groupplot}[
  group style={group size=2 by 2, horizontal sep=1.3cm, vertical sep=1.5cm,
               xlabels at=edge bottom, ylabels at=edge left},
  width=0.505\linewidth, height=0.40\linewidth,
  xmin=-0.6, xmax=19.6, ymin=-0.04, ymax=1.04,
  xtick={0,5,10,15,19}, ytick={0,0.2,0.4,0.6,0.8,1.0},
  yticklabel style={/pgf/number format/fixed, /pgf/number format/precision=1},
  label style={font=\small\sffamily\sansmath},
  tick label style={font=\footnotesize\sffamily\sansmath},
  title style={font=\small\sffamily\bfseries, yshift=-2pt},
  grid=major, grid style={line width=.2pt, draw=gray!22},
  legend style={font=\footnotesize\sffamily, draw=none, fill=none, legend columns=5,
                /tikz/every even column/.append style={column sep=8pt}},
  xlabel={True Number of PCs $r$}, ylabel={Proportion Correct},
]
\nextgroupplot[title={n = 30}, legend to name=leg:PropLinP20]
\addplot[color=sbiccol, mark=*, mark size=1.1pt, thick, dashed] coordinates {(0,0.8) (1,0.31) (2,0.21) (3,0.2) (4,0.14) (5,0.1) (6,0.08) (7,0.09) (8,0.06) (9,0.04) (10,0.02) (11,0.02) (12,0.01) (13,0) (14,0) (15,0) (16,0) (17,0) (18,0) (19,0)};
\addplot[color=biccol, mark=square*, mark size=1.1pt, thick, dashed] coordinates {(0,1) (1,0) (2,0) (3,0) (4,0) (5,0) (6,0) (7,0) (8,0) (9,0) (10,0) (11,0) (12,0) (13,0) (14,0) (15,0) (16,0) (17,0) (18,0) (19,0)};
\addplot[color=ngonecol, mark=triangle*, mark size=1.1pt, thick, dashed] coordinates {(0,0) (1,0.39) (2,0.41) (3,0.43) (4,0.45) (5,0.46) (6,0.46) (7,0.46) (8,0.43) (9,0.4) (10,0.4) (11,0.37) (12,0.35) (13,0.35) (14,0.33) (15,0.32) (16,0.32) (17,0.31) (18,0.31) (19,0.61)};
\addplot[color=ngtwocol, mark=diamond*, mark size=1.1pt, thick, dashed] coordinates {(0,0.49) (1,0.02) (2,0.01) (3,0) (4,0) (5,0) (6,0) (7,0) (8,0) (9,0) (10,0) (11,0) (12,0) (13,0) (14,0) (15,0.01) (16,0.05) (17,0.14) (18,0.25) (19,0.61)};
\addplot[color=gcvcol, mark=pentagon*, mark size=1.1pt, thick, dashed] coordinates {(0,0.99) (1,0.04) (2,0.16) (3,0.22) (4,0.2) (5,0.14) (6,0.14) (7,0.15) (8,0.12) (9,0.11) (10,0.1) (11,0.08) (12,0.07) (13,0.07) (14,0.06) (15,0.03) (16,0.03) (17,0.02) (18,0.01) (19,0.01)};
\legend{sBIC,BIC,NG1,NG2,GCV}

\nextgroupplot[title={n = 50}]
\addplot[color=sbiccol, mark=*, mark size=1.1pt, thick, dashed] coordinates {(0,0.94) (1,0.36) (2,0.25) (3,0.19) (4,0.19) (5,0.14) (6,0.14) (7,0.11) (8,0.09) (9,0.06) (10,0.06) (11,0.03) (12,0.02) (13,0.01) (14,0) (15,0) (16,0) (17,0) (18,0) (19,0)};
\addplot[color=biccol, mark=square*, mark size=1.1pt, thick, dashed] coordinates {(0,1) (1,0) (2,0) (3,0) (4,0) (5,0) (6,0) (7,0) (8,0) (9,0) (10,0) (11,0.01) (12,0) (13,0.01) (14,0) (15,0) (16,0) (17,0) (18,0) (19,0)};
\addplot[color=ngonecol, mark=triangle*, mark size=1.1pt, thick, dashed] coordinates {(0,0) (1,0.43) (2,0.46) (3,0.47) (4,0.54) (5,0.56) (6,0.56) (7,0.52) (8,0.53) (9,0.53) (10,0.51) (11,0.45) (12,0.47) (13,0.44) (14,0.42) (15,0.4) (16,0.4) (17,0.38) (18,0.37) (19,0.63)};
\addplot[color=ngtwocol, mark=diamond*, mark size=1.1pt, thick, dashed] coordinates {(0,0.51) (1,0.02) (2,0) (3,0) (4,0) (5,0) (6,0) (7,0) (8,0) (9,0) (10,0) (11,0) (12,0) (13,0) (14,0) (15,0) (16,0.02) (17,0.1) (18,0.3) (19,0.63)};
\addplot[color=gcvcol, mark=pentagon*, mark size=1.1pt, thick, dashed] coordinates {(0,0.99) (1,0.1) (2,0.29) (3,0.28) (4,0.26) (5,0.27) (6,0.25) (7,0.21) (8,0.2) (9,0.17) (10,0.17) (11,0.17) (12,0.14) (13,0.13) (14,0.12) (15,0.1) (16,0.06) (17,0.04) (18,0) (19,0)};

\nextgroupplot[title={n = 250}]
\addplot[color=sbiccol, mark=*, mark size=1.1pt, thick, dashed] coordinates {(0,1) (1,0.95) (2,0.96) (3,0.96) (4,0.99) (5,0.98) (6,0.98) (7,0.99) (8,0.99) (9,0.99) (10,0.99) (11,0.99) (12,0.99) (13,0.99) (14,0.98) (15,0.98) (16,0.97) (17,0.96) (18,0.96) (19,0.78)};
\addplot[color=biccol, mark=square*, mark size=1.1pt, thick, dashed] coordinates {(0,1) (1,0.18) (2,0.2) (3,0.26) (4,0.31) (5,0.39) (6,0.45) (7,0.54) (8,0.62) (9,0.69) (10,0.73) (11,0.82) (12,0.85) (13,0.91) (14,0.92) (15,0.96) (16,0.97) (17,0.98) (18,0.99) (19,0.98)};
\addplot[color=ngonecol, mark=triangle*, mark size=1.1pt, thick, dashed] coordinates {(0,0) (1,0.33) (2,0.49) (3,0.57) (4,0.69) (5,0.75) (6,0.82) (7,0.87) (8,0.89) (9,0.89) (10,0.87) (11,0.85) (12,0.82) (13,0.79) (14,0.78) (15,0.77) (16,0.74) (17,0.71) (18,0.72) (19,0.75)};
\addplot[color=ngtwocol, mark=diamond*, mark size=1.1pt, thick, dashed] coordinates {(0,0.53) (1,0.01) (2,0) (3,0) (4,0) (5,0) (6,0) (7,0) (8,0) (9,0) (10,0) (11,0) (12,0) (13,0) (14,0) (15,0) (16,0) (17,0.01) (18,0.28) (19,0.75)};
\addplot[color=gcvcol, mark=pentagon*, mark size=1.1pt, thick, dashed] coordinates {(0,1) (1,0.21) (2,0.32) (3,0.32) (4,0.31) (5,0.31) (6,0.29) (7,0.28) (8,0.27) (9,0.25) (10,0.23) (11,0.2) (12,0.19) (13,0.16) (14,0.13) (15,0.1) (16,0.07) (17,0.03) (18,0) (19,0)};

\nextgroupplot[title={n = 4000}]
\addplot[color=sbiccol, mark=*, mark size=1.1pt, thick, dashed] coordinates {(0,1) (1,1) (2,1) (3,1) (4,1) (5,1) (6,1) (7,1) (8,1) (9,1) (10,1) (11,1) (12,1) (13,1) (14,1) (15,1) (16,1) (17,0.99) (18,0.99) (19,1)};
\addplot[color=biccol, mark=square*, mark size=1.1pt, thick, dashed] coordinates {(0,1) (1,1) (2,1) (3,1) (4,1) (5,1) (6,1) (7,1) (8,1) (9,1) (10,1) (11,1) (12,1) (13,1) (14,1) (15,1) (16,1) (17,1) (18,1) (19,1)};
\addplot[color=ngonecol, mark=triangle*, mark size=1.1pt, thick, dashed] coordinates {(0,0) (1,0.05) (2,0.48) (3,0.76) (4,0.97) (5,1) (6,1) (7,1) (8,1) (9,1) (10,1) (11,1) (12,1) (13,1) (14,1) (15,1) (16,1) (17,1) (18,1) (19,1)};
\addplot[color=ngtwocol, mark=diamond*, mark size=1.1pt, thick, dashed] coordinates {(0,0.56) (1,0) (2,0) (3,0) (4,0) (5,0) (6,0) (7,0) (8,0) (9,0) (10,0) (11,0) (12,0) (13,0) (14,0) (15,0) (16,0) (17,0) (18,0.02) (19,1)};
\addplot[color=gcvcol, mark=pentagon*, mark size=1.1pt, thick, dashed] coordinates {(0,1) (1,0.12) (2,0.11) (3,0.09) (4,0.08) (5,0.08) (6,0.07) (7,0.04) (8,0.03) (9,0.01) (10,0.02) (11,0.01) (12,0) (13,0) (14,0) (15,0) (16,0) (17,0) (18,0) (19,0)};
\end{groupplot}
\end{tikzpicture}

\vspace{1mm}
\ref*{leg:PropLinP20}

    \caption{Proportion of the $1000$ simulations in which the correct model is identified, for $\bs{\Sigma}_0 = \diag(r+1,\ldots,2,1\ldots,1) \in \mb{R}^{20 \times 20}$. This is the data of Table \ref{tab:PropLinearP20}. The figure repeats Figure \ref{fig:LinearEigenvalueComparisonPlot} of the main text.}
    \label{fig:PropLinearP20}
\end{figure}

\begin{table}[H]
\centering
\caption{Average absolute distance between the estimated and the true number of PCs for various model selection procedures. The true covariance matrix is the $20 \times 20$ matrix $\bs{\Sigma}_0 = \diag(5,\ldots,5,1\ldots,1)$. The sample size $n$ and true number of PCs appear in the rows and columns of the table.}
\label{tab:DistIsotropicP20}
\vspace{2mm}
{\small
\begin{tabular}{|r|r|rrrrrrrrrr|}
\hline
& & \multicolumn{10}{|c|}{True Number of PCs}
\\
  \hline
 & $n$ & 0 & 1 & 2 & 3 & 4 & 5 & 6 & 7 & 8 & 9 \\
  \hline
 & 30 & 0.23 & 0.23 & 0.26 & 0.28 & 0.35 & 0.48 & 0.78 & 1.11 & 1.76 & 2.65 \\
\textbf{sBIC} & 50 & 0.06 & 0.06 & 0.07 & 0.07 & 0.06 & 0.10 & 0.19 & 0.34 & 0.61 & 1.03 \\
 & 250 & 0.00 & 0.00 & 0.00 & 0.01 & 0.00 & 0.01 & 0.00 & 0.00 & 0.01 & 0.01 \\
 & 4000 & 0.00 & 0.00 & 0.00 & 0.00 & 0.00 & 0.00 & 0.00 & 0.00 & 0.00 & 0.00 \\
  \hline
 & 30 & 0.00 & 0.33 & 0.79 & 1.41 & 2.33 & 3.58 & 4.99 & 6.38 & 7.68 & 8.87 \\
\textbf{BIC} & 50 & 0.00 & 0.09 & 0.21 & 0.44 & 0.72 & 1.39 & 2.69 & 4.70 & 7.06 & 8.73 \\
 & 250 & 0.00 & 0.00 & 0.00 & 0.00 & 0.00 & 0.00 & 0.00 & 0.00 & 0.00 & 0.00 \\
 & 4000 & 0.00 & 0.00 & 0.00 & 0.00 & 0.00 & 0.00 & 0.00 & 0.00 & 0.00 & 0.00 \\
  \hline
 & 30 & 1.00 & 0.88 & 0.74 & 0.63 & 0.58 & 0.59 & 0.61 & 0.68 & 0.71 & 0.70 \\
\textbf{NG1} & 50 & 1.00 & 0.90 & 0.73 & 0.58 & 0.52 & 0.46 & 0.47 & 0.46 & 0.48 & 0.56 \\
 & 250 & 1.00 & 1.00 & 0.87 & 0.63 & 0.45 & 0.31 & 0.27 & 0.22 & 0.19 & 0.16 \\
 & 4000 & 1.00 & 1.00 & 1.00 & 0.94 & 0.33 & 0.04 & 0.00 & 0.00 & 0.00 & 0.00 \\
  \hline
 & 30 & 6.72 & 12.62 & 13.46 & 13.12 & 12.66 & 11.93 & 11.21 & 10.48 & 9.54 & 8.63 \\
\textbf{NG2} & 50 & 5.80 & 12.85 & 13.65 & 13.29 & 12.84 & 12.05 & 11.32 & 10.53 & 9.67 & 8.80 \\
 & 250 & 2.93 & 13.24 & 13.76 & 13.43 & 12.93 & 12.22 & 11.53 & 10.70 & 9.92 & 9.01 \\
 & 4000 & 0.75 & 13.26 & 13.84 & 13.51 & 12.96 & 12.12 & 11.48 & 10.85 & 9.98 & 9.00 \\
  \hline
 & 30 & 0.02 & 0.34 & 0.33 & 0.38 & 0.39 & 0.46 & 0.71 & 0.84 & 1.26 & 1.79 \\
\textbf{GCV} & 50 & 0.00 & 0.09 & 0.10 & 0.10 & 0.10 & 0.10 & 0.13 & 0.18 & 0.27 & 0.36 \\
 & 250 & 0.00 & 0.00 & 0.00 & 0.00 & 0.00 & 0.00 & 0.00 & 0.00 & 0.00 & 0.00 \\
 & 4000 & 0.00 & 0.00 & 0.00 & 0.00 & 0.00 & 0.00 & 0.00 & 0.00 & 0.00 & 0.00 \\
  \hline
\end{tabular}

\vspace{2mm}

\begin{tabular}{|r|r|rrrrrrrrrr|}
\hline
& & \multicolumn{10}{|c|}{True Number of PCs}
\\
  \hline
 & $n$ & 10 & 11 & 12 & 13 & 14 & 15 & 16 & 17 & 18 & 19 \\
  \hline
 & 30 & 3.74 & 5.33 & 7.69 & 9.93 & 11.84 & 13.53 & 14.90 & 16.30 & 17.56 & 18.66 \\
\textbf{sBIC} & 50 & 1.84 & 3.84 & 7.26 & 10.19 & 12.48 & 14.17 & 15.41 & 16.68 & 17.81 & 18.90 \\
 & 250 & 0.02 & 0.02 & 0.01 & 0.02 & 0.00 & 0.00 & 1.66 & 16.85 & 17.97 & 19.00 \\
 & 4000 & 0.00 & 0.00 & 0.00 & 0.00 & 0.00 & 0.00 & 0.00 & 0.00 & 0.01 & 0.00 \\
  \hline
 & 30 & 9.94 & 10.98 & 11.99 & 13.00 & 14.00 & 15.00 & 16.00 & 17.00 & 18.00 & 19.00 \\
\textbf{BIC} & 50 & 9.94 & 10.98 & 11.99 & 13.00 & 14.00 & 15.00 & 16.00 & 17.00 & 18.00 & 19.00 \\
 & 250 & 0.00 & 0.00 & 0.00 & 0.00 & 0.00 & 4.86 & 16.00 & 17.00 & 18.00 & 19.00 \\
 & 4000 & 0.00 & 0.00 & 0.00 & 0.00 & 0.00 & 0.00 & 0.00 & 0.00 & 0.00 & 0.00 \\
  \hline
 & 30 & 0.71 & 0.78 & 0.81 & 0.83 & 0.86 & 0.92 & 0.97 & 0.94 & 0.85 & 0.56 \\
\textbf{NG1} & 50 & 0.57 & 0.62 & 0.61 & 0.60 & 0.66 & 0.72 & 0.71 & 0.72 & 0.73 & 0.48 \\
 & 250 & 0.17 & 0.17 & 0.18 & 0.19 & 0.21 & 0.23 & 0.25 & 0.26 & 0.29 & 0.20 \\
 & 4000 & 0.00 & 0.00 & 0.00 & 0.00 & 0.00 & 0.00 & 0.00 & 0.00 & 0.00 & 0.00 \\
  \hline
 & 30 & 7.77 & 6.93 & 6.01 & 5.09 & 4.19 & 3.22 & 2.31 & 1.47 & 0.84 & 0.56 \\
\textbf{NG2} & 50 & 7.98 & 7.05 & 6.13 & 5.19 & 4.31 & 3.36 & 2.44 & 1.51 & 0.74 & 0.48 \\
 & 250 & 8.15 & 7.25 & 6.37 & 5.46 & 4.54 & 3.60 & 2.66 & 1.71 & 0.78 & 0.20 \\
 & 4000 & 8.02 & 7.07 & 6.23 & 5.43 & 4.69 & 3.86 & 2.96 & 1.99 & 1.00 & 0.00 \\
  \hline
 & 30 & 2.42 & 3.55 & 5.68 & 8.37 & 11.30 & 13.59 & 15.28 & 16.76 & 17.91 & 18.94 \\
\textbf{GCV} & 50 & 0.51 & 1.00 & 1.84 & 4.74 & 10.15 & 14.07 & 15.72 & 16.92 & 17.96 & 18.99 \\
 & 250 & 0.00 & 0.00 & 0.00 & 0.00 & 0.31 & 13.11 & 16.00 & 17.00 & 18.00 & 19.00 \\
 & 4000 & 0.00 & 0.00 & 0.00 & 0.00 & 0.00 & 9.13 & 16.00 & 17.00 & 18.00 & 19.00 \\
  \hline
\end{tabular}
}
\end{table}

\begin{figure}[H]
    \centering
\begin{tikzpicture}
\begin{groupplot}[
  group style={group size=2 by 2, horizontal sep=1.3cm, vertical sep=1.5cm,
               xlabels at=edge bottom, ylabels at=edge left},
  width=0.505\linewidth, height=0.40\linewidth,
  xmin=-0.6, xmax=19.6, ymin=-0.76, ymax=19.76,
  xtick={0,5,10,15,19}, ytick={0,5,10,15},
  yticklabel style={/pgf/number format/fixed, /pgf/number format/precision=0},
  label style={font=\small\sffamily\sansmath},
  tick label style={font=\footnotesize\sffamily\sansmath},
  title style={font=\small\sffamily\bfseries, yshift=-2pt},
  grid=major, grid style={line width=.2pt, draw=gray!22},
  legend style={font=\footnotesize\sffamily, draw=none, fill=none, legend columns=5,
                /tikz/every even column/.append style={column sep=8pt}},
  xlabel={True Number of PCs $r$}, ylabel={Average Distance},
]
\nextgroupplot[title={n = 30}, legend to name=leg:DistIsoP20]
\addplot[color=sbiccol, mark=*, mark size=1.1pt, thick, dashed] coordinates {(0,0.23) (1,0.23) (2,0.26) (3,0.28) (4,0.35) (5,0.48) (6,0.78) (7,1.11) (8,1.76) (9,2.65) (10,3.74) (11,5.33) (12,7.69) (13,9.93) (14,11.84) (15,13.53) (16,14.9) (17,16.3) (18,17.56) (19,18.66)};
\addplot[color=biccol, mark=square*, mark size=1.1pt, thick, dashed] coordinates {(0,0) (1,0.33) (2,0.79) (3,1.41) (4,2.33) (5,3.58) (6,4.99) (7,6.38) (8,7.68) (9,8.87) (10,9.94) (11,10.98) (12,11.99) (13,13) (14,14) (15,15) (16,16) (17,17) (18,18) (19,19)};
\addplot[color=ngonecol, mark=triangle*, mark size=1.1pt, thick, dashed] coordinates {(0,1) (1,0.88) (2,0.74) (3,0.63) (4,0.58) (5,0.59) (6,0.61) (7,0.68) (8,0.71) (9,0.7) (10,0.71) (11,0.78) (12,0.81) (13,0.83) (14,0.86) (15,0.92) (16,0.97) (17,0.94) (18,0.85) (19,0.56)};
\addplot[color=ngtwocol, mark=diamond*, mark size=1.1pt, thick, dashed] coordinates {(0,6.72) (1,12.62) (2,13.46) (3,13.12) (4,12.66) (5,11.93) (6,11.21) (7,10.48) (8,9.54) (9,8.63) (10,7.77) (11,6.93) (12,6.01) (13,5.09) (14,4.19) (15,3.22) (16,2.31) (17,1.47) (18,0.84) (19,0.56)};
\addplot[color=gcvcol, mark=pentagon*, mark size=1.1pt, thick, dashed] coordinates {(0,0.02) (1,0.34) (2,0.33) (3,0.38) (4,0.39) (5,0.46) (6,0.71) (7,0.84) (8,1.26) (9,1.79) (10,2.42) (11,3.55) (12,5.68) (13,8.37) (14,11.3) (15,13.59) (16,15.28) (17,16.76) (18,17.91) (19,18.94)};
\legend{sBIC,BIC,NG1,NG2,GCV}

\nextgroupplot[title={n = 50}]
\addplot[color=sbiccol, mark=*, mark size=1.1pt, thick, dashed] coordinates {(0,0.06) (1,0.06) (2,0.07) (3,0.07) (4,0.06) (5,0.1) (6,0.19) (7,0.34) (8,0.61) (9,1.03) (10,1.84) (11,3.84) (12,7.26) (13,10.19) (14,12.48) (15,14.17) (16,15.41) (17,16.68) (18,17.81) (19,18.9)};
\addplot[color=biccol, mark=square*, mark size=1.1pt, thick, dashed] coordinates {(0,0) (1,0.09) (2,0.21) (3,0.44) (4,0.72) (5,1.39) (6,2.69) (7,4.7) (8,7.06) (9,8.73) (10,9.94) (11,10.98) (12,11.99) (13,13) (14,14) (15,15) (16,16) (17,17) (18,18) (19,19)};
\addplot[color=ngonecol, mark=triangle*, mark size=1.1pt, thick, dashed] coordinates {(0,1) (1,0.9) (2,0.73) (3,0.58) (4,0.52) (5,0.46) (6,0.47) (7,0.46) (8,0.48) (9,0.56) (10,0.57) (11,0.62) (12,0.61) (13,0.6) (14,0.66) (15,0.72) (16,0.71) (17,0.72) (18,0.73) (19,0.48)};
\addplot[color=ngtwocol, mark=diamond*, mark size=1.1pt, thick, dashed] coordinates {(0,5.8) (1,12.85) (2,13.65) (3,13.29) (4,12.84) (5,12.05) (6,11.32) (7,10.53) (8,9.67) (9,8.8) (10,7.98) (11,7.05) (12,6.13) (13,5.19) (14,4.31) (15,3.36) (16,2.44) (17,1.51) (18,0.74) (19,0.48)};
\addplot[color=gcvcol, mark=pentagon*, mark size=1.1pt, thick, dashed] coordinates {(0,0) (1,0.09) (2,0.1) (3,0.1) (4,0.1) (5,0.1) (6,0.13) (7,0.18) (8,0.27) (9,0.36) (10,0.51) (11,1) (12,1.84) (13,4.74) (14,10.15) (15,14.07) (16,15.72) (17,16.92) (18,17.96) (19,18.99)};

\nextgroupplot[title={n = 250}]
\addplot[color=sbiccol, mark=*, mark size=1.1pt, thick, dashed] coordinates {(0,0) (1,0) (2,0) (3,0.01) (4,0) (5,0.01) (6,0) (7,0) (8,0.01) (9,0.01) (10,0.02) (11,0.02) (12,0.01) (13,0.02) (14,0) (15,0) (16,1.66) (17,16.85) (18,17.97) (19,19)};
\addplot[color=biccol, mark=square*, mark size=1.1pt, thick, dashed] coordinates {(0,0) (1,0) (2,0) (3,0) (4,0) (5,0) (6,0) (7,0) (8,0) (9,0) (10,0) (11,0) (12,0) (13,0) (14,0) (15,4.86) (16,16) (17,17) (18,18) (19,19)};
\addplot[color=ngonecol, mark=triangle*, mark size=1.1pt, thick, dashed] coordinates {(0,1) (1,1) (2,0.87) (3,0.63) (4,0.45) (5,0.31) (6,0.27) (7,0.22) (8,0.19) (9,0.16) (10,0.17) (11,0.17) (12,0.18) (13,0.19) (14,0.21) (15,0.23) (16,0.25) (17,0.26) (18,0.29) (19,0.2)};
\addplot[color=ngtwocol, mark=diamond*, mark size=1.1pt, thick, dashed] coordinates {(0,2.93) (1,13.24) (2,13.76) (3,13.43) (4,12.93) (5,12.22) (6,11.53) (7,10.7) (8,9.92) (9,9.01) (10,8.15) (11,7.25) (12,6.37) (13,5.46) (14,4.54) (15,3.6) (16,2.66) (17,1.71) (18,0.78) (19,0.2)};
\addplot[color=gcvcol, mark=pentagon*, mark size=1.1pt, thick, dashed] coordinates {(0,0) (1,0) (2,0) (3,0) (4,0) (5,0) (6,0) (7,0) (8,0) (9,0) (10,0) (11,0) (12,0) (13,0) (14,0.31) (15,13.11) (16,16) (17,17) (18,18) (19,19)};

\nextgroupplot[title={n = 4000}]
\addplot[color=sbiccol, mark=*, mark size=1.1pt, thick, dashed] coordinates {(0,0) (1,0) (2,0) (3,0) (4,0) (5,0) (6,0) (7,0) (8,0) (9,0) (10,0) (11,0) (12,0) (13,0) (14,0) (15,0) (16,0) (17,0) (18,0.01) (19,0)};
\addplot[color=biccol, mark=square*, mark size=1.1pt, thick, dashed] coordinates {(0,0) (1,0) (2,0) (3,0) (4,0) (5,0) (6,0) (7,0) (8,0) (9,0) (10,0) (11,0) (12,0) (13,0) (14,0) (15,0) (16,0) (17,0) (18,0) (19,0)};
\addplot[color=ngonecol, mark=triangle*, mark size=1.1pt, thick, dashed] coordinates {(0,1) (1,1) (2,1) (3,0.94) (4,0.33) (5,0.04) (6,0) (7,0) (8,0) (9,0) (10,0) (11,0) (12,0) (13,0) (14,0) (15,0) (16,0) (17,0) (18,0) (19,0)};
\addplot[color=ngtwocol, mark=diamond*, mark size=1.1pt, thick, dashed] coordinates {(0,0.75) (1,13.26) (2,13.84) (3,13.51) (4,12.96) (5,12.12) (6,11.48) (7,10.85) (8,9.98) (9,9) (10,8.02) (11,7.07) (12,6.23) (13,5.43) (14,4.69) (15,3.86) (16,2.96) (17,1.99) (18,1) (19,0)};
\addplot[color=gcvcol, mark=pentagon*, mark size=1.1pt, thick, dashed] coordinates {(0,0) (1,0) (2,0) (3,0) (4,0) (5,0) (6,0) (7,0) (8,0) (9,0) (10,0) (11,0) (12,0) (13,0) (14,0) (15,9.13) (16,16) (17,17) (18,18) (19,19)};
\end{groupplot}
\end{tikzpicture}

\vspace{1mm}
\ref*{leg:DistIsoP20}

    \caption{Average absolute distance between the estimated and the true number of PCs, for $\bs{\Sigma}_0 = \diag(5,\ldots,5,1\ldots,1) \in \mb{R}^{20 \times 20}$. This is the data of Table \ref{tab:DistIsotropicP20}.}
    \label{fig:DistIsotropicP20}
\end{figure}

\begin{table}[H]
\centering
\caption{Average absolute distance between the estimated and the true number of PCs for various model selection procedures. The true covariance matrix is the $20 \times 20$ matrix $\bs{\Sigma}_0 = \diag(r+1,\ldots,2,1\ldots,1)$. The sample size $n$ and true number of PCs appear in the rows and columns of the table.}
\label{tab:DistLinearP20}
\vspace{2mm}
{\small
\begin{tabular}{|r|r|rrrrrrrrrr|}
\hline
& & \multicolumn{10}{|c|}{True Number of PCs}
\\
  \hline
 & $n$ & 0 & 1 & 2 & 3 & 4 & 5 & 6 & 7 & 8 & 9 \\
  \hline
 & 30 & 0.24 & 0.71 & 1.00 & 1.12 & 1.31 & 1.51 & 1.65 & 1.78 & 2.05 & 2.30 \\
\textbf{sBIC} & 50 & 0.06 & 0.64 & 0.85 & 0.99 & 1.02 & 1.15 & 1.20 & 1.28 & 1.40 & 1.52 \\
 & 250 & 0.00 & 0.05 & 0.04 & 0.04 & 0.01 & 0.02 & 0.02 & 0.01 & 0.01 & 0.01 \\
 & 4000 & 0.00 & 0.00 & 0.00 & 0.00 & 0.00 & 0.00 & 0.00 & 0.00 & 0.00 & 0.00 \\
  \hline
 & 30 & 0.00 & 1.00 & 1.93 & 2.72 & 3.31 & 3.90 & 4.49 & 5.06 & 5.87 & 6.87 \\
\textbf{BIC} & 50 & 0.00 & 1.00 & 1.88 & 2.42 & 2.78 & 3.00 & 3.16 & 3.34 & 3.57 & 3.79 \\
 & 250 & 0.00 & 0.82 & 0.80 & 0.75 & 0.69 & 0.61 & 0.55 & 0.46 & 0.38 & 0.30 \\
 & 4000 & 0.00 & 0.00 & 0.00 & 0.00 & 0.00 & 0.00 & 0.00 & 0.00 & 0.00 & 0.00 \\
  \hline
 & 30 & 1.00 & 0.72 & 0.69 & 0.65 & 0.65 & 0.62 & 0.64 & 0.62 & 0.66 & 0.72 \\
\textbf{NG1} & 50 & 1.00 & 0.61 & 0.57 & 0.56 & 0.49 & 0.46 & 0.47 & 0.52 & 0.50 & 0.51 \\
 & 250 & 1.00 & 0.67 & 0.51 & 0.43 & 0.31 & 0.25 & 0.18 & 0.13 & 0.11 & 0.11 \\
 & 4000 & 1.00 & 0.95 & 0.52 & 0.24 & 0.03 & 0.00 & 0.00 & 0.00 & 0.00 & 0.00 \\
  \hline
 & 30 & 6.41 & 8.71 & 11.29 & 12.09 & 12.09 & 11.61 & 10.89 & 10.16 & 9.38 & 8.38 \\
\textbf{NG2} & 50 & 5.85 & 8.64 & 11.38 & 12.45 & 12.28 & 11.67 & 11.05 & 10.40 & 9.55 & 8.64 \\
 & 250 & 3.16 & 8.72 & 11.90 & 12.70 & 12.51 & 11.97 & 11.22 & 10.46 & 9.63 & 8.79 \\
 & 4000 & 0.86 & 8.51 & 11.90 & 12.70 & 12.49 & 11.96 & 11.10 & 10.43 & 9.79 & 8.96 \\
  \hline
 & 30 & 0.02 & 0.98 & 1.41 & 1.31 & 1.20 & 1.35 & 1.46 & 1.51 & 1.71 & 1.78 \\
\textbf{GCV} & 50 & 0.00 & 0.90 & 1.01 & 0.87 & 0.87 & 0.91 & 0.94 & 1.00 & 1.09 & 1.14 \\
 & 250 & 0.00 & 0.79 & 0.68 & 0.68 & 0.69 & 0.69 & 0.71 & 0.72 & 0.73 & 0.75 \\
 & 4000 & 0.00 & 0.88 & 0.89 & 0.91 & 0.92 & 0.92 & 0.93 & 0.95 & 0.97 & 0.99 \\
  \hline
\end{tabular}

\vspace{2mm}

\begin{tabular}{|r|r|rrrrrrrrrr|}
\hline
& & \multicolumn{10}{|c|}{True Number of PCs}
\\
  \hline
 & $n$ & 10 & 11 & 12 & 13 & 14 & 15 & 16 & 17 & 18 & 19 \\
  \hline
 & 30 & 2.66 & 3.10 & 3.65 & 4.36 & 5.67 & 7.51 & 9.67 & 12.06 & 14.26 & 16.02 \\
\textbf{sBIC} & 50 & 1.74 & 2.04 & 2.40 & 2.87 & 3.89 & 5.71 & 8.48 & 11.78 & 14.16 & 16.16 \\
 & 250 & 0.01 & 0.01 & 0.01 & 0.01 & 0.02 & 0.02 & 0.03 & 0.04 & 0.04 & 0.40 \\
 & 4000 & 0.00 & 0.00 & 0.00 & 0.00 & 0.00 & 0.00 & 0.00 & 0.01 & 0.01 & 0.00 \\
  \hline
 & 30 & 7.94 & 9.09 & 10.76 & 12.26 & 13.62 & 14.82 & 15.88 & 16.96 & 17.97 & 18.98 \\
\textbf{BIC} & 50 & 4.19 & 5.34 & 7.15 & 9.75 & 12.77 & 14.62 & 15.85 & 16.91 & 17.95 & 18.98 \\
 & 250 & 0.27 & 0.18 & 0.15 & 0.09 & 0.08 & 0.04 & 0.03 & 0.02 & 0.01 & 0.01 \\
 & 4000 & 0.00 & 0.00 & 0.00 & 0.00 & 0.00 & 0.00 & 0.00 & 0.00 & 0.00 & 0.00 \\
  \hline
 & 30 & 0.71 & 0.78 & 0.82 & 0.85 & 0.90 & 0.92 & 0.92 & 0.97 & 0.86 & 0.58 \\
\textbf{NG1} & 50 & 0.53 & 0.61 & 0.60 & 0.62 & 0.70 & 0.71 & 0.72 & 0.74 & 0.73 & 0.48 \\
 & 250 & 0.13 & 0.15 & 0.18 & 0.21 & 0.22 & 0.23 & 0.26 & 0.29 & 0.28 & 0.25 \\
 & 4000 & 0.00 & 0.00 & 0.00 & 0.00 & 0.00 & 0.00 & 0.00 & 0.00 & 0.00 & 0.00 \\
  \hline
 & 30 & 7.70 & 6.81 & 5.90 & 5.01 & 4.06 & 3.15 & 2.27 & 1.43 & 0.78 & 0.58 \\
\textbf{NG2} & 50 & 7.78 & 6.90 & 6.02 & 5.16 & 4.20 & 3.24 & 2.38 & 1.47 & 0.72 & 0.48 \\
 & 250 & 7.92 & 7.07 & 6.17 & 5.30 & 4.39 & 3.50 & 2.58 & 1.63 & 0.72 & 0.25 \\
 & 4000 & 8.00 & 7.00 & 6.03 & 5.08 & 4.27 & 3.51 & 2.77 & 1.93 & 0.98 & 0.00 \\
  \hline
 & 30 & 1.98 & 2.24 & 2.50 & 2.73 & 3.30 & 4.22 & 5.80 & 8.40 & 11.67 & 15.10 \\
\textbf{GCV} & 50 & 1.25 & 1.29 & 1.43 & 1.51 & 1.67 & 2.09 & 2.93 & 5.17 & 10.00 & 14.53 \\
 & 250 & 0.77 & 0.80 & 0.82 & 0.86 & 0.90 & 0.96 & 1.11 & 1.46 & 4.94 & 15.51 \\
 & 4000 & 0.98 & 0.99 & 0.99 & 1.00 & 1.00 & 1.00 & 1.00 & 1.01 & 2.31 & 18.74 \\
  \hline
\end{tabular}
}
\end{table}

\begin{figure}[H]
    \centering
\begin{tikzpicture}
\begin{groupplot}[
  group style={group size=2 by 2, horizontal sep=1.3cm, vertical sep=1.5cm,
               xlabels at=edge bottom, ylabels at=edge left},
  width=0.505\linewidth, height=0.40\linewidth,
  xmin=-0.6, xmax=19.6, ymin=-0.76, ymax=19.76,
  xtick={0,5,10,15,19}, ytick={0,5,10,15},
  yticklabel style={/pgf/number format/fixed, /pgf/number format/precision=0},
  label style={font=\small\sffamily\sansmath},
  tick label style={font=\footnotesize\sffamily\sansmath},
  title style={font=\small\sffamily\bfseries, yshift=-2pt},
  grid=major, grid style={line width=.2pt, draw=gray!22},
  legend style={font=\footnotesize\sffamily, draw=none, fill=none, legend columns=5,
                /tikz/every even column/.append style={column sep=8pt}},
  xlabel={True Number of PCs $r$}, ylabel={Average Distance},
]
\nextgroupplot[title={n = 30}, legend to name=leg:DistLinP20]
\addplot[color=sbiccol, mark=*, mark size=1.1pt, thick, dashed] coordinates {(0,0.24) (1,0.71) (2,1) (3,1.12) (4,1.31) (5,1.51) (6,1.65) (7,1.78) (8,2.05) (9,2.3) (10,2.66) (11,3.1) (12,3.65) (13,4.36) (14,5.67) (15,7.51) (16,9.67) (17,12.06) (18,14.26) (19,16.02)};
\addplot[color=biccol, mark=square*, mark size=1.1pt, thick, dashed] coordinates {(0,0) (1,1) (2,1.93) (3,2.72) (4,3.31) (5,3.9) (6,4.49) (7,5.06) (8,5.87) (9,6.87) (10,7.94) (11,9.09) (12,10.76) (13,12.26) (14,13.62) (15,14.82) (16,15.88) (17,16.96) (18,17.97) (19,18.98)};
\addplot[color=ngonecol, mark=triangle*, mark size=1.1pt, thick, dashed] coordinates {(0,1) (1,0.72) (2,0.69) (3,0.65) (4,0.65) (5,0.62) (6,0.64) (7,0.62) (8,0.66) (9,0.72) (10,0.71) (11,0.78) (12,0.82) (13,0.85) (14,0.9) (15,0.92) (16,0.92) (17,0.97) (18,0.86) (19,0.58)};
\addplot[color=ngtwocol, mark=diamond*, mark size=1.1pt, thick, dashed] coordinates {(0,6.41) (1,8.71) (2,11.29) (3,12.09) (4,12.09) (5,11.61) (6,10.89) (7,10.16) (8,9.38) (9,8.38) (10,7.7) (11,6.81) (12,5.9) (13,5.01) (14,4.06) (15,3.15) (16,2.27) (17,1.43) (18,0.78) (19,0.58)};
\addplot[color=gcvcol, mark=pentagon*, mark size=1.1pt, thick, dashed] coordinates {(0,0.02) (1,0.98) (2,1.41) (3,1.31) (4,1.2) (5,1.35) (6,1.46) (7,1.51) (8,1.71) (9,1.78) (10,1.98) (11,2.24) (12,2.5) (13,2.73) (14,3.3) (15,4.22) (16,5.8) (17,8.4) (18,11.67) (19,15.1)};
\legend{sBIC,BIC,NG1,NG2,GCV}

\nextgroupplot[title={n = 50}]
\addplot[color=sbiccol, mark=*, mark size=1.1pt, thick, dashed] coordinates {(0,0.06) (1,0.64) (2,0.85) (3,0.99) (4,1.02) (5,1.15) (6,1.2) (7,1.28) (8,1.4) (9,1.52) (10,1.74) (11,2.04) (12,2.4) (13,2.87) (14,3.89) (15,5.71) (16,8.48) (17,11.78) (18,14.16) (19,16.16)};
\addplot[color=biccol, mark=square*, mark size=1.1pt, thick, dashed] coordinates {(0,0) (1,1) (2,1.88) (3,2.42) (4,2.78) (5,3) (6,3.16) (7,3.34) (8,3.57) (9,3.79) (10,4.19) (11,5.34) (12,7.15) (13,9.75) (14,12.77) (15,14.62) (16,15.85) (17,16.91) (18,17.95) (19,18.98)};
\addplot[color=ngonecol, mark=triangle*, mark size=1.1pt, thick, dashed] coordinates {(0,1) (1,0.61) (2,0.57) (3,0.56) (4,0.49) (5,0.46) (6,0.47) (7,0.52) (8,0.5) (9,0.51) (10,0.53) (11,0.61) (12,0.6) (13,0.62) (14,0.7) (15,0.71) (16,0.72) (17,0.74) (18,0.73) (19,0.48)};
\addplot[color=ngtwocol, mark=diamond*, mark size=1.1pt, thick, dashed] coordinates {(0,5.85) (1,8.64) (2,11.38) (3,12.45) (4,12.28) (5,11.67) (6,11.05) (7,10.4) (8,9.55) (9,8.64) (10,7.78) (11,6.9) (12,6.02) (13,5.16) (14,4.2) (15,3.24) (16,2.38) (17,1.47) (18,0.72) (19,0.48)};
\addplot[color=gcvcol, mark=pentagon*, mark size=1.1pt, thick, dashed] coordinates {(0,0) (1,0.9) (2,1.01) (3,0.87) (4,0.87) (5,0.91) (6,0.94) (7,1) (8,1.09) (9,1.14) (10,1.25) (11,1.29) (12,1.43) (13,1.51) (14,1.67) (15,2.09) (16,2.93) (17,5.17) (18,10) (19,14.53)};

\nextgroupplot[title={n = 250}]
\addplot[color=sbiccol, mark=*, mark size=1.1pt, thick, dashed] coordinates {(0,0) (1,0.05) (2,0.04) (3,0.04) (4,0.01) (5,0.02) (6,0.02) (7,0.01) (8,0.01) (9,0.01) (10,0.01) (11,0.01) (12,0.01) (13,0.01) (14,0.02) (15,0.02) (16,0.03) (17,0.04) (18,0.04) (19,0.4)};
\addplot[color=biccol, mark=square*, mark size=1.1pt, thick, dashed] coordinates {(0,0) (1,0.82) (2,0.8) (3,0.75) (4,0.69) (5,0.61) (6,0.55) (7,0.46) (8,0.38) (9,0.3) (10,0.27) (11,0.18) (12,0.15) (13,0.09) (14,0.08) (15,0.04) (16,0.03) (17,0.02) (18,0.01) (19,0.01)};
\addplot[color=ngonecol, mark=triangle*, mark size=1.1pt, thick, dashed] coordinates {(0,1) (1,0.67) (2,0.51) (3,0.43) (4,0.31) (5,0.25) (6,0.18) (7,0.13) (8,0.11) (9,0.11) (10,0.13) (11,0.15) (12,0.18) (13,0.21) (14,0.22) (15,0.23) (16,0.26) (17,0.29) (18,0.28) (19,0.25)};
\addplot[color=ngtwocol, mark=diamond*, mark size=1.1pt, thick, dashed] coordinates {(0,3.16) (1,8.72) (2,11.9) (3,12.7) (4,12.51) (5,11.97) (6,11.22) (7,10.46) (8,9.63) (9,8.79) (10,7.92) (11,7.07) (12,6.17) (13,5.3) (14,4.39) (15,3.5) (16,2.58) (17,1.63) (18,0.72) (19,0.25)};
\addplot[color=gcvcol, mark=pentagon*, mark size=1.1pt, thick, dashed] coordinates {(0,0) (1,0.79) (2,0.68) (3,0.68) (4,0.69) (5,0.69) (6,0.71) (7,0.72) (8,0.73) (9,0.75) (10,0.77) (11,0.8) (12,0.82) (13,0.86) (14,0.9) (15,0.96) (16,1.11) (17,1.46) (18,4.94) (19,15.51)};

\nextgroupplot[title={n = 4000}]
\addplot[color=sbiccol, mark=*, mark size=1.1pt, thick, dashed] coordinates {(0,0) (1,0) (2,0) (3,0) (4,0) (5,0) (6,0) (7,0) (8,0) (9,0) (10,0) (11,0) (12,0) (13,0) (14,0) (15,0) (16,0) (17,0.01) (18,0.01) (19,0)};
\addplot[color=biccol, mark=square*, mark size=1.1pt, thick, dashed] coordinates {(0,0) (1,0) (2,0) (3,0) (4,0) (5,0) (6,0) (7,0) (8,0) (9,0) (10,0) (11,0) (12,0) (13,0) (14,0) (15,0) (16,0) (17,0) (18,0) (19,0)};
\addplot[color=ngonecol, mark=triangle*, mark size=1.1pt, thick, dashed] coordinates {(0,1) (1,0.95) (2,0.52) (3,0.24) (4,0.03) (5,0) (6,0) (7,0) (8,0) (9,0) (10,0) (11,0) (12,0) (13,0) (14,0) (15,0) (16,0) (17,0) (18,0) (19,0)};
\addplot[color=ngtwocol, mark=diamond*, mark size=1.1pt, thick, dashed] coordinates {(0,0.86) (1,8.51) (2,11.9) (3,12.7) (4,12.49) (5,11.96) (6,11.1) (7,10.43) (8,9.79) (9,8.96) (10,8) (11,7) (12,6.03) (13,5.08) (14,4.27) (15,3.51) (16,2.77) (17,1.93) (18,0.98) (19,0)};
\addplot[color=gcvcol, mark=pentagon*, mark size=1.1pt, thick, dashed] coordinates {(0,0) (1,0.88) (2,0.89) (3,0.91) (4,0.92) (5,0.92) (6,0.93) (7,0.95) (8,0.97) (9,0.99) (10,0.98) (11,0.99) (12,0.99) (13,1) (14,1) (15,1) (16,1) (17,1.01) (18,2.31) (19,18.74)};
\end{groupplot}
\end{tikzpicture}

\vspace{1mm}
\ref*{leg:DistLinP20}

    \caption{Average absolute distance between the estimated and the true number of PCs, for $\bs{\Sigma}_0 = \diag(r+1,\ldots,2,1\ldots,1) \in \mb{R}^{20 \times 20}$. This is the data of Table \ref{tab:DistLinearP20}.}
    \label{fig:DistLinearP20}
\end{figure}

\end{document}